\tikzstyle{edge} = [fill,opacity=.5,fill opacity=.5,line cap=round, line join=round, line width=50pt]
\theoremstyle{plain}
\theoremstyle{definition}
\newtheorem{theorem}{Theorem}[section]
\newtheorem{remark}[theorem]{Remark}
\newtheorem{lemma}[theorem]{Lemma}
\newtheorem{definition}[theorem]{Definition}
\newtheorem{example}[theorem]{Example}
\newtheorem{proposition}[theorem]{Proposition}
\newtheorem{corollary}[theorem]{Corollary}
\DeclareMathAlphabet{\mathpzc}{OT1}{pzc}{m}{it}
\newcommand{\s}{\sigma}
\newcommand{\supp}{\textnormal{\textsf{supp}}}
\newcommand{\mf}[1]{\mbox{$\mathfrak #1$}}
\newcommand{\rw}[1]{\left[{#1}\right]}
\newcommand{\run}{\textnormal{\textsf{run}}}
\newcommand{\addletter}{\curlywedge}
\newcommand{\optimalrank}{\textsf{ork}}
\newcommand{\shape}{\lambda}
\newcommand{\row}{\textsf{row}}
\newcommand{\RowBT}{\textsf{Row}}
\begin{document}

\title[Intersecting principal Bruhat ideals]{Intersecting principal Bruhat ideals\\
and grades of simple modules}

\date{}

\author[Volodymyr Mazorchuk]{Volodymyr Mazorchuk$^{\dagger}$}
\address{Department of Mathematics, Uppsala University, Uppsala, SWEDEN}
\email{mazor@math.uu.se}
\thanks{$^{\dagger}$Research partially supported by the Swedish Research Council and G{\"o}ran Gustafsson Stiftelse}

\author[Bridget Eileen Tenner]{Bridget Eileen Tenner$^*$}
\address{Department of Mathematical Sciences, DePaul University, Chicago, IL, USA}
\email{bridget@math.depaul.edu}
\thanks{$^*$Research partially supported by NSF Grant DMS-2054436 and Simons Foundation Collaboration Grant for Mathematicians 277603.}

\keywords{}%

\subjclass[2010]{Primary: 20F55; 
Secondary: 06A07, 
05E15
}

\begin{abstract}%
We prove that the grades of simple modules indexed by boolean permutations, 
over the incidence algebra of the symmetric group with respect to the Bruhat order, 
are given by Lusztig's $\mathbf{a}$-function. Our arguments are combinatorial, and include a description of the intersection of two principal order ideals when at least one permutation is boolean. An important object in our work is a reduced word written as minimally many runs of consecutive integers, and one step of our argument shows that this minimal quantity is equal to the length of the second row in the permutation's shape under the Robinson-Schensted correspondence. We also prove that 
a simple module over the above-mentioned incidence algebra is perfect if and only if its
index is the longest element of a parabolic subgroup.
\end{abstract}

\maketitle

\section{Introduction and description of results}\label{sec:introduction}

Homological invariants are very helpful tools for understanding both structure and 
properties of algebraic objects. The most common such invariants used in representation theory of finite
dimensional algebras are projective and injective dimensions that describe the lengths
of the minimal projective resolution and injective coresolution of a module, respectively.
A slightly less common such invariant is the {\em grade} of a module; that is, the 
minimal degree of a non-vanishing extension to a projective module. The latter invariant 
is important in the theory of Auslander regular algebras, see \cite{iyama_marczinzik}.

Incidence algebras of finite posets are important examples of finite dimensional algebras.
The main result of \cite{iyama_marczinzik} asserts that the incidence algebra of a 
finite lattice is Auslander regular if and only if the lattice is distributive. 

Finite Weyl groups play an important role in modern representation theory. They come 
equipped with a natural partial order called the {\em Bruhat order}. Unfortunately,
with the exception of a handful of degenerate cases, the Bruhat order on a Weyl group
is not a lattice. In March 2021, Rene Marczinzik gave a talk at Uppsala Algebra Seminar
in which he addressed the problem  of Auslander regularity of incidence algebras of
Weyl groups with respect to the Bruhat order (to the best of our knowledge, the problem
is still open for symmetric groups in the general case). In connection to this, he 
presented results of computer calculations of grades of simple modules over 
the incidence algebras of the symmetric group in small ranks. 
From these lists, one could observe that the grades of simple modules are often
(but not always) given by Lusztig's $\mathbf{a}$-function from \cite{lusztig}.
In the case of the symemtric group (i.e. in type $A$), 
Lusztig's $\mathbf{a}$-function is uniquely determined by the properties that it is 
constant on all two-sided Kazhdan-Lusztig cells and coincides with the usual length 
function on the longest elements in all parabolic subgroups. In several contexts,
see \cite{KMM,M1,M2}, this function describes homological invariants of algebraic
objects naturally indexed by the  elements of the symmetric group (or, more generally,
of a finite Weyl group). 

The main result of the present paper is the following theorem.

\begin{theorem}\label{thm-main-intro}
The grades of simple modules indexed by boolean permutations, over the incidence algebra 
of the symmetric group with respect to the Bruhat order, are given by Lusztig's 
$\mathbf{a}$-function.
\end{theorem}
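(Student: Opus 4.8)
The plan is to rewrite the grade as a homological invariant of the incidence algebra $A$, express it through the topology of lower Bruhat intervals and their pairwise intersections, and then extract Lusztig's $\mathbf{a}$ via the combinatorics of runs. Since $A=\bigoplus_{y}P_{y}$ as a left module, with $P_{y}$ the indecomposable projective at $y$ (supported, as a representation of the Bruhat poset, on the principal order ideal $[e,y]$), one has $\mathrm{grade}(L_{w})=\min\{j:\mathrm{Ext}^{j}_{A}(L_{w},P_{y})\neq 0\text{ for some }y\in S_{n}\}$. The structural input is that Bruhat intervals are thin and shellable (Bj\"orner--Wachs), so every open interval $(z,x)$ has the homotopy type of the sphere $S^{\ell(x)-\ell(z)-2}$; hence the minimal projective resolution of $L_{w}$ is linear, $Q^{j}=\bigoplus_{z\le w,\ \ell(z)=\ell(w)-j}P_{z}$ for $0\le j\le\ell(w)$, and applying $\mathrm{Hom}_{A}(-,P_{y})$ deletes exactly the summands $P_{z}$ with $z\not\le y$. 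Thus $\mathrm{Ext}^{j}_{A}(L_{w},P_{y})$ is computed by a subquotient complex supported on the intersection of principal ideals $[e,w]\cap[e,y]$. For $w$ boolean and $w\neq e$, fixing a reduced word $\mathbf{w}=s_{a_{1}}\cdots s_{a_{\ell}}$ (with distinct letters) and writing $z(T)=\prod_{i\notin T}s_{a_{i}}$ for $T\subseteq[\ell]$, this subquotient is a reduced simplicial cochain complex, giving
\begin{equation*}
\mathrm{Ext}^{j}_{A}(L_{w},P_{y})\ \cong\ \widetilde{H}^{\,j-2}\!\big(K(w,y)\big),\qquad
K(w,y)=\{T\subseteq[\ell]:z(T)\not\le y\},
\end{equation*}
an honest simplicial complex on $[\ell]$ because its non-faces form an up-set. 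Hence (with $w=e$ giving grade $0$ trivially) $\mathrm{grade}(L_{w})=2+\min_{y}\min\{m:\widetilde{H}^{m}(K(w,y))\neq0\}$.

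Now I would invoke the earlier description of $[e,w]\cap[e,y]$ for $w$ boolean: it pins down exactly which simplicial complexes occur as $K(w,y)$, so the double minimum runs over a controlled family. Write $r=r(w)$ for the least number of runs of consecutive integers needed to write a reduced word of $w$ as a concatenation $\mathbf{w}=R_{1}R_{2}\cdots R_{r}$. For the upper bound $\mathrm{grade}(L_{w})\le r$ I would produce an explicit $y$, built from a minimal run decomposition of $\mathbf{w}$, for which $K(w,y)$ has nontrivial reduced cohomology in degree $r-2$ — for instance, for a single run $R=(c,c+1,\dots,d)$ one takes $y$ with reduced word $(d,d-1,\dots,c)$, so that $K(w,y)=\{\varnothing\}$ — whence $\mathrm{Ext}^{r}_{A}(L_{w},P_{y})\neq0$. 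For the matching lower bound $\mathrm{grade}(L_{w})\ge r$, i.e.\ $\widetilde{H}^{m}(K(w,y))=0$ for every $y$ and every $m<r-2$, I would use the intersection description to analyze all $K(w,y)$, arguing by induction on the number of runs that no $y$ can make $[e,w]\cap[e,y]$ omit enough of $[e,w]$ to create low-degree cohomology. I expect this lower bound to be the main obstacle: it is the step that genuinely requires the precise shape of the intersection of two principal ideals and carries the heaviest run bookkeeping.

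Finally I would identify $r(w)$ with $\mathbf{a}(w)$ using the separate combinatorial step that the minimal number of runs in a reduced word of $w$ equals $\lambda_{2}$, the length of the second row of the Robinson--Schensted shape $\lambda$ of $w$. Since a boolean permutation avoids $321$, its RS shape has at most two rows, so Lusztig's $\mathbf{a}$-function in type $A$ — constant on two-sided cells, with value $\sum_{i\ge1}(i-1)\lambda_{i}$ on the cell of shape $\lambda$ — equals $\lambda_{2}$ on the cell of $w$. Hence $\mathrm{grade}(L_{w})=r(w)=\lambda_{2}=\mathbf{a}(w)$. As a sanity check, the smallest cases work out: $w=e$ gives grade $0$; $w=s_{i}s_{j}$ with $|i-j|=1$ gives grade $1$, witnessed by $y=s_{j}s_{i}$ (so $K(w,y)=\{\varnothing\}$); and $w=s_{i}s_{j}$ with $|i-j|\ge2$ gives grade $2$, witnessed by $y=e$ (so $K(w,y)=\{\varnothing,\{1\},\{2\}\}$), matching $\mathbf{a}$ each time.
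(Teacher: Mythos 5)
Your reduction of the grade to the reduced cohomology of the complexes $K(w,y)$ is sound, and it is essentially the paper's reduction in different clothing: instead of applying the Serre functor to the BGG resolution and restricting to a vertex (Lemma~\ref{lem.wm-1} and Subsection~\ref{sec:motivation.4}), you apply $\mathrm{Hom}_A(-,P_y)$ directly to the linear resolution; both land on the (co)chain complex supported on $B(w)\cap B(y)$, and for boolean $w$ this is indeed the reduced simplicial cochain complex you describe. The trouble is that neither of the two bounds that constitute the theorem is actually established. For the upper bound, your one concrete construction fails: for a single run $w=\rw{c(c+1)\cdots d}$ with $d-c\ge 2$, the choice $y=\rw{d(d-1)\cdots c}$ does \emph{not} give $K(w,y)=\{\varnothing\}$. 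Already for $w=\rw{123}$ and $y=\rw{321}$, the permutations $\rw{23}$ and $\rw{12}$ have unique reduced words which are not subwords of $321$, so they are not below $y$; hence $K(w,y)=\{\varnothing,\{1\},\{3\}\}$, whose first nonvanishing reduced cohomology is $\widetilde H^{0}$, witnessing only $\mathrm{Ext}^2\neq 0$ rather than $\mathrm{Ext}^1\neq 0$ (your sanity checks used runs of length at most two, which is why this went unnoticed). The partner that removes only the top element of $B(w)$ is the non-boolean permutation $\rw{(c+1)\cdots d\,c(c+1)\cdots(d-1)}$ of Lemma~\ref{lem:runs}, and for several runs one still has to glue these partners and verify, as in the matching argument of Theorem~\ref{thm:optimal rank and partner for boolean}, that the resulting intersection has its unique nonzero cohomology exactly in degree $r-2$; none of this is supplied.

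The matching lower bound, $\mathrm{grade}(L_w)\ge\run(w)$, i.e.\ $\widetilde H^{m}(K(w,y))=0$ for every $y$ and every $m<r-2$, you explicitly defer as ``the main obstacle'' --- and it is precisely where the paper's real work lies: Lemma~\ref{lem:deleting a letter and slimming} and Proposition~\ref{prop:bounding optimal rank} run an induction that matches $z\longleftrightarrow z\addletter\s_m$ for the largest common support letter $m$, identifies the unmatched elements as a filter isomorphic to $B(v')\cap B(w')$ for a shorter boolean $v'$, and controls $\run(v')\ge\run(v)-1$; no argument of comparable substance appears in your proposal, and it is not clear that a direct cohomological induction on $K(w,y)$ would be any easier. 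Finally, the identification $\run(w)=\lambda_2(w)$ is invoked as a ``separate combinatorial step'' rather than proved (it is Theorem~\ref{thm:2nd row counts runs} in the paper), while the translation $\mathbf a=\lambda_2$ for two-row shapes is fine. So the proposal is a correct strategic outline, with an attractive and more elementary derivation of the intersection complex, but with the single-run witness as stated being wrong and the two substantive bounds missing, it is not yet a proof.
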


We note that the original definition of the $\mathbf{a}$-function reflects some 
subtle numerical properties of the multiplication of the elements in the
Kazhdan-Lusztig bases of the Hecke algebra of a Coxeter group. We do not see 
any immediate connection between the Kazhdan-Lusztig bases of the Hecke algebra
and the incidence algebra  of the symmetric group with respect to the Bruhat order.
Therefore appearance of the $\mathbf{a}$-function in Theorem~\ref{thm-main-intro}
is rather mysterious.

Our proof of this result is combinatorial. Projective resolutions of simple modules
over the incidence algebras of Weyl groups can be constructed using the BGG
complex from \cite{BGG} (that is, the singular homology complex). We use the Serre functor
to relate the grade of the simple module $L_v$, where $v$ is a permutation, 
to the homology of the complex obtained by restricting the BGG complex to the intersection 
$B(v)\cap B(w)$ of two principal ideals in the symmetric group, where $w$ is an arbitrary permutation. 
For boolean $v$, we describe $B(v)\cap B(w)$ in
Proposition~\ref{thm-int-part1} with a more precise version in 
Corollary~\ref{cor:intersecting with boolean elements} under the additional assumption that
$w$ is also boolean. 

Using this explicit description, we proceed with combinatorial analysis of the restricted
BGG complex. In fact, we show that this restricted BGG complex is either exact or 
has exactly one non-zero homology which, moreover, is one-dimensional. For a fixed $v$,
the extreme degree in which such non-zero homology can appear is given by a combinatorial
invariant of $v$ that we call {\em the number of runs in $v$}, introduced in 
Subsection~\ref{sec:matchings.2}. The connection between the homology 
and the number of runs is established in Theorem~\ref{thm:optimal rank and partner for boolean}.

It is an easy combinatorial exercise to show that  the  number of runs for a boolean permutation
coincides with Lusztig's $\mathbf{a}$-function, and this is presented in Section~\ref{sec:tableaux}.
This is, essentially, what one needs to prove Theorem~\ref{thm-main-intro}.

The paper is organized as follows. In Section~\ref{sec:bruhat and boolean},
we collected some basics on the Bruhat order and boolean permutations. 
In Section~\ref{sec:intersections}, we study combinatorics of intersections
$B(v)\cap B(w)$, for boolean $v$, and show how they can be determined either from reduced words or from the permutations' one-line notations. In Section~\ref{sec:motivation}, we describe
in detail the algebraic motivation and setup of the problem we study.
Section~\ref{sec:matchings} contains combinatorial analysis of the homology
of the BGG complex restricted to $B(v)\cap B(w)$. The crucial results of that section give a method for deleting letters from a reduced word without losing the boolean elements in an order ideal, and description of a permutation $w := w(v)$ for which the intersection complex has the desired homology. Finally, in 
Section~\ref{sec:tableaux} we combine all of the pieces necessary for the proof of
Theorem~\ref{thm-main-intro}. In the last section of the paper we briefly address what little 
is known for non-boolean permutations. In particular, we show that Lusztig's $\mathbf{a}$-function 
gives  the grade of the simple module indexed by the longest elements of a parabolic subgroup.
From this we deduce that a simple module is perfect (in the sense that its grade coincides with
its projective dimension) if and only if the index of this module
is the longest element of a parabolic subgroup.

\section{Bruhat order and boolean elements}\label{sec:bruhat and boolean}

The symmetric group $\mf{S}_n$ of permutations of $[1,n]:=\{1,2,\dots,n\}$ 
is a Coxeter group with the natural distinguished set of Coxeter
generators given by the simple reflections $\{\s_i:=(i,i+1) : i \in [1,n-1]\}$. 
A \emph{reduced decomposition} of a permutation $w$ is a product 
$w = \s_{i_1} \cdots \s_{i_{\ell}}$ such that $\ell$ is minimal
(in which case it is called the {\em length} of $w$). 
To save notation, we can equivalently consider \emph{reduced words} of 
a permutation by looking only at the subscripts in a reduced decomposition. 
In this paper, we will let $R(w)$ denote the set of reduced words of 
a permutation $w$. Because both permutations and reduced words can be 
represented by strings of integers, we will write $\rw{s}$ to indicate 
that a string $s$ represents a reduced word. We think of permutations as maps, and we compose maps from right to left.

\begin{example}
For the permutation with one-line notation $4132 \in \mf{S}_4$, we have
$$R(4132) = \{\rw{3213}, \rw{3231}, \rw{2321}\}.$$
\end{example}

Reduced words represent products of simple reflections, so we can use them interchangeably with the permutations they represent. For example, we can write
$$4132 = \rw{3213} = \rw{3231} = \rw{2321}.$$
It is well-known that any two reduced words for a given permutation are related by a sequence of commutation and braid moves \cite{matsumoto, tits}, as we can see in the previous example.

The Bruhat order gives a poset structure to the symmetric group, and it can be defined in terms of reduced words.

\begin{theorem}[{\cite[Theorem 2.2.2]{bjorner brenti}}]\label{thm:subword property}
Let $u,w$ be permutations, and $\rw{s} \in R(w)$. Then $u \le w$ in the Bruhat order if and only if a subword of $\rw{s}$ is a reduced word for $u$.
\end{theorem}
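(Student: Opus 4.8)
The plan is to establish both implications by induction, relying on the standard structure theory of Coxeter groups --- the Exchange Condition, the Strong Exchange Condition, and the Lifting Property, all of which are available from \cite{bjorner brenti}. I will use the description of the Bruhat order via its covering relations: $u \lessdot w$ means $w = u\cdot(a\,b)$ for some transposition $(a\,b)$ with $\ell(w) = \ell(u)+1$, and $u \le w$ means that there is a chain $u = u_0 \lessdot u_1 \lessdot \cdots \lessdot u_k = w$. Having fixed $\rw{s} \in R(w)$, I write $w = \s_{a_1}\cdots\s_{a_r}$ for the corresponding reduced decomposition, so that $r = \ell(w)$ and subwords of $\rw{s}$ correspond to subsequences of the positions $1, \dots, r$.

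For the forward implication I would assume $u \le w$ and induct on the length $k$ of a chain witnessing it. The case $k = 0$ is immediate, since then $u = w$ and $\rw{s}$ itself is the desired subword. For $k \ge 1$, I first treat the top cover $u_{k-1} \lessdot w$: writing $w = u_{k-1}(a\,b)$ gives $u_{k-1} = w(a\,b)$ with $\ell(w(a\,b)) = \ell(w) - 1 < \ell(w)$, so the Strong Exchange Condition applied to $w = \s_{a_1}\cdots\s_{a_r}$ produces an index $p$ with $u_{k-1} = \s_{a_1}\cdots\widehat{\s_{a_p}}\cdots\s_{a_r}$; this expression has length $r - 1 = \ell(u_{k-1})$, hence is reduced, so deleting the letter in position $p$ from $\rw{s}$ yields a reduced word for $u_{k-1}$ that is a subword of $\rw{s}$. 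Applying the inductive hypothesis to the chain $u = u_0 \lessdot \cdots \lessdot u_{k-1}$ of length $k - 1$, with this reduced word for $u_{k-1}$ in place of $\rw{s}$, gives a subword of it --- hence a subword of $\rw{s}$ --- that is a reduced word for $u$.

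For the reverse implication I would assume that some subword $\rw{b}$ of $\rw{s}$ is a reduced word for $u$ and induct on $r = \ell(w)$. The case $r = 0$ is trivial. Otherwise put $w' = \s_{a_1}\cdots\s_{a_{r-1}}$, so that the length-$(r-1)$ prefix of $\rw{s}$ lies in $R(w')$ and $w' \lessdot w$ via the adjacent transposition $\s_{a_r}$. If $\rw{b}$ does not use the last letter of $\rw{s}$, then $\rw{b}$ is a subword of that prefix, so $u \le w'$ by induction, and therefore $u \le w' \lessdot w$. If $\rw{b}$ does use the last letter, write $u = u'\s_{a_r}$ with $u'$ the element represented by $\rw{b}$ with its final letter removed; then $\rw{b}$ with its final letter removed is a reduced word for $u'$ that is a subword of the length-$(r-1)$ prefix, so $u' \le w'$ by induction, while $\ell(u) = \ell(u') + 1$ forces $u'\s_{a_r} > u'$ and $w'\s_{a_r} = w > w'$. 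The Lifting Property --- if $x \le y$ and $s$ is a simple reflection with $xs > x$ and $ys > y$, then $xs \le ys$ --- then yields $u = u'\s_{a_r} \le w'\s_{a_r} = w$, completing the induction.

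I expect the main obstacle to be the use of the Lifting Property in the reverse direction (and, secondarily, of the Strong Exchange Condition in the forward direction): in a genuinely self-contained account each of these has to be proved by its own induction on length, starting from the ordinary Exchange Condition, and it is precisely the step ``$\rw{b}$ uses the last letter'' that is not purely formal. A minor remark worth recording --- although the induction above is organized so as not to need it --- is that ``some subword of $\rw{s}$ is a reduced word for $u$'' is equivalent to ``the product of the simple reflections along some subword of $\rw{s}$ equals $u$'', since the Deletion Condition lets one excise canceling pairs of letters from a non-reduced subword without leaving $\rw{s}$ or changing the element represented.
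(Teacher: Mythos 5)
This statement is quoted by the paper directly from \cite[Theorem 2.2.2]{bjorner brenti} and is not proved there, so there is no internal argument to compare against; your proof is the standard one for the subword property (Strong Exchange for the forward implication, induction on $\ell(w)$ plus a lifting argument for the converse), and it is correct in outline. Two caveats are worth making explicit. First, as you yourself flag, you cannot simply cite the Lifting Property from \cite{bjorner brenti}: there it is Proposition 2.2.7 and is \emph{deduced from} the Subword Property, so your reverse direction would be circular unless the statement ``$x \le y$, $xs > x$, $ys > y$ implies $xs \le ys$'' is proved independently from the Strong Exchange Condition (this is Deodhar's Property Z; self-contained proofs by induction on length exist, e.g.\ in Humphreys' book), which makes the gap fillable but real if one insists on a single-source derivation. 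Second, your forward direction takes $u\le w$ to be witnessed by a chain of covers in which lengths increase by exactly one; under the definition of Bruhat order as the transitive closure of length-increasing multiplication by (not necessarily simple, not necessarily covering) reflections, that chain property is itself usually obtained as a consequence of the subword property. The repair stays inside your toolkit: apply Strong Exchange along an arbitrary length-increasing reflection chain, and when the one-letter-deleted expression for an intermediate element is not reduced, invoke the Deletion Condition (which you already mention in your closing remark) to extract from it a reduced subword for the same element before continuing the induction. With those two adjustments your argument is a complete and standard proof, essentially the one in the cited reference.
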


Various structural aspects of this poset have been studied, related to its principal order ideals and intervals (see, for example, \cite{bjorner brenti, bjorner ekedahl, dyer, hultman 1, hultman 2, tenner patt bru, tenner interval factors, tenner bruhat intervals}). Despite this interest and literature, there has been very little attention paid to the intersection of principal order ideals in this poset.

\begin{definition}
For a permutation $w \in \mf{S}_n$, write $B(w)$ for the principal order ideal of $w$ in the Bruhat order. \end{definition}

As studied by Ragnarsson and the second author \cite{ragnarsson tenner 1, ragnarsson tenner 2}, and Hultman and Vorwerk in the case of involutions \cite{hultman vorwerk}, the so-called \emph{boolean} elements of the symmetric group have particularly interesting properties.

\begin{definition}
A permutation $v$ is \emph{boolean} if its principal order ideal $B(v)$ is isomorphic to a boolean algebra.
\end{definition}

Although boolean elements can be defined analogously for any Coxeter group, we are focused on permutations in this work. As shown in \cite{tenner patt bru}, boolean permutations can be characterized in several ways.

\begin{theorem}\label{thm:boolean characterization}
The following statements are equivalent:
\begin{itemize}
\item the permutation $v$ is boolean,
\item the permutation $v$ avoids the patterns $321$ and $3412$, and 
\item reduced words for the permutation $v$ contain no repeated letters.
\end{itemize}
\end{theorem}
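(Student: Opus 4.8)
The plan is to prove the cycle of implications, ``no repeated letters'' $\Rightarrow$ ``$B(v)$ boolean'' $\Rightarrow$ ``$v$ avoids $321$ and $3412$'' $\Rightarrow$ ``no repeated letters'', leaning throughout on the subword property (Theorem~\ref{thm:subword property}) and on the word property (any two reduced words of $v$ are connected by commutation and braid moves). Two preliminary observations come first. Since a braid move can only be applied to a word that already contains a repeated letter, the condition ``some (equivalently, every) element of $R(v)$ has a repeated letter'' is an invariant of $v$. Likewise the \emph{support} of $v$, the set of letters occurring in its reduced words, equals $\{i : \s_i \le v\}$, so it is well defined and monotone: $u \le v$ implies $\supp(u) \subseteq \supp(v)$.

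For ``no repeated letters $\Rightarrow$ boolean'', fix a repeat-free $\rw{s} = [s_1 \cdots s_\ell] \in R(v)$. Every subword of $\rw{s}$ is again repeat-free, hence reduced: a repeat-free word admits only commutation moves (both braid and cancellation moves need a repeated letter), so it cannot be shortened. Therefore the map $\Phi$ sending a subset $T \subseteq \{1, \dots, \ell\}$ to the product of the subword of $\rw{s}$ on the positions in $T$ is a well-defined map into $B(v)$; it is onto $B(v)$ by the subword property, and order-preserving for the same reason. Injectivity and order-reflection follow from the computation $\supp(\Phi(T)) = \{s_j : j \in T\}$, which has size $|T|$ because $\rw{s}$ is repeat-free, combined with monotonicity of support. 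Hence $\Phi$ is an isomorphism from the boolean lattice $2^{\{1, \dots, \ell\}}$ onto $B(v)$, as desired.

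For ``boolean $\Rightarrow$ avoids $321$ and $3412$'', I argue the contrapositive. Suppose $v$ contains $321$ or $3412$. Localizing such a pattern occurrence to three (respectively four) consecutive values and using the sorted-prefix (tableau) criterion for the Bruhat order, one shows $v \ge \s_a\s_{a+1}\s_a$ for some $a$ (for the $3412$ case one first checks that the ``four consecutive values'' element itself lies above a braid element). But $B(\s_a\s_{a+1}\s_a)$ is isomorphic to the Bruhat order on $\mf{S}_3$, which has six elements and so is not boolean, and every principal order ideal inside a boolean lattice is again boolean; hence $B(v)$ is not boolean. For the final implication, ``avoids $321$ and $3412$ $\Rightarrow$ no repeated letters'', again take the contrapositive: if $\rw{s} \in R(v)$ has a repeated letter, choose two occurrences of some letter $a$ with no $a$ strictly between them. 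Reducedness of that factor forces $a-1$ or $a+1$ to appear strictly between the two $a$'s (otherwise they could be commuted together). Sliding the two $a$'s inward by commutations exposes a consecutive factor of a reduced word of $v$ that is a reduced word of a pattern-$321$ element (if only one of $a\pm1$ gets caught) or of a pattern-$3412$ element (if both do); reading off the wiring diagram, a consecutive factor equal to a reduced word of a permutation $p$ forces $v$ to contain $p$, so $v$ contains $321$ or $3412$.

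The main obstacle is precisely this passage between ``$v$ contains $321$ or $3412$'' and ``a reduced word of $v$ has a repeated letter'' (the last two implications). It is delicate because the Bruhat order does not respect pattern containment in general — for instance $3412 \ge \s_2\s_3\s_2$ even though $3412$ avoids $321$ — so one must localize patterns and extract braids carefully, keeping the $321$ case and the $3412$ case genuinely separate rather than treating the two patterns uniformly, and the structural facts about reduced words of these small elements (see \cite{tenner patt bru, ragnarsson tenner 1}) are used at exactly this point. Everything else is routine manipulation via the subword and word properties.
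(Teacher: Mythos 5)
The paper itself does not prove this statement: it is quoted from \cite{tenner patt bru}, so your argument has to stand on its own. Your first implication (repeat-free reduced word $\Rightarrow$ boolean) is correct and complete: subwords of a repeat-free word are repeat-free, hence reduced, and the support computation makes the subset-to-subword map a poset isomorphism onto $B(v)$.

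The genuine gap is in your last implication, exactly at the point you flag as delicate. The bridging principle you invoke, that a consecutive factor of a reduced word of $v$ equal to a reduced word of $p$ forces $v$ to contain the pattern $p$, is false: $v=\s_2\s_1\s_3=3142$ has reduced word $\rw{213}$, whose consecutive factor $\rw{13}$ is a reduced word of $2143$, yet $3142$ does not contain $2143$. It also fails in the restricted form your case analysis needs. If the trapped factor is a braid word $\rw{a(a+1)a}$, the only soft conclusion available is $v\ge \s_a\s_{a+1}\s_a$, which does not imply that $v$ contains $321$ (e.g.\ $3412=\rw{2132}\ge\rw{212}=\s_2\s_1\s_2$ while $3412$ avoids $321$); and if the trapped factor is a (shifted) reduced word of $3412$, the conclusion ``$v$ contains $3412$'' can fail outright: $\rw{12132}$ is reduced and equals $3421$, which contains no $3412$ pattern. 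In each case what one can actually conclude is only ``$v$ contains $321$ or $3412$,'' and proving that from a repeated letter (equivalently from $v\ge\s_a\s_{a\pm1}\s_a$) is precisely the nontrivial content of this direction -- deferring it to ``reading off the wiring diagram'' or to \cite{tenner patt bru} (where the whole theorem is proved) leaves the core of the argument missing. Moreover, the sliding step itself is flawed: commuting the two $a$'s inward need not confine the trapped letters to $\{a-1,a,a+1\}$ (in $\rw{12321}$ the letter $3$ is pinned between the two $2$'s), so your dichotomy ``one of $a\pm1$ caught / both caught'' does not exhaust the possibilities.

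A secondary soft spot of the same flavor occurs in your second implication: a $321$ occurrence cannot always be ``localized to three consecutive values'' ($35142$ contains $321$ only via the values $5,4,2$), so the step as described does not go through literally. The conclusion you want there, $v\ge\s_a\s_{a+1}\s_a$ whenever $v$ contains $321$ or $3412$, is nevertheless true and can be reached via the rank (dot) criterion or via the Billey--Jockusch--Stanley fact that $321$-avoidance is equivalent to full commutativity; once you have it, your boolean-ideal argument correctly yields that $B(v)$ is not boolean. The repair needed is thus concentrated in the third implication: a genuine construction of a $321$ or $3412$ occurrence in the one-line notation starting from a repeated letter, which is where the proof in \cite{tenner patt bru} does its real work.
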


In this work we will consider intersections of principal order ideals $B(v) \cap B(w)$, when $v$ is  boolean. First, we will describe the elements of this intersection, and then we will look at its topology.

\section{Intersection ideals}\label{sec:intersections}

\subsection{Orientation and intersecting ideals}\label{sec:intersections.1}

Throughout this section, let $v \in \mf{S}_n$ be a boolean permutation.

\begin{definition}
The \emph{support} of a permutation $w$ is the set of distinct letters appearing in its reduced words. This will be denoted $\supp(w)$.
\end{definition}

Thus a permutation is boolean if and only if its length is equal to the size of its support. For an arbitrary permutation $w$, we can make the following observation about how $\supp(w)$ might impact the poset $B(w)$.

Let $x$ be a word and $I$ a subset of letters appearing in $x$. 
We will denote by $x_I$ the subword of $x$ consisting of all letters from $I$. 

\begin{lemma}\label{lem:commuting generators make product poi}
Suppose that $w$ is a permutation with $\supp(w) = X \sqcup Y$, such that either
\begin{itemize}
\item $x$ and $y$ commute for all $x \in X$ and $y \in Y$, in which case let $\rw{s} \in R(w)$ be any reduced word; or
\item there is exactly one pair of noncommuting letters $(x_0,y_0) \in X \times Y$, and there exists $\rw{s} \in R(w)$ in which all appearances of $x_0$ are to the left of all appearances of $y_0$.
\end{itemize}
Then both $\rw{s_X}$ and $\rw{s_Y}$ are reduced words, and $B(w)$ can be written as a direct product:
$$B(w) \cong B(\rw{s_X}) \times B(\rw{s_Y}).$$
\end{lemma}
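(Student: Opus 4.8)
The plan is to work directly with the subword characterization of the Bruhat order (Theorem~\ref{thm:subword property}) and build an explicit poset isomorphism. First I would establish that $\rw{s_X}$ and $\rw{s_Y}$ are reduced. In the first case, since every letter of $X$ commutes with every letter of $Y$, the word $\rw{s}$ can be rearranged by commutation moves into the concatenation $\rw{s_X}\,\rw{s_Y}$ (up to reordering within each block), so both subwords are reduced because a reduced word stays reduced under commutation moves and under taking this kind of ``block'' factorization; more carefully, $\ell(w) = |\supp(w)| = |X| + |Y|$ and each of $\rw{s_X}$, $\rw{s_Y}$ uses $|X|$ (resp.\ $|Y|$) distinct letters with no repeats, hence is reduced by the length count. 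In the second case, the hypothesis that all occurrences of $x_0$ precede all occurrences of $y_0$ in $\rw{s}$ means that for any pair $(x,y) \in X \times Y$ other than $(x_0,y_0)$ the letters commute, so again $\rw{s}$ can be shuffled into $\rw{s_X}\,\rw{s_Y}$ while keeping $x_0$ left of $y_0$; this rearrangement is a sequence of commutation moves, so it produces a reduced word, and thus $\rw{s_X}$ and $\rw{s_Y}$ are both reduced.

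Next, I would define the candidate isomorphism $\Phi\colon B(\rw{s_X}) \times B(\rw{s_Y}) \to B(w)$. Since $\rw{s_X}\,\rw{s_Y} \in R(w)$ (shown above), given $u_1 \le \rw{s_X}$ and $u_2 \le \rw{s_Y}$ we have reduced subwords of $\rw{s_X}$ and $\rw{s_Y}$ representing them; concatenating these subwords gives a subword of $\rw{s_X}\,\rw{s_Y}$, and I would check that this concatenation is itself reduced (again by a length/distinct-letter count, using that $\supp(u_1) \subseteq X$ and $\supp(u_2) \subseteq Y$ are disjoint and, in case two, the only possible braid obstruction $x_0,y_0$ occurs in the correct order). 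This yields a well-defined element $\Phi(u_1,u_2) := u_1 u_2 \in B(w)$. The map is clearly order-preserving in each coordinate. For injectivity, I would note that $\supp(u_1 u_2) \cap X$ and $\supp(u_1 u_2) \cap Y$ recover the supports of $u_1$ and $u_2$, and then argue that $u_1 u_2$ together with the ordering ``$X$ before $Y$'' determines $u_1$ and $u_2$ uniquely (the $X$-part and $Y$-part of any reduced word for $u_1 u_2$ of the required form are reduced words for $u_1$ and $u_2$ respectively). For surjectivity and the fact that $\Phi^{-1}$ is order-preserving, I would take any $z \le w$, pick (by Theorem~\ref{thm:subword property}) a reduced subword $\rw{t}$ of $\rw{s_X}\,\rw{s_Y}$, split $\rw{t} = \rw{t_X}\,\rw{t_Y}$ along the block boundary, and set $u_1 := \rw{t_X}$, $u_2 := \rw{t_Y}$; these are reduced (sub-subwords of reduced words, with disjoint supports), satisfy $u_1 \le \rw{s_X}$ and $u_2 \le \rw{s_Y}$, and $\Phi(u_1,u_2) = z$. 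Comparability in $B(w)$ then transfers coordinatewise: if $\Phi(u_1,u_2) \le \Phi(u_1',u_2')$, intersecting a witnessing subword with $X$ and with $Y$ shows $u_1 \le u_1'$ and $u_2 \le u_2'$.

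The main obstacle I anticipate is the second bullet's asymmetric hypothesis: one must be careful that the single noncommuting pair $(x_0,y_0)$ never forces a braid relation that would make a concatenated subword non-reduced or would fail to be straightenable into $\rw{s_X}\,\rw{s_Y}$. The key point to nail down is that ``all $x_0$'s left of all $y_0$'s in some reduced word'' is exactly the condition preventing any $\s_{x_0}\s_{y_0}\s_{x_0}$ (or longer) alternating pattern from appearing, so that commutation moves alone suffice to reach the block form and no length is lost; once that straightening lemma is secure, the rest is the bookkeeping of splitting subwords along the block boundary and counting distinct letters. I would isolate this straightening step as the technical heart of the argument and prove it by induction on the number of ``inversions'' between $X$-letters and $Y$-letters in $\rw{s}$, using that any adjacent $yx$ with $x \in X$, $y \in Y$ and $(x,y) \neq (x_0,y_0)$ can be swapped, while the ordering hypothesis guarantees we never get stuck with an adjacent $y_0 x_0$.
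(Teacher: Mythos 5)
Your proposal is correct, and its core is the same as the paper's one-sentence proof: straighten $\rw{s}$ into $\rw{s_Xs_Y}$ by commutation moves (your inversion-counting induction, together with the observation that the allowed swaps never create an adjacent $y_0x_0$ because they preserve the relative order of the $x_0$'s and $y_0$'s, is exactly the content of that sentence), after which the product decomposition of $B(w)$ follows as you outline. One caution: discard the ``more carefully'' justification via $\ell(w)=|\supp(w)|$ and distinct-letter counts. The lemma does not assume $w$ is boolean; for instance $w=\rw{12134}$ with $X=\{1,2\}$, $Y=\{3,4\}$ satisfies the second bullet yet has $\ell(w)=5>|\supp(w)|=4$, so $\rw{s_X}=\rw{121}$ has repeated letters. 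The valid argument is the one you state first: commutations preserve the element and its length, and prefixes and suffixes of reduced words are reduced. For the same reason, where your plan invokes a ``length/distinct-letter count'' to see that a concatenation $t_1t_2$ of reduced words with $\supp(\rw{t_1})\subseteq X$ and $\supp(\rw{t_2})\subseteq Y$ is reduced, and that the factorization into an $X$-part and a $Y$-part is unique, replace it by the standard parabolic-coset argument: every right descent of $\rw{t_1}$ lies in its support, hence in $X$, so $\rw{t_1}$ is the minimal-length representative of its coset modulo the parabolic subgroup generated by $Y$, giving $\ell(\rw{t_1}\rw{t_2})=\ell(\rw{t_1})+\ell(\rw{t_2})$ and the uniqueness needed for your injectivity step and for deducing $u_1\le u_1'$ and $u_2\le u_2'$ from $\Phi(u_1,u_2)\le\Phi(u_1',u_2')$. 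With those substitutions the plan goes through for arbitrary $w$ as in the statement.
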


\begin{proof}
The $\rw{s} \in B(w)$ described in the statement of the lemma can be transformed via commutations into $\rw{s_Xs_Y} \in B(w)$. The result follows.
\end{proof}

Theorem~\ref{thm:boolean characterization} means that Lemma~\ref{lem:commuting generators make product poi} can be applied to any boolean permutation.

The goal of this section is to describe $B(v) \cap B(w)$ for arbitrary permutations $w$. Because $v$ is boolean, the ideal $B(v)$ is determined by:
\begin{itemize}
\item the support of $v$,
\item the pairs of noncommuting (i.e., consecutive) letters appearing in the support, and 
\item the order in which noncommuting letters appear in elements of $R(v)$.
\end{itemize}
Note that this last item is well-defined because all letters are distinct. Therefore, if $k$ appears to the left of $k+1$ in one element of $R(v)$, then in fact $k$ appears to the left of $k+1$ in all elements of $R(v)$.

When looking at the principal order ideal of a boolean permutation $v$, we might want to consider the permutations covering an element $u$ in that ideal. By Theorems~\ref{thm:subword property} and~\ref{thm:boolean characterization} we can think of such an element as being obtained from some $\rw{s} \in R(u)$ by inserting a letter $k$ in such a way as to be consistent with elements of $R(v)$. The lack of repeated letters among those elements means that there is no ambiguity about how to insert $k$. When such an element exists, we will write it as
$$u \addletter \s_k.$$

The support of a permutation can be read off from any of its reduced words. It can also be detected from the one-line notation of the permutation, as described in the following lemma.

\begin{lemma}[{cf.~\cite[Lemma 2.8]{tenner rep patt}}]\label{lem:support detection}
For any $w \in \mf{S}_n$, the following statements are equivalent:
\begin{itemize}
\item $k \in \supp(w)$,
\item $\{w(1),\ldots,w(k)\} \neq \{1,\ldots, k\}$, and
\item $\{w(k+1),\ldots, w(n)\} \neq \{k+1,\ldots, n\}$.
\end{itemize}
\end{lemma}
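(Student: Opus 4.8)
The statement to prove is Lemma~\ref{lem:support detection}, which characterizes membership $k \in \supp(w)$ in three equivalent ways: via reduced words, via the condition $\{w(1),\ldots,w(k)\} \neq \{1,\ldots,k\}$, and via the condition $\{w(k+1),\ldots,w(n)\} \neq \{k+1,\ldots,n\}$.

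\medskip

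The plan is to prove the cyclic chain of implications, or more efficiently to observe that the second and third conditions are trivially equivalent and then connect either one of them to the reduced-word condition. First I would dispatch the equivalence of the last two bullets: since $w$ is a bijection of $[1,n]$, the set $\{w(1),\ldots,w(k)\}$ equals $\{1,\ldots,k\}$ if and only if its complement $\{w(k+1),\ldots,w(n)\}$ equals the complement $\{k+1,\ldots,n\}$. This is immediate and requires no combinatorics of reduced words.

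\medskip

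Next I would connect the reduced-word condition to the second bullet. The cleanest route is via the subword property (Theorem~\ref{thm:subword property}) together with the standard fact that $\sigma_k \le w$ in the Bruhat order if and only if $w$ has an inversion ``crossing'' position $k$, i.e.\ there exist $i \le k < j$ with $w(i) > w(j)$; equivalently, $\{w(1),\ldots,w(k)\} \neq \{1,\ldots,k\}$. Indeed, $\{w(1),\ldots,w(k)\} = \{1,\ldots,k\}$ precisely says $w$ restricts to permutations of $\{1,\ldots,k\}$ and of $\{k+1,\ldots,n\}$, which is exactly the absence of any inversion pair straddling the gap between positions $k$ and $k+1$. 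Then I would note that $k \in \supp(w)$ if and only if $\sigma_k$ appears in some (equivalently, every) reduced word of $w$, which by Theorem~\ref{thm:subword property} is equivalent to $\sigma_k \le w$. Combining these gives $k \in \supp(w) \iff \sigma_k \le w \iff \exists\, i \le k < j,\ w(i) > w(j) \iff \{w(1),\ldots,w(k)\} \neq \{1,\ldots,k\}$.

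\medskip

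The main obstacle, such as it is, is justifying the equivalence $k \in \supp(w) \iff \sigma_k \le w$. One direction is clear: if $\sigma_k$ appears in a reduced word for $w$, then that single letter is a subword, so $\sigma_k \le w$. Conversely, if $\sigma_k \le w$, the subword property gives a reduced word of $w$ containing $\sigma_k$, hence $k \in \supp(w)$; since support is independent of the chosen reduced word, we are done. For the geometric reformulation of $\sigma_k \le w$, I would either cite the standard characterization of the Bruhat order via the tableau criterion / rank function of Björner--Brenti, or give the short direct argument: if $w$ fixes the partition $\{1,\ldots,k\} \sqcup \{k+1,\ldots,n\}$ setwise then every reduced word factors through the parabolic subgroup $\mathfrak{S}_{\{1,\ldots,k\}} \times \mathfrak{S}_{\{k+1,\ldots,n\}}$ and so cannot contain $\sigma_k$; conversely if $w(i) > w(j)$ for some $i \le k < j$, then swapping down to remove that inversion shows $\sigma_k \le w$ via the lifting property. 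This is the only point needing care, and it is genuinely routine.
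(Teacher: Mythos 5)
Your proof is correct. Note that the paper does not prove this lemma at all: it is quoted with a ``cf.'' citation to \cite[Lemma 2.8]{tenner rep patt}, so there is no internal argument to compare against; your write-up supplies a self-contained justification of the cited fact. The chain you use --- the second and third bullets are complementary statements about a bijection, and $k\in\supp(w)\iff \s_k\le w\iff$ there is an inversion straddling position $k$ --- is the standard one, and every step is sound. The only place where you lean on something nontrivial is the equivalence $\s_k\le w\iff \{w(1),\dots,w(k)\}\neq\{1,\dots,k\}$; citing the tableau/rank criterion is fine, but your sketched lifting argument (``swapping down to remove that inversion'') is the one spot left vague. You could avoid Bruhat order there entirely: $k\in\supp(w)$ iff $w$ does not lie in the parabolic subgroup $\mf{S}_{\{1,\dots,k\}}\times\mf{S}_{\{k+1,\dots,n\}}$, since any word omitting the letter $k$ represents an element of that parabolic, and conversely elements of the parabolic have reduced words omitting $k$; this parabolic is exactly the set of $w$ with $\{w(1),\dots,w(k)\}=\{1,\dots,k\}$, which finishes the proof without invoking the Bruhat-order criterion at all.
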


Several facts about $B(v) \cap B(w)$ follow directly from Theorem~\ref{thm:subword property}.

\begin{lemma}\label{lem:intersection atoms and maxes}
Consider $v,w \in \mf{S}_n$, where $v$ is boolean.
\begin{enumerate}\renewcommand{\labelenumi}{(\alph{enumi})}
\item The length $1$ elements of $B(v) \cap B(w)$ are $\supp(v) \cap \supp(w)$.
\item Each element of $B(v) \cap B(w)$ is itself a boolean permutation.
\item The intersection $B(v) \cap B(w)$ is an order ideal.
\end{enumerate}
\end{lemma}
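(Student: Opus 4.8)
The plan is to prove the three parts of Lemma~\ref{lem:intersection atoms and maxes} directly from the subword property, using the boolean characterization from Theorem~\ref{thm:boolean characterization} where it is needed.

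For part (a), an element of $B(v)\cap B(w)$ of length $1$ is a simple reflection $\s_k$, and by Theorem~\ref{thm:subword property} such an element lies in $B(v)$ if and only if $k$ appears as a letter in some (equivalently every) reduced word of $v$, i.e., $k\in\supp(v)$; the same holds for $w$. Hence the length $1$ elements of the intersection are precisely $\s_k$ for $k\in\supp(v)\cap\supp(w)$, which we identify with $\supp(v)\cap\supp(w)$ itself. This is immediate and presents no obstacle.

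For part (b), take any $u\in B(v)\cap B(w)$. Since $u\le v$ and $v$ is boolean, Theorem~\ref{thm:subword property} gives a reduced word for $u$ as a subword of some reduced word $\rw{s}\in R(v)$; by Theorem~\ref{thm:boolean characterization}, $\rw{s}$ has no repeated letters, so neither does this subword, and hence (again by Theorem~\ref{thm:boolean characterization}) $u$ is boolean. The only subtlety worth a sentence is that being boolean is a property of the permutation $u$, not of a chosen reduced word, but Theorem~\ref{thm:boolean characterization} phrases the characterization so that exhibiting one reduced word with no repeated letters suffices. For part (c), I need to show $B(v)\cap B(w)$ is downward closed: if $u'\le u$ and $u\in B(v)\cap B(w)$, then $u'\le u\le v$ and $u'\le u\le w$ by transitivity of the Bruhat order, so $u'\in B(v)\cap B(w)$. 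This is purely formal since each of $B(v)$ and $B(w)$ is by definition a principal order ideal, hence an order ideal, and the intersection of order ideals is an order ideal.

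None of the three parts is genuinely hard; if there is any obstacle at all, it is only the bookkeeping in part (b) of making sure the no-repeated-letters property transfers from a reduced word of $v$ to the induced reduced word of $u$ — but this is exactly what the subword property delivers, since a subword of a word with distinct letters again has distinct letters. I would therefore present the proof compactly, treating (c) and (a) in a sentence each and spending the bulk of the (short) argument on (b).
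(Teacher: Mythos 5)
Your proof is correct and follows exactly the route the paper intends: the paper states this lemma without proof, remarking only that the facts ``follow directly from Theorem~\ref{thm:subword property},'' and your arguments (subword property for (a) and (b), the no-repeated-letters characterization of Theorem~\ref{thm:boolean characterization} for (b), and transitivity for (c)) are precisely that direct deduction. Your extra remark in (b) that one repetition-free reduced word suffices to certify booleanness is a reasonable point to flag and is handled correctly.
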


Part (c) of Lemma~\ref{lem:intersection atoms and maxes} means that we will understand $B(v) \cap B(w)$ once we can describe its maximal elements. To do this, we must understand which subsets of $\supp(v) \cap \supp(w)$ will describe an element of $B(v) \cap B(w)$. The only concern arises from consecutive letters in $\supp(v) \cap \supp(w)$. If there are such letters, then they appear in a particular order in all elements of $R(v)$. If they can appear in the same order in an element of $R(w)$, then they can appear together in an element of $B(v) \cap B(w)$. If they never appear in that same order, in any elements of $R(w)$, then these two letters cannot appear together in any element of $B(v) \cap B(w)$.

\begin{definition}
Consider a permutation $w$ with $k,k+1 \in \supp(w)$. If all appearances of $k$ are to the left of all appearances of $k+1$ in reduced words for $w$, then $k$ and $k+1$ have \emph{increasing orientation} in $w$. If all appearances of $k$ are to the right of all appearances of $k+1$, then they have \emph{decreasing orientation}. Otherwise, their orientation is \emph{interlaced}. Two orientations \emph{match} unless one is increasing and the other is decreasing.
\end{definition}

\subsection{Maximal selfish subsets}\label{sec:intersections.2}

For a positive integer $k$, let us consider the set of integers $[1,k]$. Denote by
$\mathscr{Q}_k$ the set of all subsets $X\subset [1,k]$ that are maximal with respect to inclusions and that have the following property, which we call {\em selfishness}:
\begin{itemize}
\item if $i\in X$, then $i\pm 1\not\in X$.
\end{itemize}
Here is the list of $\mathscr{Q}_k$, for $k=1,2,3,4,5$:
\begin{displaymath}
\begin{array}{rcl}
\mathscr{Q}_1&=&\big\{\{1\}\big\},\\ 
\mathscr{Q}_2&=&\big\{\{1\},\{2\}\big\},\\ 
\mathscr{Q}_3&=&\big\{\{1,3\},\{2\}\big\},\\ 
\mathscr{Q}_4&=&\big\{\{1,3\},\{2,4\},\{1,4\}\big\}, \text{ and}\\ 
\mathscr{Q}_5&=&\big\{\{1,3,5\},\{2,5\},\{2,4\},\{1,4\}\big\}.
\end{array}
\end{displaymath}
We denote by $\mathscr{Q}'_k$ the set of all $X\in \mathscr{Q}_k$ such that $k\in X$
and we set $\mathscr{Q}''_k:=\mathscr{Q}_k\setminus\mathscr{Q}'_k$.

\begin{proposition}\label{prop-wm11}
{\hspace{1mm}}

\begin{enumerate}[label=(\alph{enumi})]
\item\label{prop-wm11.1} For $k>2$, the map $X\mapsto X\setminus\{k\}$ is a bijection from
$\mathscr{Q}'_k$ to $\mathscr{Q}_{k-2}$.
\item\label{prop-wm11.2} For $k>3$, the map $X\mapsto X\setminus\{k-1\}$ is a bijection from
$\mathscr{Q}''_k$ to $\mathscr{Q}_{k-3}$.
\item\label{prop-wm11.3} For $k>3$, we have $|\mathscr{Q}_k|=|\mathscr{Q}_{k-2}|+|\mathscr{Q}_{k-3}|$.
\end{enumerate}
\end{proposition}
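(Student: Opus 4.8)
The plan is to prove parts (a) and (b) first, since part (c) follows immediately from them by counting: if the two maps are bijections then $|\mathscr{Q}_k| = |\mathscr{Q}'_k| + |\mathscr{Q}''_k| = |\mathscr{Q}_{k-2}| + |\mathscr{Q}_{k-3}|$. So the real content is in (a) and (b). For both, the key observation is that selfishness is a purely local condition on consecutive integers, so removing an element near the top of $[1,k]$ and worrying only about what happens at the new top is the whole game. I would first record a small lemma: if $X$ is a maximal selfish subset of $[1,k]$, then $X$ must contain either $k$ or $k-1$ (otherwise $X \cup \{k\}$ is still selfish, contradicting maximality). This dichotomy is exactly the partition into $\mathscr{Q}'_k$ and $\mathscr{Q}''_k$, and it tells us that every $X \in \mathscr{Q}''_k$ contains $k-1$ (hence, by selfishness, not $k-2$ and not $k$).

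\textbf{Part (a).} For $X \in \mathscr{Q}'_k$ with $k > 2$, I claim $X \setminus \{k\}$ is a maximal selfish subset of $[1,k-2]$. It is selfish (removing an element cannot break selfishness), and it is contained in $[1,k-2]$ because $k \in X$ forces $k-1 \notin X$. For maximality: any $j \in [1,k-2]$ with $(X\setminus\{k\}) \cup \{j\}$ selfish would also make $X \cup \{j\}$ selfish in $[1,k]$ (the element $j \le k-2$ is not adjacent to $k$), contradicting maximality of $X$. Conversely, given $Y \in \mathscr{Q}_{k-2}$, set $X = Y \cup \{k\}$; this is selfish in $[1,k]$ since $k-1 \notin Y$ (as $Y \subseteq [1,k-2]$) and $k+1 \notin [1,k]$, and it is maximal: the only candidates to add are $k-1$ (blocked by $k$) and elements of $[1,k-2]$ (blocked by maximality of $Y$). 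These two constructions are mutually inverse, giving the bijection.

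\textbf{Part (b).} For $X \in \mathscr{Q}''_k$ with $k > 3$: by the dichotomy lemma, $k-1 \in X$, so $k \notin X$ and $k-2 \notin X$. Hence $X \setminus \{k-1\} \subseteq [1,k-3]$, and it is selfish. I then need maximality of $X \setminus \{k-1\}$ in $[1,k-3]$: if adding some $j \le k-3$ keeps it selfish, then $j$ is not adjacent to $k-1$ (since $j \le k-3 < k-2$), so $X \cup \{j\}$ would be selfish in $[1,k]$, contradicting maximality of $X$. Conversely, for $Y \in \mathscr{Q}_{k-3}$, put $X = Y \cup \{k-1\}$: selfish because $k-2, k \notin Y$; in $\mathscr{Q}''_k$ because $k \notin X$; and maximal because the only new candidates are $k$ (blocked by $k-1$), $k-2$ (blocked by $k-1$), and elements of $[1,k-3]$ (blocked by maximality of $Y$). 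Again the maps are mutually inverse. One subtlety to check: that $X = Y \cup \{k-1\}$ genuinely lands in $\mathscr{Q}''_k$ and not $\mathscr{Q}'_k$ — this is immediate since $k \notin X$ — and that the hypotheses $k > 2$ resp.\ $k > 3$ are exactly what is needed for the target index sets $[1,k-2]$, $[1,k-3]$ to be nonempty and for the small-case bookkeeping (e.g.\ $k-3 \ge 1$) to make sense.

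\textbf{Expected main obstacle.} None of the steps is deep; the only place to be careful is the maximality verifications, where one must make sure that \emph{every} potential element to be added is accounted for — both the one or two elements near the top that are blocked by adjacency to $k$ or $k-1$, and all the lower elements that are blocked by maximality of the smaller set. The argument is clean precisely because adjacency is local, but writing it so that no candidate slips through is the part that needs the most attention. A secondary point is to state the boundary conditions ($k>2$ in (a), $k>3$ in (b)) correctly; the lists of $\mathscr{Q}_k$ given in the text for small $k$ serve as a useful sanity check (e.g., $|\mathscr{Q}_4| = 3 = |\mathscr{Q}_2| + |\mathscr{Q}_1| = 2 + 1$, and $|\mathscr{Q}_5| = 4 = |\mathscr{Q}_3| + |\mathscr{Q}_2| = 2 + 2$).
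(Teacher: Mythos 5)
Your proof is correct and follows essentially the same route as the paper: both rest on the dichotomy that a maximal selfish subset of $[1,k]$ must contain $k$ or $k-1$, and both verify the explicit bijections by checking selfishness and maximality on each side. The only cosmetic difference is that the paper handles part (b) by observing $\mathscr{Q}''_k=\mathscr{Q}'_{k-1}$ and reusing part (a), while you verify (b) directly, which amounts to the same computation.
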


\begin{proof}
If $k\in X$, then $k-1\not\in X$ due to selfishness. Therefore $Y:=X\setminus\{k\}$ is a selfish
subset of $[1,k-2]$. If $Y$ were not maximal, we would be able to 
add to it some element $i\in [1,k-2]$ preserving its selfishness. Since $k-1\not\in X$, the subset $X\cup\{i\}$ would also be selfish,
contradicting the maximality of $X$. Therefore $Y\in \mathscr{Q}_{k-2}$.

Conversely, given $Y\in \mathscr{Q}_{k-2}$, the set $X:=Y\cup\{k\}$ is, clearly, selfish. By
the maximality of $Y$, we cannot add to $X$ any $i\in  [1,k-2]$ without violating
selfishness. Clearly, we cannot add $k-1$ either since $k\in X$. Therefore
$X\in \mathscr{Q}'_k$. This proves Claim~\ref{prop-wm11.1}.

Note that $X\in\mathscr{Q}''_k$ implies $k-1\in X$,
for otherwise $X\cup\{k\}$ would be selfish, contradicting the maximality of $X$.
Therefore $\mathscr{Q}''_k=\mathscr{Q}'_{k-1}$ and
Claim~\ref{prop-wm11.2} follows from Claim~\ref{prop-wm11.1}
applied to $\mathscr{Q}'_{k-1}$. 
Claim~\ref{prop-wm11.3} follows
from Claims~\ref{prop-wm11.1} and \ref{prop-wm11.2}.
\end{proof}

From Proposition~\ref{prop-wm11}\ref{prop-wm11.3}, it follows that the
sequence $\{|\mathscr{Q}_{k}|\,:\,k\geq 1\}$ is the (suitably offsetted) 
Padovan sequence \cite[A000931]{oeis}. 

The above concept has a natural generalization to an arbitrary
finite subset $\mathcal{U}$ of $\mathbb{Z}_{>0}$ (a {\em universe}).
We can consider selfish subsets of such a $\mathcal{U}$,
maximal with respect to inclusions. We denote the set of all such
subsets by $\mathscr{Q}(\mathcal{U})$. The set $\mathcal{U}$ has 
a unique decomposition 
\begin{displaymath}
\mathcal{U}=\coprod_i  \mathcal{U}_i
\end{displaymath}
of minimal length into a disjoint union of interval subsets
(i.e., each $\mathcal{U}_i$ is of the form $[p,p+q]$).
The minimality of the length means that $\mathcal{U}_i\cup\mathcal{U}_j$
is not an interval subset for any $i\neq j$.

It is clear that maximal selfish subsets of $\mathcal{U}$ are just 
the unions of maximal selfish subsets of the individual $\mathcal{U}_i$'s,
and the maximal selfish subsets in each $\mathcal{U}_i$ are completely
described by Proposition~\ref{prop-wm11}.

\subsection{Intersections with principal ideals of boolean elements}\label{sec:intersections.3}

Our next step is to describe $B(v) \cap B(w)$, for an arbitrary permutation $w$. 
As initially presented, this will require understanding properties about the 
reduced words for $v$ and $w$. However, following the theorem, we will show 
how it can also be determined from the one-line notations for $v$ and $w$.

Let $v,w \in \mf{S}_n$ be such that $v$ is boolean and fix some $\rw{s} \in R(v)$. Denote by $\mathcal{W}(v,w)$ the set of all minimal 
subwords of $\rw{s}$ of the form $\rw{i(i+1)\cdots(i+j)}$
or $\rw{(i+j)\cdots(i+1)i}$ with the following properties:
\begin{itemize}
\item if $\rw{i(i+1)\cdots(i+j)}$ is a subword of $\rw{s}$,
then $\rw{i(i+1)\cdots(i+j)}\not\leq w$ while both
$\rw{i(i+1)\cdots(i+j-1)}\leq w$ and $\rw{(i+1)(i+2)\cdots(i+j)}\leq w$;
\item if $\rw{(i+j)\cdots(i+1)i}$ is a subword of $\rw{s}$,
then $\rw{(i+j)\cdots(i+1)i}\not\leq w$ while both
$\rw{(i+j-1)\cdots(i+1)i}\leq w$ and $\rw{(i+j)\cdots(i+2)(i+1)}\leq w$.
\end{itemize}
Further, denote by $\mathcal{W}(v,w)^\uparrow$ the set of all maximal subwords
of $\rw{s}$ which do not have any of the elements in $\mathcal{W}(v,w)$
as subwords. In other words, $\mathcal{W}(v,w)^\uparrow$ is the set of maximal
elements in the complement to the filter generated by $\mathcal{W}(v,w)$
inside $B(v)$. Note that $\mathcal{W}(v,w)$ contains all simple reflections from
$\supp(v) \setminus \supp(w)$.

For example, if $v=\rw{321}$ and $w=\rw{2132}$, then we  have 
$\mathcal{W}(v,w)=\{\rw{321}\}$ and, consequently, 
$\mathcal{W}(v,w)^\uparrow=\{\rw{21},\rw{31},\rw{32}\}$. Similarly, if $v = \rw{32145}$ and $w = \rw{4521324}$, then $\mathcal{W}(v,w) = \{\rw{321},\rw{345}\}$ and $\mathcal{W}(v,w)^\uparrow = \{\rw{2145},\rw{314},\rw{315},\rw{324},\rw{325}\}$.

\begin{proposition}\label{thm-int-part1}
For $v,w \in \mf{S}_n$ with $v$ boolean and $\rw{s} \in R(v)$, 
the set $\mathcal{W}(v,w)^\uparrow$ is exactly the set of all
maximal elements in $B(v) \cap B(w)$.
\end{proposition}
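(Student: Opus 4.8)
The plan is to prove both inclusions: every element of $\mathcal{W}(v,w)^\uparrow$ lies in $B(v)\cap B(w)$ and is maximal there, and conversely every maximal element of $B(v)\cap B(w)$ appears in $\mathcal{W}(v,w)^\uparrow$. Since $v$ is boolean, by Theorem~\ref{thm:boolean characterization} the reduced word $\rw{s}$ has no repeated letters, so subwords of $\rw{s}$ correspond bijectively to subsets of $\supp(v)$, and by Lemma~\ref{lem:intersection atoms and maxes}(c) the intersection $B(v)\cap B(w)$ is an order ideal inside the boolean lattice $B(v)$. Thus the whole problem translates into understanding which subsets $T\subseteq\supp(v)$ have the property that the corresponding subword of $\rw{s}$ is $\le w$ in Bruhat order, and then identifying the maximal such $T$.

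First I would establish the key local criterion: for $T\subseteq\supp(v)$, writing $\rw{s_T}$ for the corresponding subword of $\rw{s}$, we have $\rw{s_T}\le w$ if and only if $\rw{s_T}$ contains no element of $\mathcal{W}(v,w)$ as a subword. The ``only if'' direction is immediate from the subword property (Theorem~\ref{thm:subword property}): each element of $\mathcal{W}(v,w)$ is by definition not $\le w$, and Bruhat order is preserved under taking subwords, so if $\rw{s_T}\le w$ then no subword of $\rw{s_T}$ can fail to be $\le w$. The ``if'' direction is the substantive point. Here I would argue that the elements of $\mathcal{W}(v,w)$ are precisely the minimal ``forbidden'' intervals of consecutive letters (in increasing or decreasing orientation), and that any subword $\rw{s_T}$ of $\rw{s}$ avoiding all of them can be built up inside $B(w)$. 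The mechanism is the structure of $B(v)$ for boolean $v$: by Lemma~\ref{lem:commuting generators make product poi}, $B(\rw{s_T})$ decomposes as a direct product over the maximal runs of consecutive integers in $T$ sharing a common orientation, and for each such run $\rw{i(i+1)\cdots(i+j)}$ (or its reverse), the hypothesis that it contains no element of $\mathcal{W}(v,w)$ combined with the minimality clause in the definition of $\mathcal{W}(v,w)$ forces the whole run to be $\le w$ — essentially an induction on $j$ using that $\mathcal{W}(v,w)$ records exactly the shortest run that first fails. One must also handle the non-consecutive letters, which commute with everything in their complementary blocks and so contribute freely; and one must use the observation (recorded just before the proposition) that simple reflections in $\supp(v)\setminus\supp(w)$ all lie in $\mathcal{W}(v,w)$, so any surviving $T$ is contained in $\supp(v)\cap\supp(w)$.

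Granting the local criterion, the proposition follows formally: the subsets $T$ with $\rw{s_T}\le v$ and $\rw{s_T}\le w$ are exactly those whose subword avoids every element of $\mathcal{W}(v,w)$, i.e. the complement of the filter generated by $\mathcal{W}(v,w)$ in $B(v)$; the maximal elements of that complement are by definition $\mathcal{W}(v,w)^\uparrow$. So I would conclude by noting that $\mathcal{W}(v,w)^\uparrow\subseteq B(v)\cap B(w)$, that each such element is maximal in the intersection (anything strictly above it in $B(v)$ acquires some element of $\mathcal{W}(v,w)$ as a subword, hence is not $\le w$), and conversely that any maximal element of $B(v)\cap B(w)$, being a subword of $\rw{s}$ avoiding $\mathcal{W}(v,w)$, is dominated by some element of $\mathcal{W}(v,w)^\uparrow$ and hence equals it.

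The main obstacle I anticipate is the ``if'' direction of the local criterion — showing that avoiding the minimal forbidden runs is \emph{sufficient} for $\rw{s_T}\le w$. The delicate issue is that $w$ is arbitrary (not boolean), so its reduced words may have repeated and interlaced letters, and one cannot simply read off a single orientation. The argument needs to show that the "obstructions" to $\rw{s_T}\le w$ are genuinely local to single runs of consecutive integers with a fixed orientation, i.e. that there is no global obstruction coming from the interaction of several disjoint runs. I expect this to follow from the product decomposition of $B(v)$ together with a careful application of Theorem~\ref{thm:subword property} to embed each run separately and then recombine via commutations, but making the recombination rigorous — verifying that compatible subwords of $\rw{s}$ in disjoint letter-blocks can be simultaneously realized as a subword of a single reduced word for $w$ — is where the real work lies, and may require an auxiliary lemma on how $\supp$-blocks of $w$ interact (in the spirit of Lemma~\ref{lem:commuting generators make product poi} applied to $w$ itself, or an inductive peeling of $\rw{s}$ letter by letter).
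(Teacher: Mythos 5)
Your reduction of the problem to the local criterion---$\rw{s_T}\le w$ if and only if the subword $\rw{s_T}$ of $\rw{s}$ contains no element of $\mathcal{W}(v,w)$---is the right reformulation, and the ``only if'' half, together with the formal derivation of the proposition from the criterion, is fine. But the ``if'' half, which you yourself flag as the substantive point, is exactly where your argument is missing, and the route you sketch does not supply it. The claim that the only obstructions to $\rw{s_T}\le w$ are single oriented runs (no ``global'' obstruction arising from the interaction of several disjoint blocks, and none from non-run-shaped subwords within a block) is the entire content of the proposition. Your plan of decomposing $B(\rw{s_T})$ into blocks, embedding each run separately via Theorem~\ref{thm:subword property}, and ``recombining via commutations'' begs the question: the separate subword embeddings into reduced words of $w$ need not be simultaneously realizable, since $w$ is arbitrary, its reduced words may interlace the blocks, and commutation moves alone do not splice independent embeddings together. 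Moreover, within one block your proposed induction on $j$ only controls run-shaped subwords, whereas minimality in $B(v)\setminus B(w)$ must be tested against \emph{all} subwords (for instance those containing both endpoints of a run but skipping an interior letter).

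What closes this gap in the paper is an order-theoretic input rather than word combinatorics. Since $B(v)$ is an order ideal, a minimal element of the complement of $B(v)\cap B(w)$ inside $B(v)$ is in fact a minimal element of $\mf{S}_n\setminus B(w)$, hence join-irreducible; by Lascoux--Sch\"utzenberger \cite{LS} the join-irreducible permutations are exactly the bigrassmannian ones (unique left and unique right descent), and the boolean bigrassmannian permutations are precisely the runs $\rw{i(i+1)\cdots(i+j)}$ and $\rw{(i+j)\cdots(i+1)i}$ (cf.\ \cite[Figure~7]{KMM2}). This is precisely the statement that every minimal obstruction is a single run with a fixed orientation, from which your local criterion---and then the proposition, exactly as you outline---follows. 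Without this fact, or an equivalent substitute proving that the minimal elements outside a principal Bruhat ideal are bigrassmannian, your proposal remains an outline whose key step is unproven.
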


\begin{proof}
Each minimal element in $\mf{S}_n\setminus B(w)$ is, by definition,  join irreducible.
It is well-known, see \cite{LS}, that join irreducible elements in $\mf{S}_n$
are exactly the elements with unique left descent and unique right descent
(i.e. the so-called bigrassmannian elements, see also \cite{KMM2,KMM3} for more details). 
From the classification of all bigrassmannian elements in $\mf{S}_n$ (see, for example,
\cite[Figure~7]{KMM2}), it follows that the boolean bigrassmannian elements are exactly 
the elements of the form $\rw{i(i+1)\cdots(i+j)}$ or $\rw{(i+j)\cdots(i+1)i}$.

This implies that the minimal  elements in the complement  to 
$B(v) \cap B(w)$ in $B(v)$ are all of the form $\rw{i(i+1)\cdots(i+j)}$ or 
$\rw{(i+j)\cdots(i+1)i}$, for some $i$ and $j$. Now the claim of the 
proposition follows directly from the definitions of the sets
$\mathcal{W}(v,w)$ and $\mathcal{W}(v,w)^\uparrow$.
\end{proof}

In the special case of the intersection of two boolean principal ideals, 
we can make the statement of Theorem~\ref{thm-int-part1} more precise.
Fix permutations $v,w \in \mf{S}_n$ such that $v$ is boolean. 
Choose any element $\rw{s} \in R(v)$. Denote by 
$\mathcal{V}(v,w)$ the set of
all letters $k$ for which there is $x\in\{k\pm 1\}$ such that
the orientation of the pair $\{k,x\}$ in $v$ and $w$ does not match.

\begin{corollary}\label{cor:intersecting with boolean elements}
For $v,w \in \mf{S}_n$ with both $v$ and $w$ boolean and $\rw{s} \in R(v)$, 
the maximal elements in the order ideal 
$B(v) \cap B(w)$ are exactly the subwords of $\rw{s}$ whose support is of the form
\begin{equation}\label{eq-wm-02}
\big((\supp(v) \cap \supp(w))\setminus \mathcal{V}(v,w)\big)\cup X,
\quad\text{ for some }\quad X\in\mathscr{Q}(\mathcal{V}(v,w)).
\end{equation}
\end{corollary}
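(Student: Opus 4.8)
The plan is to deduce Corollary~\ref{cor:intersecting with boolean elements} from Proposition~\ref{thm-int-part1} by analyzing, for each interval block of $\supp(v)\cap\supp(w)$, exactly which consecutive pairs are ``forbidden'' in $B(v)\cap B(w)$ and then identifying the resulting maximal subwords with the maximal selfish subsets of $\mathcal{V}(v,w)$. First I would recall from Proposition~\ref{thm-int-part1} that the maximal elements of $B(v)\cap B(w)$ are the maximal subwords of $\rw{s}\in R(v)$ avoiding every element of $\mathcal{W}(v,w)$ as a subword. Since both $v$ and $w$ are boolean, no reduced word has a repeated letter, so every element of $\mathcal{W}(v,w)$ of length $\geq 2$ is literally a consecutive pair $\rw{k(k+1)}$ or $\rw{(k+1)k}$: indeed, if $\rw{i(i+1)\cdots(i+j)}\in\mathcal{W}(v,w)$ with $j\geq 2$, then the defining minimality together with the boolean hypothesis on $w$ forces $j=1$, because in a word with distinct letters $\rw{i(i+1)\cdots(i+j-1)}\leq w$ and $\rw{(i+1)\cdots(i+j)}\leq w$ already imply $\rw{i(i+1)\cdots(i+j)}\leq w$ once $j\geq 2$ (the two subwords overlap in the long middle run and can be merged — this uses that $w$ boolean has no repeated letters, so the letters $i,\dots,i+j$ each occur once and their relative order is forced). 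So $\mathcal{W}(v,w)$ consists precisely of the singletons $\{k\}$ for $k\in\supp(v)\setminus\supp(w)$ together with the pairs $\rw{k(k+1)}$ (resp. $\rw{(k+1)k}$) for which $k,k+1\in\supp(v)\cap\supp(w)$ but the orientation of $\{k,k+1\}$ in $v$ does not match its orientation in $w$ — that is, exactly the consecutive pairs inside $\mathcal{V}(v,w)$ whose two elements are adjacent integers.

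Next I would translate ``avoiding $\mathcal{W}(v,w)$ as a subword'' into a condition on supports. A subword $\rw{t}$ of $\rw{s}$ (equivalently, a boolean element $\leq v$) lies in $B(v)\cap B(w)$ iff $\supp(\rw{t})\subseteq\supp(v)\cap\supp(w)$ and $\supp(\rw{t})$ contains no adjacent pair $\{k,k+1\}$ with mismatched orientation; here I would note that because $v$ is boolean and letters are distinct, a set of letters $T\subseteq\supp(v)$ is the support of a (unique) subword of $\rw{s}$, so maximal elements of $B(v)\cap B(w)$ correspond bijectively to maximal admissible subsets $T$. Writing $A:=\supp(v)\cap\supp(w)$ and $V:=\mathcal{V}(v,w)\cap A$, the admissible sets are exactly the $T\subseteq A$ such that $T$ contains no ``bad'' adjacent pair. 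A key observation is that $k\in\mathcal{V}(v,w)$ iff $k$ belongs to some bad adjacent pair, so $A\setminus V$ consists of letters that may always be included freely, while inside $V$ the bad pairs are precisely all adjacent pairs of $V$-elements that are also consecutive integers — I would need the small lemma that if $k,k+1\in\mathcal{V}(v,w)$ then $\{k,k+1\}$ is automatically a mismatched pair, which follows from the boolean structure: a letter enters $\mathcal{V}(v,w)$ only by witnessing a mismatch with a neighbor, and one checks that two adjacent letters both in $\mathcal{V}(v,w)$ cannot both have their mismatches ``point away'' from each other without the pair $\{k,k+1\}$ itself being mismatched (this is where I would do the short case analysis on increasing/decreasing/interlaced orientations of $\{k-1,k\}$, $\{k,k+1\}$, $\{k+1,k+2\}$ in $v$ versus $w$). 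Granting this, the maximal admissible $T$ are $(A\setminus V)\cup X$ where $X$ ranges over subsets of $V$ that are maximal with the property of containing no two consecutive integers — i.e. $X\in\mathscr{Q}(\mathcal{V}(v,w))$, using the universe decomposition into interval blocks described after Proposition~\ref{prop-wm11}. Since $A\setminus V$ and $V$ are disjoint and no element of $A\setminus V$ is forced out by anything, maximality of $T$ is equivalent to maximality of $X$ within $V$, giving exactly the sets in~\eqref{eq-wm-02}.

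The main obstacle I expect is the bookkeeping lemma in the previous paragraph: verifying that ``$k,k+1\in\mathcal{V}(v,w)$'' implies the pair $\{k,k+1\}$ is itself mismatched, and more generally that the bad adjacent pairs inside a maximal interval block of $\mathcal{V}(v,w)$ are \emph{all} consecutive pairs of that block (not a sparser set). If the bad pairs were only some of the consecutive pairs, the maximal admissible subsets would not be the maximal selfish subsets but something coarser, so this step is genuinely load-bearing. I would handle it by a direct orientation analysis: for a letter $k$ with both $k-1,k+1$ present in $\supp(v)\cap\supp(w)$, $k\in\mathcal{V}(v,w)$ means the $v$-orientation of $\{k-1,k\}$ or of $\{k,k+1\}$ disagrees with the $w$-orientation; tracking the finitely many combinations — and using that in a boolean permutation the orientations of $\{k-1,k\}$ and $\{k,k+1\}$ are not independent (they are governed by whether $k$ sits at a ``peak'', ``valley'', or ``slope'' of the reduced word) — one pins down precisely that adjacent $\mathcal{V}(v,w)$-members always form a mismatched pair. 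Once this is in hand, the rest is the routine identification with $\mathscr{Q}$ via the interval-block decomposition, and the corollary follows.
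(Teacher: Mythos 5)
The first half of your proposal is sound and matches the paper's own argument: you reduce to Proposition~\ref{thm-int-part1} and observe that for boolean $w$ no element of $\mathcal{W}(v,w)$ can have $j\geq 2$, since the two shorter runs already force the full run to lie below $w$; this is exactly the paper's first step. The genuine gap is in the lemma that you yourself flag as load-bearing, namely that $k,k+1\in\mathcal{V}(v,w)$ forces the pair $\{k,k+1\}$ to be mismatched. That claim is false, and no case analysis on orientations can rescue it, because in boolean permutations the orientations of $\{k-1,k\}$ and $\{k,k+1\}$ are completely independent: all four combinations occur already among the words $\rw{(k-1)k(k+1)}$, $\rw{(k+1)k(k-1)}$, $\rw{k(k+1)(k-1)}$ and $\rw{(k-1)(k+1)k}$. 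Concretely, take $v=\rw{1234}=23451$ and $w=\rw{2143}=31524$ in $\mf{S}_5$, both boolean. The pairs $\{1,2\}$ and $\{3,4\}$ are mismatched (increasing in $v$, decreasing in $w$), while $\{2,3\}$ is increasing in both; hence $\mathcal{V}(v,w)=\{1,2,3,4\}$ although $\{2,3\}$ is not a mismatched pair. Here $\mathcal{W}(v,w)=\{\rw{12},\rw{34}\}$, and the maximal elements of $B(v)\cap B(w)$ are $\rw{13},\rw{14},\rw{23},\rw{24}$; in particular $\rw{23}$ is maximal, yet its support $\{2,3\}$ is not of the form \eqref{eq-wm-02}, since $\mathscr{Q}(\{1,2,3,4\})=\big\{\{1,3\},\{1,4\},\{2,4\}\big\}$.

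So the step you isolated really is the crux, and it cannot be closed in the way you propose. What the argument up to that point actually yields is that the maximal supports are the maximal subsets of $\supp(v)\cap\supp(w)$ containing no mismatched consecutive pair (maximal independent sets of the graph of mismatched pairs); this agrees with the selfish-subset description via $\mathscr{Q}(\mathcal{V}(v,w))$ only when every consecutive pair inside $\mathcal{V}(v,w)$ is itself mismatched, which the example above shows need not happen. Note that the paper's proof passes over exactly this point, asserting that the identification of $\mathcal{W}(v,w)^\uparrow$ with \eqref{eq-wm-02} ``follows directly from the definitions''; your write-up makes the hidden assumption explicit, but since that assumption can fail, your proposed proof is incomplete at this step (and the statement itself needs either an additional hypothesis or a reformulation in terms of the mismatched pairs rather than all consecutive pairs in $\mathcal{V}(v,w)$).
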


\begin{proof}
For boolean $w$, the two conditions
\begin{displaymath}
\rw{i(i+1)\cdots(i+j-1)}\leq w\text{ and }\rw{(i+1)(i+2)\cdots(i+j)}\leq w, \text{ with }j>1, 
\end{displaymath}
imply $\rw{i(i+1)\cdots(i+j)}\leq w$. Similarly, the two conditions
\begin{displaymath}
\rw{(i+j-1)\cdots(i+1)i}\leq w\text{ and }\rw{(i+j)\cdots(i+2)(i+1)}\leq w, \text{ with }j>1, 
\end{displaymath}
imply $\rw{(i+j)\cdots(i+1)i}\leq w$.  This means that the set
$\mathcal{W}(v,w)$ consists of elements of the form
$[i(i+1)]$ or $[(i+1)i]$. It follows directly from the definitions 
that Formula~\eqref{eq-wm-02} describes exactly the elements of $\mathcal{W}(v,w)^\uparrow$.
Now the claim follows from Proposition~\ref{thm-int-part1}.
\end{proof}

In fact, the previous argument shows that we can use this construction whenever there are no minimal subwords having $j>1$, using the notation at the beginning of this section.

\begin{corollary}\label{cor:intersecting with elements that always have j=1}
For $v,w \in \mf{S}_n$ with $v$ boolean and $\rw{s} \in R(v)$, if all elements of $\mathcal{W}(v,w)$ have $j=1$, then the maximal elements in the order ideal 
$B(v) \cap B(w)$ are exactly the subwords of $\rw{s}$ whose support is of the form
\begin{equation}\label{eq-max element construction when j=1}
\big((\supp(v) \cap \supp(w))\setminus \mathcal{V}(v,w)\big)\cup X,
\quad\text{ for some }\quad X\in\mathscr{Q}(\mathcal{V}(v,w)).
\end{equation}
\end{corollary}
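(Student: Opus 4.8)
The plan is to reduce Corollary~\ref{cor:intersecting with elements that always have j=1} to the argument already given for Corollary~\ref{cor:intersecting with boolean elements}, observing that boolean-ness of $w$ was only used in one place.

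First I would recall from the proof of Corollary~\ref{cor:intersecting with boolean elements} exactly where the hypothesis ``$w$ boolean'' entered: it was used solely to guarantee that $\mathcal{W}(v,w)$ contains no minimal subword $\rw{i(i+1)\cdots(i+j)}$ or $\rw{(i+j)\cdots(i+1)i}$ with $j>1$. Indeed, for boolean $w$ the two conditions $\rw{i(i+1)\cdots(i+j-1)}\leq w$ and $\rw{(i+1)(i+2)\cdots(i+j)}\leq w$ with $j>1$ force $\rw{i(i+1)\cdots(i+j)}\leq w$, which prevents such a long subword from belonging to $\mathcal{W}(v,w)$; hence every element of $\mathcal{W}(v,w)$ has the form $\rw{i(i+1)}$ or $\rw{(i+1)i}$, i.e. $j=1$. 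Once that structural fact about $\mathcal{W}(v,w)$ is in hand, the rest of the proof of Corollary~\ref{cor:intersecting with boolean elements} makes no further use of the boolean hypothesis on $w$: it simply reads off from the definitions that $\mathcal{W}(v,w)^\uparrow$ consists precisely of the subwords of $\rw{s}$ with support of the form~\eqref{eq-max element construction when j=1}, and then invokes Proposition~\ref{thm-int-part1}.

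So the argument I would write is short: assume instead the hypothesis that every element of $\mathcal{W}(v,w)$ already has $j=1$. Then $\mathcal{W}(v,w)$ again consists only of elements $\rw{i(i+1)}$ or $\rw{(i+1)i}$ (together with the singletons $\rw{i}$ for $i\in\supp(v)\setminus\supp(w)$, which correspond to the isolated points one removes). For a pair of consecutive letters $k,k+1\in\supp(v)\cap\supp(w)$, the two-letter subword of $\rw{s}$ on $\{k,k+1\}$ lies in $\mathcal{W}(v,w)$ exactly when the orientation of $\{k,k+1\}$ in $v$ fails to match its orientation in $w$ — that is, exactly when $k$ or $k+1$ lies in $\mathcal{V}(v,w)$. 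Consequently the maximal subwords of $\rw{s}$ avoiding all of $\mathcal{W}(v,w)$ are those whose support first discards $\mathcal{V}(v,w)$ from $\supp(v)\cap\supp(w)$ and then adds back a maximal selfish subset $X\in\mathscr{Q}(\mathcal{V}(v,w))$ of the discarded letters; this is precisely the description~\eqref{eq-max element construction when j=1}. Applying Proposition~\ref{thm-int-part1} identifies these with the maximal elements of $B(v)\cap B(w)$, completing the proof.

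I do not anticipate a genuine obstacle here, since the content is a straightforward re-examination of the preceding proof; the only thing to be careful about is making explicit that ``orientation does not match'' for the pair $\{k,k+1\}$ is equivalent to the length-two subword on $\{k,k+1\}$ being a (minimal) element of $\mathcal{W}(v,w)$, and that when all minimal subwords have $j=1$ there are no other obstructions to gluing together a selfish choice of consecutive letters. In the write-up I would simply note that ``the previous argument shows'' the claim, pointing back to the two displayed implications in the proof of Corollary~\ref{cor:intersecting with boolean elements} and to the definition of $\mathcal{V}(v,w)$, rather than repeating the bijective bookkeeping with $\mathscr{Q}(\mathcal{V}(v,w))$.
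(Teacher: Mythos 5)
Your proposal is correct and takes essentially the same route as the paper, which gives no separate proof of this corollary but simply remarks that the argument for Corollary~\ref{cor:intersecting with boolean elements} used booleanness of $w$ only to force every element of $\mathcal{W}(v,w)$ to have $j=1$, so that once this is assumed as a hypothesis the same reading of the definitions together with Proposition~\ref{thm-int-part1} yields the claim. One small imprecision in your write-up: the length-two subword on $\{k,k+1\}$ lies in $\mathcal{W}(v,w)$ exactly when that pair's orientations in $v$ and $w$ fail to match (which places both $k$ and $k+1$ in $\mathcal{V}(v,w)$), not merely when ``$k$ or $k+1$ lies in $\mathcal{V}(v,w)$''; this does not change the structure of your argument, which otherwise mirrors the paper's.
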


\subsection{The same property via one-line notation}\label{sec:intersections.4}

The relative order(s) of $k$ and $k+1$ can be read off by looking at reduced words of a permutation, but it would be nice if they could also be determined directly from its one-line notation. We use the next results to show how that can be done.

\begin{proposition}\label{prop:letters interlace or not}
Consider a permutation $w$, with $\{k,k+1\} \subseteq \supp(w)$. Then, exactly one of the following possibilities holds.
\begin{enumerate}\renewcommand{\labelenumi}{(\roman{enumi})}
\item $k$ and $k+1$ are interlaced in $w$, meaning that $w$ has a reduced word with one of the following forms:
$$\rw{\cdots k \cdots (k+1) \cdots k \cdots} \hspace{.25in} \text{or} \hspace{.25in} \rw{\cdots (k+1) \cdots k \cdots (k+1) \cdots}$$
\item $k$ and $k+1$ have either increasing or decreasing orientation in $w$, meaning that $w$ has a reduced word with one of the following forms:
$$\rw{\Big( \ \text{letters } \le k \ \Big)\Big( \ \text{letters } \ge k+1 \ \Big)} \hspace{.25in} \text{or} \hspace{.25in} \rw{\Big( \ \text{letters } \ge k+1 \ \Big)\Big( \ \text{letters } \le k \ \Big)}$$
\end{enumerate}
\end{proposition}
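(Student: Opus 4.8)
The trichotomy itself is essentially forced by the definitions: for the pair $\{k,k+1\}\subseteq\supp(w)$, exactly one of ``increasing orientation'', ``decreasing orientation'', and ``interlaced'' holds, the first two being option~(ii) and the last option~(i); moreover the two block forms listed under~(ii) cannot both describe a reduced word of $w$, since $k$ and $k+1$ each occur in every reduced word of $w$. So the real content is to exhibit, in each case, an explicit reduced word of the asserted shape. The basic device I would use throughout is: for $\rw{t}\in R(w)$, pass to the subword of $\rw{t}$ on the two-letter alphabet $\{k,k+1\}$ (where both letters occur, as $k,k+1\in\supp(w)$). An elementary inspection, splitting on the first letter, shows that any word on $\{k,k+1\}$ in which both letters occur is of exactly one of three kinds: (a) all copies of $k$ precede all copies of $k+1$; (b) all copies of $k+1$ precede all copies of $k$; or (c) it contains $k(k+1)k$ or $(k+1)k(k+1)$ as a subsequence. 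Call a reduced word \emph{interlaced-looking} if its $\{k,k+1\}$-subword is of kind (c); such a word has one of the two forms required in~(i).

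For~(ii), suppose first that $k$ and $k+1$ have increasing orientation. Starting from any reduced word, I would repeatedly apply a commutation that moves a letter $\ge k+1$ to the right past an immediately following letter $\le k$; this is a legal move precisely unless the pair is $(k+1,k)$. Each such move strictly decreases the (nonnegative) number of pairs in which a letter $\ge k+1$ occurs to the left of a letter $\le k$, so the procedure terminates. At a terminal word --- still a reduced word for $w$, since commutations preserve this --- any letter $\ge k+1$ immediately followed by a letter $\le k$ would have to be a $k+1$ followed by a $k$; but that would place a copy of $k+1$ to the left of a copy of $k$, contradicting increasing orientation. Hence the terminal word has the form $\rw{(\text{letters}\le k)(\text{letters}\ge k+1)}$. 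The decreasing case is the mirror argument, pushing letters $\le k$ rightward instead, and produces $\rw{(\text{letters}\ge k+1)(\text{letters}\le k)}$. (Alternatively, Lemma~\ref{lem:commuting generators make product poi} can be invoked directly here.)

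For~(i), suppose $k$ and $k+1$ are interlaced, and partition $R(w)=\mathcal A\sqcup\mathcal B\sqcup\mathcal C$ according to whether the $\{k,k+1\}$-subword is of kind (a), (b), or (c). It suffices to prove $\mathcal C\neq\emptyset$. Being interlaced means $w$ has neither increasing nor decreasing orientation, i.e.\ $\mathcal A\neq R(w)$ and $\mathcal B\neq R(w)$; if $\mathcal C=\emptyset$ this forces $\mathcal A\neq\emptyset$, $\mathcal B\neq\emptyset$, and $R(w)=\mathcal A\sqcup\mathcal B$. Since any two reduced words of $w$ are joined by commutation and braid moves \cite{matsumoto,tits}, some single such move would carry a word in $\mathcal A$ to a word in $\mathcal B$. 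I would rule this out by cases: a commutation move never changes the $\{k,k+1\}$-subword, because a letter that commutes with $k$ cannot equal $k+1$ (and symmetrically); a braid move not involving $k$ or $k+1$ likewise changes nothing; a braid move on $\{k-1,k\}$ or on $\{k+1,k+2\}$ alters only the multiplicity of one letter of the $\{k,k+1\}$-subword, and does so inside a window of three consecutive positions containing no occurrence of the other letter of $\{k,k+1\}$, so it cannot flip the monotonicity kind (a)$\leftrightarrow$(b); and a braid move on $\{k,k+1\}$ can only be applied to a word whose $\{k,k+1\}$-subword already contains $k(k+1)k$ or $(k+1)k(k+1)$, i.e.\ to a word of $\mathcal C$. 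This contradicts $R(w)=\mathcal A\sqcup\mathcal B$, so $\mathcal C\neq\emptyset$, completing the argument.

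\textbf{Main obstacle.} The only delicate point is the braid-move bookkeeping in the last paragraph --- specifically, confirming that a braid move on $\{k-1,k\}$ or on $\{k+1,k+2\}$ truly cannot turn a reduced word with a monotone $\{k,k+1\}$-subword into one with the opposite monotonicity (one must check that the single $k$, resp.\ $k+1$, that is duplicated or merged sits strictly on the correct side of every occurrence of $k+1$, resp.\ $k$, and that nothing outside the three-letter window moves). Everything else is routine.
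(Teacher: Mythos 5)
Your two constructions are correct: the potential-function commutation argument does produce a block-form reduced word when the orientation is increasing (or, mirrored, decreasing), and your connectivity argument, with the three-position window bookkeeping for braid moves, does show that an interlaced pair admits a reduced word of the interlacing shape; the point you flag as delicate is exactly the right check and it goes through. The gap is in the surrounding logic. You dismiss the ``exactly one'' as forced by the definitions, but that only settles the trichotomy increasing/decreasing/interlaced. Part of the proposition's content --- and the part the paper proves first and uses later, e.g.\ in the proof of Theorem~\ref{thm:detecting no interlace}(c) --- is the exclusivity of the two existence statements: if $w$ has a reduced word of one of the block forms in (ii), then $k$ and $k+1$ are \emph{not} interlaced; equivalently, a block-form word and an interlacing word cannot coexist in $R(w)$. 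Nothing in your proposal yields this. Your case analysis is run only under the hypothesis $\mathcal C=\emptyset$ and only excludes a single move flipping kind (a) to kind (b); it leaves open, for instance, that a braid move on $\{k-1,k\}$ carries a kind-(a) word to a kind-(c) word, so from ``$w$ has a block-form word'' you cannot conclude ``no reduced word of $w$ interlaces $k$ and $k+1$,'' i.e.\ you do not get the converse implication packed into the ``meaning that'' clause of (ii), nor the mutual exclusivity of (i) and (ii).

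The repair is cheap and uses only what you already wrote: your window argument in fact shows that a braid move on $\{k-1,k\}$ or $\{k+1,k+2\}$ keeps every occurrence of $k$ on the same side of every occurrence of $k+1$, so it preserves the kind (a)/(b)/(c) of the $\{k,k+1\}$-subword outright, not merely ``does not exchange (a) and (b)''; together with your other cases, every commutation and braid move preserves the kind, so by Matsumoto--Tits the kind is constant on $R(w)$. That single invariance statement gives both ``meaning that'' equivalences, the exclusivity, and subsumes your $\mathcal C\neq\emptyset$ argument. This is also essentially how the paper argues: its first step is precisely the assertion that an interlacing word cannot be turned by commutation and braid moves into a one-sided word (hence no (ii)-form word exists in the interlaced case), and in the non-interlaced case it produces the block form by pushing letters with commutations, much as your termination argument does.
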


\begin{proof}
Reduced words for $w$ contain both $k$ and $k+1$. Suppose that some $\rw{s} \in R(w)$ interlaces $k$ and $k+1$. Then no sequence of commutation and braid moves can produce a word in which all appearances of $k$ are to one side of all appearances of $k+1$, so $w$ has no reduced words of the forms shown in (ii). Alternatively, suppose that no elements of $R(w)$ interlace $k$ and $k+1$. Thus, in each element of $R(w)$, all appearances of $k$ are to one side of all appearances of $k+1$. Suppose, without loss of generality, that $\rw{s} \in R(w)$ has the form $s = \alpha \beta$, where $\alpha$ contains $k$ but not $k+1$, and $\beta$ contains $k+1$ but not $k$. Then, as in \cite{tenner rwm}, we can use commutations to push all $x > k+1$ in $\alpha$ to the right and all $x < k$ in $\beta$ to the left, in order to find an element of $R(w)$ having one of the forms depicted in (ii).
\end{proof}

As discussed previously, these increasing or decreasing orientations restrict which elements can appear in an intersection of principal order ideals. We now describe how to identify a permutation having this property.

\begin{theorem}\label{thm:detecting no interlace}
Consider a permutation $w \in \mf{S}_n$, with $\{k,k+1\} \subseteq \supp(w)$.
\begin{enumerate}\renewcommand{\labelenumi}{(\alph{enumi})}
\item 
A reduced word for $w$ has the form
$$\rw{\Big( \ \text{letters } \le k \ \Big)\Big( \ \text{letters } \ge k+1 \ \Big)}$$
(that is, $k$ and $k+1$ have increasing orientation in $w$) if and only if there exists $x < k+1$ such that
 $\{w(1),\cdots,w(k)\} = [1,k+1] \setminus \{x\}$ and $w^{-1}(x) > k+1$.
\item 
A reduced word for $w$ has the form
$$\rw{\Big( \ \text{letters } \ge k+1 \ \Big)\Big( \ \text{letters } \le k \ \Big)}$$
(that is, $k$ and $k+1$ have decreasing orientation in $w$) if and only if there exists $y > k+1$ such that
$\{w(k+2),\ldots, w(n)\} = [k+1,n] \setminus \{y\}$ and $w^{-1}(y) < k+1$.
\item 
The letters $k$ and $k+1$ are interlaced in $w$ if and only if there exists $i \in [1,k]$ and $j \in [k+2,n]$ such that $w(i) > k+1$ and $w(j) < k+1$.
\end{enumerate}
\end{theorem}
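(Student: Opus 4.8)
The plan is to prove all three parts of Theorem~\ref{thm:detecting no interlace} together, using Proposition~\ref{prop:letters interlace or not} as the organizing device: since $\{k,k+1\}\subseteq\supp(w)$, exactly one of \emph{increasing orientation}, \emph{decreasing orientation}, \emph{interlaced} occurs, so it suffices to show that each of the three one-line conditions (a), (b), (c) forces the corresponding orientation, and that the three one-line conditions are themselves mutually exclusive and exhaustive. In fact I would prove the three ``if'' directions and mutual exclusivity; the ``only if'' directions then come for free by the trichotomy.

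For part (a), suppose $w$ has a reduced word of the form $\rw{(\text{letters}\le k)(\text{letters}\ge k+1)}$, say $w = uv$ where $u$ uses only generators $\sigma_1,\dots,\sigma_k$ and $v$ uses only $\sigma_{k+1},\dots,\sigma_{n-1}$, and this is a length-additive factorization. Then $u\in\mf{S}_{[1,k+1]}$ permutes $\{1,\dots,k+1\}$ among themselves and fixes everything larger, while $v\in\mf{S}_{[k+1,n]}$ fixes $\{1,\dots,k\}$. Composing (right to left, as the paper does), $w = uv$ sends $\{1,\dots,k\}$ first by $v$ — which fixes $1,\dots,k$ — and then by $u$; so $\{w(1),\dots,w(k)\} = u(\{1,\dots,k\})$, a $k$-subset of $\{1,\dots,k+1\}$, hence equal to $[1,k+1]\setminus\{x\}$ where $x = u(k+1)$. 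Moreover, since $k,k+1\in\supp(w)$ and $\sigma_k$ appears, $u$ does not fix $k+1$, so $x\ne k+1$, i.e.\ $x<k+1$. Finally $w^{-1}(x) = v^{-1}u^{-1}(x) = v^{-1}(k+1)$; since $v$ only moves entries in positions $\ge k+1$ and $k+1\in\supp(w)$ forces $\sigma_{k+1}$ to occur in $v$ (otherwise the pair $\{k,k+1\}$ would not both be supported in a single ``increasing'' factorization — one should check this carefully), we get $v^{-1}(k+1) > k+1$. The converse within (a): given the one-line condition, the key claim is that $\{w(1),\dots,w(k)\} = [1,k+1]\setminus\{x\}$ already forces $\supp$ of $w$ below level $k$ to be ``decoupled'' from the part above $k+1$ \emph{except} through a single bigrassmannian-type chain $\rw{k(k-1)\cdots}$ or $\rw{k(k+1)\cdots}$; Lemma~\ref{lem:support detection} identifies which letters lie in the support, and the extra condition $w^{-1}(x)>k+1$ rules out interlacing by part (c). Part (b) is proved by the same argument applied to $w_0 w w_0$ (or to $w^{-1}$, after noting how orientation behaves under inversion), using the third bullet of Lemma~\ref{lem:support detection} in place of the second.

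For part (c), the cleanest route is: by the trichotomy it is enough to show that the stated one-line condition is incompatible with \emph{both} increasing and decreasing orientation, and that its negation is incompatible with interlacing. If $k,k+1$ have increasing orientation, the factorization $w=uv$ above shows every position $i\le k$ satisfies $w(i) = u(v(i)) = u(i) \le k+1$, and in fact the only position among $1,\dots,k+1$ that can receive or send a value ``across'' level $k+1$ is governed by the single value $x$; in particular there is no $i\le k$ with $w(i)>k+1$ \emph{and} simultaneously no $j\ge k+2$ with $w(j)<k+1$ forced — I would phrase this as: increasing orientation $\iff$ condition (a) $\implies$ $\{w(1),\dots,w(k)\}\subseteq[1,k+1]$, so $w(i)>k+1$ fails for $i\le k$. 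Symmetrically decreasing orientation $\implies$ $\{w(k+2),\dots,w(n)\}\subseteq[k+1,n]$, so $w(j)<k+1$ fails for $j\ge k+2$. Hence if there exist $i\le k$ with $w(i)>k+1$ and $j\ge k+2$ with $w(j)<k+1$, the orientation is neither increasing nor decreasing, so it is interlaced. Conversely, if no such pair exists, then either $\{w(1),\dots,w(k)\}\subseteq[1,k+1]$ or $\{w(k+2),\dots,w(n)\}\subseteq[k+1,n]$; combined with $k,k+1\in\supp(w)$ and Lemma~\ref{lem:support detection}, a short case analysis shows we land in case (a) or (b), hence not interlaced.

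The main obstacle I anticipate is the bookkeeping in the ``only if''/converse directions of (a) and (b): translating the set equality $\{w(1),\dots,w(k)\} = [1,k+1]\setminus\{x\}$ together with $w^{-1}(x)>k+1$ into the existence of a reduced word split cleanly at level $k/k{+}1$. The honest way to do this is to reduce it to part (c) — show the hypotheses of (a) preclude interlacing (so the orientation is increasing or decreasing by Proposition~\ref{prop:letters interlace or not}) and then use the value $x$ and the position $w^{-1}(x)$ to distinguish increasing from decreasing. That keeps the combinatorics of actually manipulating reduced words (pushing large letters right, small letters left, as in the proof of Proposition~\ref{prop:letters interlace or not}) confined to a single place rather than repeated three times. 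I would therefore structure the writeup as: (1) prove the forward direction of (a) and (b) by the factorization argument; (2) prove (c) from the trichotomy plus (1); (3) deduce the converses of (a) and (b) from (c) plus the $x$/$w^{-1}(x)$ refinement.
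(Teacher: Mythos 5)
Your forward directions of (a) and (b) are fine and essentially coincide with the paper's argument (factor $w=tu$ with $\supp(t)\subseteq[1,k]$, $\supp(u)\subseteq[k+1,n-1]$ and apply Lemma~\ref{lem:support detection} to each factor). The problem is the overall logical structure (1)$\to$(2)$\to$(3): it is circular, and as a result the genuinely hard content of the theorem is never proved. In step (2) you need the direction ``interlaced $\Rightarrow$ such $i,j$ exist,'' equivalently ``if no such pair exists then $k,k+1$ are not interlaced.'' Your sketch of this says that if, say, no such $i$ exists, then a support-detection case analysis shows ``we land in case (a) or (b), hence not interlaced.'' But landing in case (a) means only that the \emph{one-line condition} of (a) holds; concluding ``hence not interlaced'' requires knowing that this one-line condition forces a reduced word split at level $k$ --- which is exactly the converse direction of (a) that you postpone to step (3) and propose to deduce \emph{from} (c). So (2) uses (3) and (3) uses (2). (Your step (3) itself would be fine if (c) were available: under the hypotheses of (a) there is no $i\le k$ with $w(i)>k+1$, so (c) rules out interlacing, and the position of $x$ rules out decreasing orientation via the forward direction of (b).)

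There is no way around actually producing the split reduced word (or otherwise excluding interlacing) from the one-line data, and this is what the paper does and your proposal omits. Given $x<k+1$ with $\{w(1),\dots,w(k)\}=[1,k+1]\setminus\{x\}$ and $i:=w^{-1}(x)>k+1$, the paper forms the explicit permutation $u:=(\s_k\s_{k-1}\cdots\s_x)\,w\,(\s_{i-1}\cdots\s_{k+1})$ as in Equation~\eqref{eqn:detecting no interlace, in proof}, checks the factorization is length-additive ($\ell(u)=\ell(w)+x-i$), uses Lemma~\ref{lem:support detection} to see $k,k+1\notin\supp(u)$, splits a reduced word of $u$ as $\rw{\alpha\beta}$ with $\alpha$ using letters $<k$ and $\beta$ letters $>k+1$, and reassembles $\rw{x(x+1)\cdots k\,\alpha\beta\,(k+1)\cdots(i-1)}\in R(w)$. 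Some such construction (or an equivalent parabolic-factorization argument) must appear in your write-up as the base case of the converse of (a); part (b) then follows by conjugation with $w_0$, and your derivation of (c) from (a), (b) and Proposition~\ref{prop:letters interlace or not} becomes legitimate --- which is precisely the order of the paper's proof.
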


\begin{proof}
Consider part (a) of the theorem.

Suppose that $w$ has such a reduced word. This reduced word indicates that $w = tu$, where $\supp(t) \subseteq [1,k]$ and $\supp(u) \subseteq [k+1,n-1]$. Moreover, because $\{k,k+1\} \subseteq \supp(w)$, we must have $k \in \supp(t)$ and $k+1 \in \supp(u)$. The permutation $u$ fixes all $i < k+1$. On the other hand, by Lemma~\ref{lem:support detection}, $u(k+1) > k+1$ and $u^{-1}(k+1) = y > k+1$. Similarly, the permutation $t$ fixes all $i > k+1$, with $t(k+1) < k+1$ and $t^{-1}(k+1) = x < k+1$. Thus in the product $w = tu$, we have
$$w(y) = tu(y) = t(k+1) < k+1.$$
Moreover, for all $z \neq y$, either $u$ fixes $z$ or $t$ fixes $u(z)$. In particular, $w(x) = tu(x) = t(x) = k+1$, completing the proof of this direction.

Now suppose that there exists $x < k+1$ such that
 $\{w(1),\cdots,w(k)\} = [1,k+1] \setminus \{x\}$ and $i:= w^{-1}(x) > k+1$. Consider the permutation
\begin{equation}\label{eqn:detecting no interlace, in proof}
u := (\s_k \s_{k-1} \cdots \s_x) w (\s_{i-1} \s_{i-2} \cdots \s_{k+1}).
\end{equation}
The factor on the right in Equation~\eqref{eqn:detecting no interlace, in proof} slides $x$ leftward in the one-line notation for $w$, swapping it with $w(j) > k+1 > x$ at each step, until $x$ is sitting in position $k+1$ of the permutation. Recall the hypotheses on $w$. The factor on the left in Equation~\eqref{eqn:detecting no interlace, in proof} swaps the value $x$ with the value $x+1$ in the one-line notation, then $x+1$ with $x+2$, and so on, always moving the larger value into position $k+1$ from somewhere to the left of that position. Therefore $\ell(u) = \ell(w) - (k-x+1) - (i-k-1) = \ell(w) + x - i$. After all of these transpositions, the resulting permutation $u$ fixes $k+1$, and $u(i) \in [1,k]$ for all $i \in [1,k]$. Therefore, by Lemma~\ref{lem:support detection}, we have $k,k+1 \not\in \supp(u)$, and thus there exists some $\rw{\alpha\beta} \in R(u)$ in which $\alpha$ contains only letters less than $k$ and $\beta$ contains only letters that are greater than $k+1$. Hence
$$\rw{x(x+1)\cdots (k-1)k \alpha \beta (k+1)(k+2) \cdots (i-2)(i-1)} \in R(w),$$
and this is the desired reduced word.

Part (b) follows from part (a) using conjugation by $w_0$ (which acts on $S_n$
by reversing the one-line notation).

Now consider part (c) of the theorem.

Suppose, first, that there is no such $i$. Because $\{k,k+1\} \subseteq \supp(v)$, Lemma~\ref{lem:support detection} says that $\{w(1), \ldots, w(k)\} \neq [1,k]$ and $\{w(1),\ldots, w(k+1)\} \neq [1,k+1]$. Thus, if there is no such $i$, then we must have $w(h) = k+1$ for some $h < k+1$, $w(k+1) > k+1$, and $w(q) \in [1,k]$ for all $q \in [1,k] \setminus \{h\}$. But then $w$ has the form described in part (a) of the current theorem, and so, by Proposition~\ref{prop:letters interlace or not}, the letters $k$ and $k+1$ are not interlaced in $w$. Similarly, if there is no such $j$, then the result will follow from part (b) of the current theorem.

Now suppose that there are such $i$ and $j$. Then the permutation $w$ has neither form from parts (a) or (b) of the current theorem, so, by Proposition~\ref{prop:letters interlace or not}, the letters $k$ and $k+1$ are interlaced in $w$.
\end{proof}

Lemma~\ref{lem:support detection} gave a method for detecting the support of a permutation from its one-line notation. Theorem~\ref{thm:detecting no interlace} gives a method for determining the orientation of any $\{k,k+1\} \subseteq \supp(w)$ from the one-line notation of $w$, as well. In some ways, this is an analogy to Theorem~\ref{thm:boolean characterization}, which equates reduced word properties with pattern-avoiding (one-line notation) properties. 

We can use this orientation detection to construct the maximal elements of $B(v) \cap B(w)$ when $v$ and $w$ are both boolean, following Corollary~\ref{cor:intersecting with boolean elements}. Moreover, by Corollary~\ref{cor:intersecting with elements that always have j=1}, we can also use it with no conditions on $w$, when all subwords discussed at the beginning of Section~\ref{sec:intersections.3} have $j=1$.

\begin{example}\label{ex:312647895}
Consider the boolean permutation $v = 312647895 \in \mf{S}_9$ 
and the non-boolean permutation $w = 325184769 \in \mf{S}_9$, both written 
in one-line notation. Using Lemma~\ref{lem:support detection}, we can compute
$$\supp(v) = \{1,2,4,5,6,7,8\} \hspace{.25in} \text{and} \hspace{.25in} \supp(w) = \{1,2,3,4,5,6,7\}.$$
The intersection of these sets is $\{1,2,4,5,6,7\}$, so we check four orientations 
using Theorem~\ref{thm:detecting no interlace}.
\begin{center}
{\renewcommand{\arraystretch}{1.5}\begin{tabular}{c||c|c}
\vspace{-.1in}Consecutive & Orientation & Orientation\\
generators & in $v$ & in $w$\\
\hline
\hline
$\{\s_1,\s_2\}$ & decreasing & interlaced\\
\hline
$\{\s_4,\s_5\}$ & decreasing & increasing\\
\hline
$\{\s_5,\s_6\}$ & increasing & decreasing\\
\hline
$\{\s_6,\s_7\}$ & increasing & interlaced
\end{tabular}}
\end{center}
The middle column of the table tells us that the only subword we need to worry about is $\rw{567}$, but the rightmost column shows that, in fact, the subwords discussed at the beginning of Section~\ref{sec:intersections.3} all have $j=1$. Thus we can use Corollary~\ref{cor:intersecting with elements that always have j=1}, with $\mathcal{V}(v,w) = \{4,5,6\}$. Therefore, the maximal elements of $B(v) \cap B(w)$ are defined from any 
$\rw{s} \in R(v)$ by deleting $8$ (which is not in $\supp(w)$), 
and then by deleting the complement of a maximal selfish subset of 
$\{4,5,6\}$ (i.e., either deleting $5$ or deleting both $4$ and $6$).
So if we take $\rw{s} = \rw{5214678}$, then the maximal elements of the intersection are defined by
$$\rw{521\xcancel{4} \xcancel{6}7 \xcancel{8}} = 312465879 \hspace{.25in} \text{and} \hspace{.25in} \rw{\xcancel{5}21467 \xcancel{8}} = 312547869.$$
In other words,
$$B(312647895) \cap B(325184769) = B(\rw{5217}) \cup B(\rw{21467} ).$$
\end{example}

The most extreme case of non-matching orientation is that of the boolean element
$v$ and $w=v^{-1}$, here is an example.

\begin{example}
Consider $v = 312647895 = \rw{5214678} \in \mf{S}_9$, as in Example~\ref{ex:312647895}. 
Then, following Subsection~\ref{sec:intersections.2} and Corollary~\ref{cor:intersecting with boolean elements}, 
we have $\supp(v) = [1,2]\cup [4,8]$. The eight maximal elements of $B(v) \cap B(v^{-1})$ have reduced decompositions defined by the product $\{1,2\} \times \{468, 47, 57, 58\}$, as discussed in Section~\ref{sec:intersections.2}. That is,
\begin{align*}
B(v) \cap B(v^{-1}) = B(\rw{1468}) &\cup B(\rw{147}) \cup B(\rw{157}) \cup B(\rw{158})\\
 \cup& B(\rw{2468}) \cup B(\rw{247}) \cup B(\rw{257}) \cup B(\rw{258}).
\end{align*}
\end{example}

\section{Motivation:  Incidence algebras and grades of simple modules}\label{sec:motivation}

\subsection{Incidence algebras and their modules}\label{sec:motivation.1}

Let us fix an algebraically closed field $\Bbbk$. As usual, we denote by $*$
the classical $\Bbbk$-duality $\mathrm{Hom}_\Bbbk({}_-,\Bbbk)$.

Let $(\mathbf{P},\prec)$ be a finite poset. 
Consider the incidence algebra $\mathcal{I}(\mathbf{P})$ over $\Bbbk$. The 
algebra $\mathcal{I}(\mathbf{P})$ can be described by its  Gabriel quiver $\Gamma$ that has
\begin{itemize}
\item the elements of $\mathbf{P}$ as vertices;
\item the arrows $p\to q$, for each pair $(p,q)\in \mathbf{P}^2$ such that $p$ covers $q$;
\end{itemize}
and the relations that, for any $(p,q)\in \mathbf{P}^2$, all paths from $p$ to $q$ coincide.

As usual, the simple $\mathcal{I}(\mathbf{P})$-modules are in bijection with 
the elements in $\mathbf{P}$. Given $p\in \mathbf{P}$, the corresponding simple module $L_p$ 
is one-dimensional at $p$ and zero-dimensional at all other vertices. Furthermore,
all arrows from the Gabriel quiver act on $L_p$ as the zero linear maps.

The indecomposable projective cover $P_p$ of $L_p$ is supported on the 
ideal $\mathbf{P}_{\preceq p}$, is one-dimensional at each point of this ideal and
zero-dimensional at all other points, and all arrows between the elements of $\mathbf{P}_{\preceq p}$
act as the identity linear transformations.

Dually, the indecomposable injective envelope $I_p$ of $L_p$ is supported on the 
coideal (that is, a filter) $\mathbf{P}_{\succeq p}$, is one-dimensional at each point of this coideal and
zero-dimensional at all other points, and all arrows between the elements of $\mathbf{P}_{\succeq p}$
operate as the identity linear transformations. See Subsection~\ref{sec:motivation.5} for an example.

We denote by $\mathcal{I}(\mathbf{P})$-mod the category of finite dimensional 
(left) $\mathcal{I}(\mathbf{P})$-modules, which we identify with the category of 
modules over the above quiver satisfying the above relations.

\subsection{Grades of simple modules}\label{sec:motivation.2}

Since the Gabriel quiver of $\mathcal{I}(\mathbf{P})$ is acyclic, the algebra $\mathcal{I}(\mathbf{P})$
has finite global dimension. In particular, for each $0\neq M\in \mathcal{I}(\mathbf{P})$-mod, 
the following invariant, called  {\em grade}, is well-defined and finite:
\begin{displaymath}
\mathbf{grade}(M):=\min\{i\,:\, 
\mathrm{Ext}_{\mathcal{I}(\mathbf{P})}^i(M,\mathcal{I}(\mathbf{P}))\neq 0\}\leq 
\mathrm{proj.dim}(M).
\end{displaymath}

Of special interest for us will be the grades of the simple modules $L_p$, where $p\in\mathbf{P}$.

\subsection{Grades via the Serre functor}\label{sec:motivation.3}

Consider the bounded derived category $\mathcal{D}^b(\mathcal{I}(\mathbf{P}))$ of $\mathcal{I}(\mathbf{P})$-mod.
Since $\mathcal{I}(\mathbf{P})$ has finite global dimension, the category $\mathcal{D}^b(\mathcal{I}(\mathbf{P}))$
has a Serre functor $\mathbb{S}$ given by the left derived functor of tensoring with the dual bimodule
$\mathcal{I}(\mathbf{P})^*$. The functor $\mathbb{S}$ is a self-equivalence  of $\mathcal{D}^b(\mathcal{I}(\mathbf{P}))$, see \cite{bondal_kapranov,happel}.
We have $\mathbb{S}P_p\cong I_p$, for each $p\in\mathbf{P}$.

\begin{lemma}\label{lem.wm-1}
For $0\neq M\in \mathcal{I}(\mathbf{P})$, the grade of $M$ coincides with 
the minimal $i$ such that 
the $-i$-th homology  of the complex $\mathbb{S}M$ is non-zero.
\end{lemma}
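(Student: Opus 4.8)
The plan is to deduce the statement from Serre duality by a direct computation, using that the regular module is projective. Write $A:=\mathcal{I}(\mathbf{P})$ and regard $A$ as the stalk complex concentrated in degree $0$. For each $i$ we have, by definition of $\mathrm{Ext}$ as morphisms in the derived category,
\begin{displaymath}
\mathrm{Ext}_A^i(M,A)\;\cong\;\mathrm{Hom}_{\mathcal{D}^b(A)}\big(M,A[i]\big).
\end{displaymath}
Applying the defining property of the Serre functor, namely the bifunctorial isomorphism $\mathrm{Hom}_{\mathcal{D}^b(A)}(X,Y)\cong \mathrm{Hom}_{\mathcal{D}^b(A)}(Y,\mathbb{S}X)^*$, with $X=M$ and $Y=A[i]$, gives
\begin{displaymath}
\mathrm{Ext}_A^i(M,A)\;\cong\;\mathrm{Hom}_{\mathcal{D}^b(A)}\big(A[i],\mathbb{S}M\big)^*\;=\;\mathrm{Hom}_{\mathcal{D}^b(A)}\big(A,\mathbb{S}M[-i]\big)^*.
\end{displaymath}

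The next step is to evaluate the right-hand side. Since $A$ is projective as a left module over itself, $\mathrm{RHom}_A(A,-)$ is the identity functor, so for any complex $N$ and any $j$ we have $\mathrm{Hom}_{\mathcal{D}^b(A)}(A,N[j])\cong H^{j}(N)$; concretely this is just ``evaluate at $1\in A$''. Taking $N=\mathbb{S}M$ and $j=-i$ yields
\begin{displaymath}
\mathrm{Ext}_A^i(M,A)\;\cong\;H^{-i}(\mathbb{S}M)^*
\end{displaymath}
as $\Bbbk$-vector spaces. In particular $\mathrm{Ext}_A^i(M,A)\neq 0$ if and only if the $(-i)$-th homology of $\mathbb{S}M$ is non-zero, and taking the minimum over $i$ gives exactly the asserted equality $\mathbf{grade}(M)=\min\{i:\,H^{-i}(\mathbb{S}M)\neq 0\}$. (Alternatively, one can decompose $A=\bigoplus_{p\in\mathbf{P}}P_p$, use $\mathbb{S}P_p\cong I_p$ together with $\mathrm{Hom}_A(P_p,-)\cong e_p(-)$, and reassemble; the uniform argument above is shorter.)

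There is no serious obstacle here: the only points requiring care are bookkeeping of the cohomological shift conventions and the direction of the Serre duality pairing, together with the observation that $\mathbb{S}M$ is genuinely a complex (as $\mathbb{S}$ is a derived functor), so that the notation $H^{-i}(\mathbb{S}M)$ is meaningful. Since $A$ has finite global dimension, all of the objects involved lie in $\mathcal{D}^b(A)$ and $\mathbb{S}$ is defined on them, so every step above is legitimate.
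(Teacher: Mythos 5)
Your proof is correct, and it follows a slightly different route than the paper's. The paper applies the equivalence $\mathbb{S}$ to \emph{both} arguments, turning $\mathrm{Hom}_{\mathcal{D}^b}(M,\mathcal{I}(\mathbf{P})[i])$ into $\mathrm{Hom}_{\mathcal{D}^b}(\mathbb{S}M,\mathbb{S}\mathcal{I}(\mathbf{P})[i])$, and then uses that $\mathbb{S}\mathcal{I}(\mathbf{P})\cong\bigoplus_p I_p$ is an injective cogenerator, so that maps into it detect the homology of $\mathbb{S}M$; this only invokes $\mathbb{S}$ being an equivalence together with $\mathbb{S}P_p\cong I_p$. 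You instead invoke the defining Serre duality isomorphism $\mathrm{Hom}(X,Y)\cong\mathrm{Hom}(Y,\mathbb{S}X)^*$ once, moving everything to maps \emph{out of} the projective generator $\mathcal{I}(\mathbf{P})$, which compute homology on the nose; the shift bookkeeping $\mathrm{Hom}(A[i],\mathbb{S}M)\cong\mathrm{Hom}(A,\mathbb{S}M[-i])\cong H^{-i}(\mathbb{S}M)$ is handled correctly. Your route has the small advantage of producing the explicit natural isomorphism $\mathrm{Ext}^i_{\mathcal{I}(\mathbf{P})}(M,\mathcal{I}(\mathbf{P}))\cong H^{-i}(\mathbb{S}M)^*$ of $\Bbbk$-vector spaces, which is a bit more than the nonvanishing equivalence the lemma asserts, while the paper's route avoids any explicit use of the duality pairing beyond the existence of the equivalence. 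Both arguments rest on the same foundations (finite global dimension, so $\mathbb{S}$ exists as the derived tensor with the dual bimodule), and either one proves the lemma.
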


\begin{proof}
By definition, the grade of  $M$ is the minimal value of $i$ such that
\begin{displaymath}
\mathrm{Hom}_{\mathcal{D}^b(\mathcal{I}(\mathbf{P}))}(M,\mathcal{I}(\mathbf{P})[i])\neq 0. 
\end{displaymath}
Applying the equivalence $\mathbb{S}$, we obtain
\begin{displaymath}
\mathrm{Hom}_{\mathcal{D}^b(\mathcal{I}(\mathbf{P}))}(\mathbb{S}M,\mathbb{S}\mathcal{I}(\mathbf{P})[i])\neq 0. 
\end{displaymath}
It remains to note that $\mathbb{S}\mathcal{I}(\mathbf{P})$ is an injective cogenerator of
$\mathcal{I}(\mathbf{P})$-mod and hence taking homomorphisms into it detects the homology.
\end{proof}

Lemma~\ref{lem.wm-1} suggests that, to determine the grade of $L_p$, one needs to take a minimal
projective resolution of $L_p$, apply $\mathbb{S}$ to it and then understand the homology of
the obtained complex.

\subsection{Incidence algebras for Bruhat posets of finite Weyl groups}\label{sec:motivation.4}
Assume now that $\mathrm{char}(\Bbbk)=0$.

Let $W$ be a {\em finite Weyl group} and $S$ a fixed set of  simple  reflections in $W$.
Then $W$ is a poset with respect to the {\em Bruhat order} $\leq$. This poset has the
minimum element $e$ and the maximum element $w_0$, the longest element of $W$. 
We denote by $\ell:W\to \mathbb{Z}_{\geq 0}$
the associated {\em length function}. For simplicity, we denote by $A$ the
$\Bbbk$-algebra $\mathcal{I}((W,\leq))$. The algebra $A$ is the main protagonist in our motivation.

We would like to determine $\mathbf{grade}(L_w)$, for each $w\in W$. 
Taking into account the observations in the previous subsection, 
let us start with a description of projective resolutions of 
the modules $L_w$, where $w\in W$. 

For $i\geq 0$, denote by $V_i$ the formal vector space with basis 
$\{v_w\,:\,\ell(w)=i\}$. By \cite{BGG}, for each $i$, there exists 
a linear map $d_i:V_i\to V_{i+1}$ such that
\begin{itemize}
\item the $v_x$-$v_y$-coefficients of $d_i$ is non-zero if and only if
$y\leq x$;
\item all such non-zero coefficients are $\pm 1$;
\item $d_{i+1}\circ d_{i}=0$, for all $i$.
\end{itemize}
The associated complex 
\begin{displaymath}
0\to V_0\to V_1\to \dots\to V_{\ell(w_0)}\to 0 
\end{displaymath}
is exact and is called a {\em BGG complex}. It has the property that 
its restriction to the part supported at a principal (co)ideal is 
exact (unless it is the ideal of the minimum element, respectively 
the coideal of the maximum element). This complex can also be interpreted
as the singular homology complex for $W$.

For $i\geq 0$, denote by $Q(w,i)$ the direct sum of all $P_x$, where $x\leq w$
and $\ell(w)-\ell(x)=i$. 

\begin{proposition}\label{propwm-2}
There is a projective resolution of $L_w$ of the form
\begin{equation}\label{eq-wm-3}
\dots\to Q(w,2)\to Q(w,1)\to Q(w,0)\to 0,
\end{equation}
where, for a summand $P_x$ in $Q(w,i)$ and $P_y$ in $Q(w,i-1)$ such that $x\leq y$,
the map from $P_x$ to $P_y$ is given by the corresponding coefficient in the BGG complex.
\end{proposition}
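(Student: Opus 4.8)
The plan is to build the resolution~\eqref{eq-wm-3} directly from the BGG complex for the Weyl group $W$, reading the differentials off the $\pm 1$ coefficients produced by \cite{BGG}. First I would recall that, over the incidence algebra $A$, the indecomposable projective $P_x$ has a one-dimensional ``top'' at $x$ and is the span of the interval $A_{\preceq x}$; thus a map of projectives $P_x\to P_y$ is completely determined by a scalar (its value on the top generator), and composites of such maps are governed by commutativity of the quiver relations. For each $i$, set $Q(w,i)=\bigoplus_{x\le w,\ \ell(w)-\ell(x)=i} P_x$, and define the map $Q(w,i)\to Q(w,i-1)$ on the summand $P_x$ (with $x\le w$, $\ell(w)-\ell(x)=i$) by sending its top generator to $\sum_y c_{x,y}\,(\text{top of }P_y)$, where $y$ ranges over elements with $x\le y\le w$ and $\ell(y)-\ell(x)=1$, and $c_{x,y}$ is the $v_x$-$v_y$-coefficient of the BGG differential $d_{\ell(x)}$. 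Because this coefficient is non-zero exactly when $x\le y$, every arrow in this complex is a genuine map of $A$-modules.

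The two things to verify are that this is a chain complex and that it is a resolution of $L_w$. For the complex condition $d_{i-1}\circ d_i=0$: a composite $P_x\to P_y\to P_z$ (with $\ell(z)-\ell(x)=2$) contributes $c_{x,y}c_{y,z}$ to the $P_x\to P_z$ component, and summing over the at most two intermediate $y$'s with $x<y<z$ gives exactly the $v_x$-$v_z$-coefficient of $d_{\ell(x)+1}\circ d_{\ell(x)}$, which vanishes by the third bullet of the BGG properties. One subtlety here is that equality of morphisms of projectives must be checked not just on the top generators but compatibly with the $A$-action; this follows because each $P_x$ is cyclic, generated by its top, so a map out of $P_x$ is determined by the image of that generator, and the incidence-algebra relations (all paths $p\to q$ coincide) force the candidate maps to be well-defined. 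For exactness: restrict attention to the principal ideal $B(w)=W_{\le w}$. On that ideal the BGG complex is still exact (this is precisely the stated property that the BGG complex restricted to a principal ideal is exact, since $B(w)$ is the coideal of the minimum inside the interval poset $[e,w]$—equivalently one reindexes by codimension from $w$). Dualizing/reindexing, the complex $\cdots\to Q(w,2)\to Q(w,1)\to Q(w,0)$ computes, vertex by vertex at each $x\le w$, the reduced (co)homology of the order complex of the half-open interval $(x,w]$, which is acyclic when $x<w$ and one-dimensional in degree $0$ when $x=w$; hence the homology of the complex of projectives is $L_w$ concentrated in homological degree $0$.

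The main obstacle I expect is bookkeeping the sign/coefficient data so that ``$d^2=0$'' is literally inherited from the BGG complex rather than merely up to sign: one must align the indexing of $Q(w,i)$ (codimension $i$ below $w$) with the BGG grading (length $i$ from $e$), and check that the coefficients $c_{x,y}$ transplanted from $d_{\ell(x)}$ still satisfy the cocycle identity after this reindexing. This is where I would be most careful, but it is ultimately a reindexing of an identity that already holds, so no new input beyond \cite{BGG} is needed; the rest is the standard identification of a minimal projective resolution over an incidence algebra with the (co)homology of order complexes of intervals.
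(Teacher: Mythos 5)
Your proposal is correct and is essentially the paper's argument in expanded form: the paper's proof is a one-line appeal to the listed properties of the BGG complex, and your construction (differentials read off from the $\pm 1$ BGG coefficients, $d^2=0$ inherited since maps out of the cyclic projectives $P_x$ are determined by the image of the top generator, exactness checked vertex by vertex) is exactly what that appeal is meant to encode. The only point to phrase carefully is the vertexwise exactness: what you need is that the BGG complex restricted to a Bruhat interval $[x,w]$ with $x<w$ is exact (with the single one-dimensional contribution at $x=w$), which is the relevant ``property of the BGG complex'' here; your gloss via order complexes of $(x,w]$ is looser than necessary but points at the same standard fact.
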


\begin{proof}
This follows directly from the properties of the BGG complex listed above.
\end{proof}

For $i\geq 0$, denote by $F(w,i)$ the direct sum of all $I_x$, where $x\leq w$
and $\ell(w)-\ell(x)=i$. Applying $\mathbb{S}$ to \eqref{eq-wm-3} results in
a complex 
\begin{equation}\label{eq-wm-4}
\dots\to F(w,2)\to F(w,1)\to F(w,0)\to 0,
\end{equation}
with the property that, for a summand $I_x$ in $Q(w,i)$ and $I_y$ in $Q(w,i-1)$ 
such that $x\leq y$, the map from $I_x$ to $I_y$ is given by the corresponding 
coefficient in the BGG complex.

Now we want to understand the homology of \eqref{eq-wm-4}, or, more precisely, the 
rightmost degree in which  non-zero homology appears. This determines the
grade of $L_w$.

For $u\in W$, let us restrict \eqref{eq-wm-4} to the vertex $u$. Recall that 
the injective module $I_x$, for $x\in W$, is supported at $W_{\geq x}$.
Therefore, each $I_x\in Q(w,i)$ such that $x\leq u$ contributes one dimension
for the vertex $u$ at position $-i$ of the complex \eqref{eq-wm-4}.
That is, the restriction of \eqref{eq-wm-4} to $u$ is the complex
\begin{equation}\label{eq-wm-5}
\dots\to F(w,2)_u\to F(w,1)_u\to F(w,0)_u\to 0,
\end{equation}
where $F(w,2)_u$ is the sum of one-dimensional spaces indexed by $x$ such that
$x\leq w$, $x\leq u$ and $\ell(w)-\ell(x)=i$, and the differential is the
restriction of the differential from the BGG complex. In other words, this is
exactly the restriction of the BGG complex to $W_{\leq w}\cap W_{\leq u}$, and our goal is to minimize,
over all possible $u$, the absolute value of the rightmost degree in which this 
complex has non-zero homology.

Our setup is such that the vertex $w$ is placed at the homological position $0$.
In particular, the vertex $e$ is placed at position $-\ell(w)$. Taking
$u=e$, we get a complex concentrated in position $-\ell(w)$, which means that
$\ell(w)$ is an upper bound for our answer.

If $e\neq w\leq u$, then $W_{\leq w}\cap W_{\leq u}=W_{\leq w}$ which implies that
\eqref{eq-wm-5} is exact. Similarly, the case $e\neq u\leq w$ gives an exact complex
\eqref{eq-wm-5}. Therefore the interesting case to consider is $e\neq u$, $e\neq w$, 
and $u$ and $w$ are not comparable with respect to the Bruhat order.

As was pointed out to us by Axel Hultman, if there is a simple reflection $s$
such that $sw<w$ and $su<u$ or such that $ws<w$ and $us<u$, then 
the description of the differential in the BGG complex implies, by induction,
that \eqref{eq-wm-5} is exact (we will explain this in more detail
in Subsection~\ref{sec:motivation.7} below). Therefore the really interesting case is when
$u$ and $w$ neither have any common elements in their left descent sets nor 
any common elements in their right descent sets.

\subsection{$A_2$ example}\label{sec:motivation.5}
Consider $W$ of Weyl type $A_2$ with $S=\{s,t\}$. In this 
case we have $W=\{e,s,t,st,ts,w_0=sts=tst\}$.
The Gabriel quiver of the corresponding incidence algebra 
and the coefficients in the associated BGG complex look as follows:
\begin{displaymath}
\xymatrix@R=3.5mm@C=1mm{
&w_0\ar[rd]\ar[ld]&\\
st\ar[rrd]\ar[d]&&ts\ar[lld]\ar[d]\\
s\ar[rd]&&t\ar[ld]\\
&e&
}
\qquad \text{ and }\qquad
\xymatrix@R=3.5mm@C=1mm{
&w_0\ar@{-}[rd]^1\ar@{-}[ld]_1&\\
st\ar@{-}[rrd]_>>>1\ar@{-}[d]_{-1}&&ts\ar@{-}[lld]^>>>1\ar@{-}[d]^{-1}\\
s\ar@{-}[rd]_1&&t\ar@{-}[ld]^1\\
&e&
}
\end{displaymath}
Here is the list of simple modules over the incidence algebra:
\begin{displaymath}
\xymatrix@R=3.5mm@C=1mm{
&{\color{gray}0}\ar@[gray][rd]\ar@[gray][ld]&\\
{\color{gray}0}\ar@[gray][rrd]\ar@[gray][d]&&{\color{gray}0}\ar@[gray][lld]\ar@[gray][d]\\
{\color{gray}0}\ar@[gray][rd]&&{\color{gray}0}\ar@[gray][ld]\\
&\Bbbk&
}
\quad
\xymatrix@R=3.5mm@C=1mm{
&{\color{gray}0}\ar@[gray][rd]\ar@[gray][ld]&\\
{\color{gray}0}\ar@[gray][rrd]\ar@[gray][d]&&{\color{gray}0}\ar@[gray][lld]\ar@[gray][d]\\
\Bbbk\ar@[gray][rd]&&{\color{gray}0}\ar@[gray][ld]\\
&{\color{gray}0}&
}
\quad
\xymatrix@R=3.5mm@C=1mm{
&{\color{gray}0}\ar@[gray][rd]\ar@[gray][ld]&\\
{\color{gray}0}\ar@[gray][rrd]\ar@[gray][d]&&{\color{gray}0}\ar@[gray][lld]\ar@[gray][d]\\
{\color{gray}0}\ar@[gray][rd]&&\Bbbk\ar@[gray][ld]\\
&{\color{gray}0}&
}
\quad
\xymatrix@R=3.5mm@C=1mm{
&{\color{gray}0}\ar@[gray][rd]\ar@[gray][ld]&\\
\Bbbk\ar@[gray][rrd]\ar@[gray][d]&&{\color{gray}0}\ar@[gray][lld]\ar@[gray][d]\\
{\color{gray}0}\ar@[gray][rd]&&{\color{gray}0}\ar@[gray][ld]\\
&{\color{gray}0}&
}
\quad
\xymatrix@R=3.5mm@C=1mm{
&{\color{gray}0}\ar@[gray][rd]\ar@[gray][ld]&\\
{\color{gray}0}\ar@[gray][rrd]\ar@[gray][d]&&\Bbbk\ar@[gray][lld]\ar@[gray][d]\\
{\color{gray}0}\ar@[gray][rd]&&{\color{gray}0}\ar@[gray][ld]\\
&{\color{gray}0}&
}
\quad
\xymatrix@R=3.5mm@C=1mm{
&\Bbbk\ar@[gray][rd]\ar@[gray][ld]&\\
{\color{gray}0}\ar@[gray][rrd]\ar@[gray][d]&&{\color{gray}0}\ar@[gray][lld]\ar@[gray][d]\\
{\color{gray}0}\ar@[gray][rd]&&{\color{gray}0}\ar@[gray][ld]\\
&{\color{gray}0}&
}
\end{displaymath}
And here is the list of indecomposable projective modules over the incidence algebra
(all black arrows represent the identity linear transformations):
\begin{displaymath}
\xymatrix@R=3.5mm@C=1mm{
&{\color{gray}0}\ar@[gray][rd]\ar@[gray][ld]&\\
{\color{gray}0}\ar@[gray][rrd]\ar@[gray][d]&&{\color{gray}0}\ar@[gray][lld]\ar@[gray][d]\\
{\color{gray}0}\ar@[gray][rd]&&{\color{gray}0}\ar@[gray][ld]\\
&\Bbbk&
}
\quad
\xymatrix@R=3.5mm@C=1mm{
&{\color{gray}0}\ar@[gray][rd]\ar@[gray][ld]&\\
{\color{gray}0}\ar@[gray][rrd]\ar@[gray][d]&&{\color{gray}0}\ar@[gray][lld]\ar@[gray][d]\\
\Bbbk\ar[rd]&&{\color{gray}0}\ar@[gray][ld]\\
&\Bbbk&
}
\quad
\xymatrix@R=3.5mm@C=1mm{
&{\color{gray}0}\ar@[gray][rd]\ar@[gray][ld]&\\
{\color{gray}0}\ar@[gray][rrd]\ar@[gray][d]&&{\color{gray}0}\ar@[gray][lld]\ar@[gray][d]\\
{\color{gray}0}\ar@[gray][rd]&&\Bbbk\ar[ld]\\
&\Bbbk&
}
\quad
\xymatrix@R=3.5mm@C=1mm{
&{\color{gray}0}\ar@[gray][rd]\ar@[gray][ld]&\\
\Bbbk\ar[rrd]\ar[d]&&{\color{gray}0}\ar@[gray][lld]\ar@[gray][d]\\
\Bbbk\ar[rd]&&\Bbbk\ar[ld]\\
&\Bbbk&
}
\quad
\xymatrix@R=3.5mm@C=1mm{
&{\color{gray}0}\ar@[gray][rd]\ar@[gray][ld]&\\
{\color{gray}0}\ar@[gray][rrd]\ar@[gray][d]&&\Bbbk\ar[lld]\ar[d]\\
\Bbbk\ar[rd]&&\Bbbk\ar[ld]\\
&\Bbbk&
}
\quad
\xymatrix@R=3.5mm@C=1mm{
&\Bbbk\ar[rd]\ar[ld]&\\
\Bbbk\ar[rrd]\ar[d]&&\Bbbk\ar[lld]\ar[d]\\
\Bbbk\ar[rd]&&\Bbbk\ar[ld]\\
&\Bbbk&
}
\end{displaymath}
Finally, here is the list of indecomposable injective modules over the incidence algebra:
\begin{displaymath}
\xymatrix@R=3.5mm@C=1mm{
&\Bbbk\ar[rd]\ar[ld]&\\
\Bbbk\ar[rrd]\ar[d]&&\Bbbk\ar[lld]\ar[d]\\
\Bbbk\ar[rd]&&\Bbbk\ar[ld]\\
&\Bbbk&
}
\quad
\xymatrix@R=3.5mm@C=1mm{
&\Bbbk\ar[rd]\ar[ld]&\\
\Bbbk\ar@[gray][rrd]\ar[d]&&\Bbbk\ar[lld]\ar@[gray][d]\\
\Bbbk\ar@[gray][rd]&&{\color{gray}0}\ar@[gray][ld]\\
&{\color{gray}0}&
}
\quad
\xymatrix@R=3.5mm@C=1mm{
&\Bbbk\ar[rd]\ar[ld]&\\
\Bbbk\ar[rrd]\ar@[gray][d]&&\Bbbk\ar@[gray][lld]\ar[d]\\
{\color{gray}0}\ar@[gray][rd]&&\Bbbk\ar@[gray][ld]\\
&{\color{gray}0}&
}
\quad
\xymatrix@R=3.5mm@C=1mm{
&\Bbbk\ar@[gray][rd]\ar[ld]&\\
\Bbbk\ar@[gray][rrd]\ar@[gray][d]&&{\color{gray}0}\ar@[gray][lld]\ar@[gray][d]\\
{\color{gray}0}\ar@[gray][rd]&&{\color{gray}0}\ar@[gray][ld]\\
&{\color{gray}0}&
}
\quad
\xymatrix@R=3.5mm@C=1mm{
&\Bbbk\ar[rd]\ar@[gray][ld]&\\
{\color{gray}0}\ar@[gray][rrd]\ar@[gray][d]&&\Bbbk\ar@[gray][lld]\ar@[gray][d]\\
{\color{gray}0}\ar@[gray][rd]&&{\color{gray}0}\ar@[gray][ld]\\
&{\color{gray}0}&
}
\quad
\xymatrix@R=3.5mm@C=1mm{
&\Bbbk\ar@[gray][rd]\ar@[gray][ld]&\\
{\color{gray}0}\ar@[gray][rrd]\ar@[gray][d]&&{\color{gray}0}\ar@[gray][lld]\ar@[gray][d]\\
{\color{gray}0}\ar@[gray][rd]&&{\color{gray}0}\ar@[gray][ld]\\
&{\color{gray}0}&
}
\end{displaymath}

For $w=e$, we have $L_e=P_e$. Applying $\mathbb{S}$, we get the complex
$0\to I_e\to 0$ with $I_e$ at the homological position $0$. This implies
that $\mathbf{grade}(L_e)=0$.

For $w=s$, the projective resolution of $L_s$ is $0\to P_e\to P_s\to 0$. 
Applying $\mathbb{S}$, we get the complex $0\to I_e\to I_s\to 0$ with 
$I_s$ at the homological position $0$. Since the map $I_e\to I_s$
is surjective, we have that $\mathbf{grade}(L_s)=1$.
Similarly, $\mathbf{grade}(L_t)=1$.

For $w=st$, the projective resolution of $L_s$ is $0\to P_e\to P_s\oplus P_t\to P_{st}\to 0$. 
Applying $\mathbb{S}$, we get the complex 
\begin{displaymath}
0\to 
\xymatrix@R=3.5mm@C=1mm{
&\Bbbk\ar[rd]\ar[ld]&\\
\Bbbk\ar[rrd]\ar[d]&&{\color{red}\Bbbk}\ar[lld]\ar[d]\\
\Bbbk\ar[rd]&&\Bbbk\ar[ld]\\
&{\color{blue}\Bbbk}&
}
\to
\xymatrix@R=3.5mm@C=1mm{
&\Bbbk\ar[rd]\ar[ld]&\\
\Bbbk\ar@[gray][rrd]\ar[d]&&{\color{red}\Bbbk}\ar[lld]\ar@[gray][d]\\
\Bbbk\ar@[gray][rd]&&{\color{gray}0}\ar@[gray][ld]\\
&{\color{blue}0}&
}
\oplus
\xymatrix@R=3.5mm@C=1mm{
&\Bbbk\ar[rd]\ar[ld]&\\
\Bbbk\ar[rrd]\ar@[gray][d]&&{\color{red}\Bbbk}\ar@[gray][lld]\ar[d]\\
{\color{gray}0}\ar@[gray][rd]&&\Bbbk\ar@[gray][ld]\\
&{\color{blue}0}&
}
\to
\xymatrix@R=3.5mm@C=1mm{
&\Bbbk\ar@[gray][rd]\ar[ld]&\\
\Bbbk\ar@[gray][rrd]\ar@[gray][d]&&{\color{red}0}\ar@[gray][lld]\ar@[gray][d]\\
{\color{gray}0}\ar@[gray][rd]&&{\color{gray}0}\ar@[gray][ld]\\
&{\color{blue}0}&
}
\to 0
\end{displaymath}
with $I_{st}$ at the homological position $0$. The restriction of this
complex to the vertex ${\color{red}ts}$ (shown in {\color{red}red}) gives the complex
\begin{displaymath}
0\to \Bbbk\to \Bbbk\oplus \Bbbk\to 0\to 0, 
\end{displaymath}
supported at $W_{\leq st}\cap W_{\leq ts}=\{e,s,t\}$. It has non-zero homology 
at position $-1$. The only other restriction to a vertex resulting in a
non-zero homology is that to {\color{blue}$e$}
(shown in {\color{blue}blue}) which 
gives the complex
\begin{displaymath}
0\to \Bbbk\to 0\oplus 0\to 0\to 0, 
\end{displaymath}
supported at $W_{\leq st}\cap W_{\leq e}=\{e\}$. It has non-zero homology 
at position $-2$.  By taking the minimum of $1$ and $2$, we obtain $\mathbf{grade}(L_{st})=1$.
Similarly, $\mathbf{grade}(L_{ts})=1$.

For $w=w_0$, the projective resolution of $L_{w_0}$ 
is $0\to P_e\to P_s\oplus P_t\to P_{st}\oplus P_{ts}\to P_{w_0}\to 0$. 
Applying $\mathbb{S}$, we get the complex 
\begin{displaymath}
0\to 
\xymatrix@R=3.5mm@C=1mm{
&\Bbbk\ar[rd]\ar[ld]&\\
\Bbbk\ar[rrd]\ar[d]&&\Bbbk\ar[lld]\ar[d]\\
\Bbbk\ar[rd]&&\Bbbk\ar[ld]\\
&{\color{blue}\Bbbk}&
}
\to
\xymatrix@R=3.5mm@C=1mm{
&\Bbbk\ar[rd]\ar[ld]&\\
\Bbbk\ar@[gray][rrd]\ar[d]&&\Bbbk\ar[lld]\ar@[gray][d]\\
\Bbbk\ar@[gray][rd]&&{\color{gray}0}\ar@[gray][ld]\\
&{\color{blue}0}&
}
\oplus
\xymatrix@R=3.5mm@C=1mm{
&\Bbbk\ar[rd]\ar[ld]&\\
\Bbbk\ar[rrd]\ar@[gray][d]&&\Bbbk\ar@[gray][lld]\ar[d]\\
{\color{gray}0}\ar@[gray][rd]&&\Bbbk\ar@[gray][ld]\\
&{\color{blue}0}&
}
\to
\xymatrix@R=3.5mm@C=1mm{
&\Bbbk\ar@[gray][rd]\ar[ld]&\\
\Bbbk\ar@[gray][rrd]\ar@[gray][d]&&{\color{gray}0}\ar@[gray][lld]\ar@[gray][d]\\
{\color{gray}0}\ar@[gray][rd]&&{\color{gray}0}\ar@[gray][ld]\\
&{\color{blue}0}&
}
\oplus 
\xymatrix@R=3.5mm@C=1mm{
&\Bbbk\ar[rd]\ar@[gray][ld]&\\
{\color{gray}0}\ar@[gray][rrd]\ar@[gray][d]&&\Bbbk\ar@[gray][lld]\ar@[gray][d]\\
{\color{gray}0}\ar@[gray][rd]&&{\color{gray}0}\ar@[gray][ld]\\
&{\color{blue}0}&
}
\to
\xymatrix@R=3.5mm@C=1mm{
&\Bbbk\ar@[gray][rd]\ar@[gray][ld]&\\
{\color{gray}0}\ar@[gray][rrd]\ar@[gray][d]&&{\color{gray}0}\ar@[gray][lld]\ar@[gray][d]\\
{\color{gray}0}\ar@[gray][rd]&&{\color{gray}0}\ar@[gray][ld]\\
&{\color{blue}0}&
}
\to 0
\end{displaymath}
with $I_{w_0}$ at the homological position $0$. The restriction of this
complex to the vertex {\color{blue}$e$} (shown in {\color{blue}blue}) gives the complex
\begin{displaymath}
0\to \Bbbk\to 0\oplus 0\to 0\oplus 0\to 0\to 0, 
\end{displaymath}
supported at $W_{\leq w_0}\cap W_{\leq e}=\{e\}$. It has non-zero homology 
at position $-3$ and hence $\mathbf{grade}(L_{st})=3$.

To sum up, here are the values of the grade function in type $A_2$:
\begin{displaymath}
\begin{array}{c||c|c|c|c|c|c}
w&e&s&t&st&ts&w_0\\ \hline\hline
\mathbf{grade}(L_w)&0&1&1&1&1&3
\end{array}
\end{displaymath}

One could observe that these values coincide with the values of 
Lusztig's $\mathbf{a}$-function from \cite{lusztig} in this case.
We will come back to this observation in more detail at the end of the paper.

\subsection{Connection to Auslander regularity}\label{sec:motivation.6}

Our interest in the grades of simple modules stems from the theory of
Auslander regular algebras, see \cite{iyama_marczinzik}. 
It is shown in \cite{iyama_marczinzik}
that the incidence algebra of a lattice is Auslander regular if and only
if the lattice is distributive. The poset $(W,\leq)$ is not a lattice,
in general, and, outside type $A$ there are reasons why, generically,
the incidence algebra of $(W,\leq)$ is not Auslander regular. 
In type $A$ this question is still open 
(as was mentioned by Rene Marczinzik at a seminar talk in Uppsala
in March 2021) and the research presented in
this paper is originally motivated by that problem. 
Grades of simple modules are essential homological invariants
for this theory and they behave especially nicely for some
Auslander regular algebras, see \cite{iyama_marczinzik} for details.

\subsection{Matchings}\label{sec:motivation.7}

Let $Q$ be a convex subset of $W$ in the sense that $x,y\in Q$
with $x\leq y$ implies $[x,y]\subseteq Q$. 
Then we can restrict the BGG complex to its part supported
at $Q$; i.e., to the linear span of all vectors indexed by the elements in $Q$.
Let us denote this complex by $V_\bullet^Q$.

\begin{lemma}\label{lem-wm-31}
Assume that the poset $(Q,\leq)$ admits a filtration
\begin{equation}\label{eq-wm-311-1}
\emptyset=Q_0\subset Q_1\subset\dots\subset Q_l=Q 
\end{equation}
by coideals such that each $Q_i\setminus Q_{i-1}=\{x_i,y_i\}$, 
where $x_i<y_i$ and $\ell(x_i)=\ell(y_i)-1$. Then $V_\bullet^Q$ is exact. 
\end{lemma}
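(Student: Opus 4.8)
The plan is to induct on the length $l$ of the filtration, peeling off the two top elements $\{x_l, y_l\}$ at each step. The base case $l=0$ is the empty complex, which is exact. For the inductive step, write $Q' := Q_{l-1}$, which is a convex subset of $W$ carrying the shorter filtration $\emptyset = Q_0 \subset \dots \subset Q_{l-1} = Q'$, so by the inductive hypothesis $V_\bullet^{Q'}$ is exact. I then want to compare $V_\bullet^{Q}$ with $V_\bullet^{Q'}$. Since $Q_{l-1}$ is a coideal in $Q_l = Q$, the set $Q' $ is a subcomplex-indexing subset in the appropriate sense: restricting the BGG differential, the span of the basis vectors indexed by $Q'$ is a subcomplex of $V_\bullet^{Q}$ (coideals are closed upward, and the BGG differential $d_i$ sends $v_x$ to a combination of $v_y$ with $y \geq x$, so it cannot leave a coideal). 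Hence there is a short exact sequence of complexes
\begin{equation}\label{eq-ses-wm}
0 \to V_\bullet^{Q'} \to V_\bullet^{Q} \to C_\bullet \to 0,
\end{equation}
where $C_\bullet$ is the quotient complex, whose underlying graded vector space is spanned by $v_{x_l}$ and $v_{y_l}$, placed in homological degrees determined by $\ell(x_l)$ and $\ell(y_l) = \ell(x_l)+1$ (adjacent degrees).

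The heart of the argument is to show $C_\bullet$ is exact. It is a two-term complex $0 \to \Bbbk v_{x_l} \to \Bbbk v_{y_l} \to 0$ (in adjacent degrees) whose single differential is the $v_{x_l}$-$v_{y_l}$-coefficient of the BGG map $d$, induced on the quotient. Since $x_l < y_l$ and $\ell(y_l) - \ell(x_l) = 1$, that coefficient is $\pm 1$ by the listed properties of the BGG complex (it is non-zero precisely because $x_l \le y_l$, and equals $\pm 1$). Therefore the differential of $C_\bullet$ is an isomorphism, and $C_\bullet$ is exact. Now the long exact sequence in homology associated to \eqref{eq-ses-wm}, together with the exactness of $V_\bullet^{Q'}$ (inductive hypothesis) and of $C_\bullet$, forces $V_\bullet^{Q}$ to be exact as well. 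This completes the induction.

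The main obstacle I anticipate is the bookkeeping in justifying that \eqref{eq-ses-wm} really is a short exact sequence of \emph{complexes} — i.e. that the differentials are compatible — which comes down to two points: first, that $Q'$ being a coideal of $Q$ makes $V_\bullet^{Q'}$ a genuine subcomplex (upward-closedness of coideals versus the support of the BGG differential), and second, that the induced differential on the rank-$2$ quotient $C_\bullet$ is exactly the single $\pm1$ coefficient between $x_l$ and $y_l$ and nothing else. Both are immediate from the three bulleted properties of the BGG complex quoted before Proposition~\ref{propwm-2}, but they should be stated carefully. Everything else — the long exact sequence, the base case — is routine homological algebra.
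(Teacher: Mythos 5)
Your proof is correct and is essentially the paper's own argument: the paper filters $V_\bullet^Q$ by the subcomplexes spanned by the coideals $Q_i$ and observes that each two-term subquotient spanned by $\{x_i,y_i\}$ is acyclic because the BGG coefficient between a covering pair is $\pm 1$; your induction on $l$ with the short exact sequence $0\to V_\bullet^{Q_{l-1}}\to V_\bullet^{Q}\to C_\bullet\to 0$ and the long exact sequence is exactly this filtration argument unwound one step at a time. (One cosmetic quibble: since the $Q_i$ are coideals, the pair $\{x_l,y_l\}$ removed last lies at the bottom of $Q$, not the top, but this does not affect the argument.)
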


\begin{proof}
The filtration \eqref{eq-wm-311-1} gives rise to a filtration of 
$V_\bullet^Q$ by subcomplexes with subquotients of the form
\begin{equation}\label{eq-wm-312}
0\to \mathbb{C}\langle y_i\rangle \to \mathbb{C}\langle x_i\rangle\to 0.
\end{equation}
From the definition of the BGG complex we see that the map 
$\mathbb{C}\langle y_i\rangle \to \mathbb{C}\langle x_i\rangle$
is non-zero and therefore \eqref{eq-wm-312} is homotopic to zero.
The claim follows.
\end{proof}

We will call the decomposition of $Q$ into the subsets $\{x_i,y_i\}$ 
given by Lemma~\ref{lem-wm-31} a {\em perfect matching}. 
The already mentioned observation by Axel Hultman
was that existence of a simple reflection $s$ such that 
$sv<v$ and $sw<w$ obviously implies that $B(v)\cap B(w)$ has a perfect matching
by the pairs $\{x,sx\}$. In particular, $V_\bullet^{B(v)\cap B(w)}$ is exact
in this case. Similarly in the case of right descents; i.e., for $vs<v$ and $ws<w$.

\begin{lemma}\label{lem-wm-32}
Assume that the poset $(Q,\leq)$ admits a filtration
$$\emptyset=Q_0\subset Q_1\subset\dots\subset Q_l=Q $$
by coideals such that each $Q_i\setminus Q_{i-1}=\{x_i,y_i\}$, 
where $x_i<y_i$ and $\ell(x_i)=\ell(y_i)-1$, with the exception
of one $i$ for which we get  a singleton $z$. Then $V_\bullet^Q$ 
has exactly one non-zero homology, namely in the homological 
position $\ell(z)$ and this homology is one-dimensional. 
\end{lemma}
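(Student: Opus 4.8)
The plan is to mimic the proof of Lemma~\ref{lem-wm-31} but to keep track of the one exceptional step in the filtration. First I would use the filtration to produce a filtration of the complex $V_\bullet^Q$ by subcomplexes $V_\bullet^{Q_0}\subseteq V_\bullet^{Q_1}\subseteq\dots\subseteq V_\bullet^{Q_l}$; this makes sense because each $Q_i$ is a coideal, and restricting the BGG complex to a coideal is again a subcomplex of the restriction to any larger coideal (the differential only sends a basis vector $v_x$ to vectors $v_y$ with $x\le y$, hence coideals give subcomplexes when we read the complex homologically the right way — and dually ideals give quotient complexes; I would be careful to state this consistently with the sign conventions already fixed in Subsection~\ref{sec:motivation.4}). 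The successive subquotients $V_\bullet^{Q_i}/V_\bullet^{Q_{i-1}}$ are, for all but one value of $i$, the two-term complexes $0\to\mathbb{C}\langle y_i\rangle\to\mathbb{C}\langle x_i\rangle\to 0$ appearing in~\eqref{eq-wm-312}, which are contractible since the BGG coefficient between $y_i$ and $x_i$ is $\pm1\neq 0$; and for the one exceptional value $i_0$ it is the one-term complex $0\to\mathbb{C}\langle z\rangle\to 0$ with $z$ sitting in homological position $\ell(z)$ (recall the vertex of length $j$ sits in position $-j$, or in position $\ell(w)-j$ after the shift placing $w$ at $0$; I would use whichever convention is in force, and here it is stated as position $\ell(z)$).

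Next I would run the long exact sequences in homology associated to the short exact sequences of complexes $0\to V_\bullet^{Q_{i-1}}\to V_\bullet^{Q_i}\to V_\bullet^{Q_i}/V_\bullet^{Q_{i-1}}\to 0$. Since every subquotient except the $i_0$-th is acyclic, an easy induction on $i$ shows that $H_\bullet(V_\bullet^{Q_i})$ is zero for $i<i_0$, equals $H_\bullet$ of the exceptional subquotient — i.e. one-dimensional, concentrated in position $\ell(z)$ — for $i=i_0$, and then remains unchanged for all $i>i_0$ because tensoring the long exact sequence with an acyclic subquotient forces $H_\bullet(V_\bullet^{Q_{i-1}})\xrightarrow{\ \sim\ }H_\bullet(V_\bullet^{Q_i})$. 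Taking $i=l$ gives the claim: $V_\bullet^Q=V_\bullet^{Q_l}$ has exactly one non-zero homology group, one-dimensional, in homological position $\ell(z)$.

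The one point that needs a little care — and which I expect to be the main (mild) obstacle — is the ordering of the exceptional step: if $i_0$ is not the last index, one must check that passing from $Q_{i_0}$ to $Q_{i_0+1},\dots,Q_l$ really does preserve the homology rather than, say, killing it or creating a new class in a different degree. This is exactly where acyclicity of the later subquotients is used in the long exact sequence, so there is no genuine difficulty, but it is worth spelling out that the connecting maps in/out of an acyclic complex vanish, giving the isomorphisms $H_\bullet(V_\bullet^{Q_{i-1}})\cong H_\bullet(V_\bullet^{Q_i})$ for $i>i_0$. An alternative, cleaner phrasing avoids induction entirely: the filtration gives a spectral sequence whose $E_1$-page has a single nonzero entry (coming from the exceptional singleton), so it degenerates and computes $H_\bullet(V_\bullet^Q)$ directly; I would probably include the elementary long-exact-sequence argument in the text since it is self-contained and matches the style of Lemma~\ref{lem-wm-31}.
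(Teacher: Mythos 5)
Your proposal is correct and follows essentially the same route as the paper: filter $V_\bullet^Q$ by the subcomplexes coming from the coideals $Q_i$, observe that every matched pair yields a contractible two-term subquotient (the BGG coefficient being $\pm1$), and conclude that the homology is one-dimensional and concentrated at the unmatched singleton $z$. The paper states this in one line by analogy with Lemma~\ref{lem-wm-31}; your long-exact-sequence (or spectral-sequence) bookkeeping just makes the same argument explicit.
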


\begin{proof}
Similarly to the proof of  Lemma~\ref{lem-wm-31}, all matched pairs will
give subquotient complexes that are homotopic to zero, so the one-dimensional 
homology will be concentrated at the unique unmatched singleton. 
\end{proof}

We will call a decomposition of $Q$ given by Lemma~\ref{lem-wm-32}
an {\em almost perfect matching}. 

\section{Matchings in intersections}\label{sec:matchings}

\subsection{Preliminaries}\label{sec:matchings.1}

Throughout this section, let $v \in \mf{S}_n$ be a boolean permutation. 
As discussed in Section~\ref{sec:motivation}, we want to find a permutation 
$w \in \mf{S}_n$ such that there is an almost perfect matching 
of the elements of $B(v) \cap B(w)$, in which the singleton element 
is of highest possible rank. Such a $w$ will be called an 
\emph{optimal partner} for $v$, and that highest possible rank 
is the \emph{optimal rank} of $v$, denoted $\optimalrank(v)$. 
Note that there always exists an intersection $B(v) \cap B(w)$ that 
can be almost perfectly matched, because we could let $w$ be the 
identity permutation. So $\optimalrank(v)$ always exists.

We will also show in Proposition~\ref{prop:bounding optimal rank} that, for any $w$, the poset $B(v) \cap B(w)$
has either a perfect matching or an almost perfect matching.
We begin by formalizing a fact mentioned in the previous section.

\begin{lemma}\label{lem:optimal partner can't be larger than v}
An optimal partner for $v\neq e$ can never be greater than or equal to $v$ in the Bruhat order.
\end{lemma}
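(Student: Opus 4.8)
The plan is to show directly that if $v \leq w$ (with $v \neq e$), then $B(v) \cap B(w)$ cannot host an almost perfect matching whose singleton has rank as large as an optimal partner forces, because in fact the intersection complex is already \emph{exact} — so $w$ yields ``optimal rank'' $-\infty$ in the relevant sense, certainly not the maximum. Concretely, if $v \leq w$, then $B(v) \cap B(w) = B(v)$, since $B(v)$ is an order ideal contained in $B(w)$ by Theorem~\ref{thm:subword property}. So the restricted BGG complex $V_\bullet^{B(v)\cap B(w)}$ is the restriction of the BGG complex to the principal ideal $B(v)$, which is exact (this is exactly the property of the BGG complex recorded in Section~\ref{sec:motivation.4}: its restriction to a principal ideal is exact, provided that ideal is not the ideal of the minimum element, which is guaranteed here by $v \neq e$). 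An exact complex has \emph{no} nonzero homology at all, hence in particular no singleton appears in any almost perfect matching; by Lemma~\ref{lem-wm-31} such a $B(v)$ admits a genuine perfect matching, not an almost perfect one.

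The one subtlety is the precise bookkeeping of what ``optimal partner'' means: $\optimalrank(v)$ is defined as the highest rank of the singleton over all $w$ for which $B(v)\cap B(w)$ \emph{has} an almost perfect matching, and an optimal partner is a $w$ attaining it. So I need to observe that there does exist \emph{some} $w$ (e.g.\ $w=e$, as noted in the paragraph preceding the lemma, or any $w$ chosen with non-matching orientations as in the examples) for which $B(v)\cap B(w)$ genuinely has an almost perfect matching with a nonempty singleton, giving $\optimalrank(v) \geq 0 > -\infty$. Since $v \neq e$, one can in fact take $w$ to be a simple reflection $\s_k$ with $k \in \supp(v)$: then $B(v)\cap B(\s_k) = \{e, \s_k\}$, whose unique almost-perfect-matching decomposition has the singleton $\s_k$ of rank $1$, so $\optimalrank(v)\geq 1$. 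A $w$ with $v\leq w$ gives an exact complex and therefore contributes nothing to this supremum, so it cannot be an optimal partner.

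I should also dispose of the degenerate possibility $v = w$: this is the case $v \leq w$ with equality, and the argument above still applies verbatim since $B(v)\cap B(v)=B(v)$. Thus in all cases $v \leq w$ forces $B(v)\cap B(w)=B(v)$, an exact complex, excluding $w$ from being an optimal partner.

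The main obstacle, such as it is, is purely definitional: making sure that ``admits an almost perfect matching'' is read as an existence hypothesis that simply fails when the complex is exact (so that such $w$ are excluded from the competition), rather than something one must coerce every $w$ into satisfying. Once that is clear, the mathematical content reduces to the single observation $v \leq w \implies B(v)\cap B(w) = B(v)$ together with the exactness of the BGG complex restricted to a principal ideal.
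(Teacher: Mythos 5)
Your core argument is exactly the paper's: if $v\leq w$ then $B(v)\cap B(w)=B(v)$, and a principal ideal with more than one element has zero homology (equivalently, by Lemma~\ref{lem-wm-32}, it cannot admit an almost perfect matching), so such a $w$ cannot be an optimal partner; the existence of \emph{some} admissible $w$ is already settled by $w=e$, as the paper notes just before the lemma. Two of your side remarks are off, though neither affects the argument: the two-element ideal $B(v)\cap B(\s_k)=\{e,\s_k\}$ admits a perfect matching, not an almost perfect one (an almost perfect matching needs an odd number of elements), so it does not show $\optimalrank(v)\geq 1$; and Lemma~\ref{lem-wm-31} is stated in the direction ``matching $\Rightarrow$ exact,'' so citing it for ``exact $\Rightarrow$ perfect matching'' reverses the implication --- the statement you actually need, that exactness rules out an almost perfect matching, follows from Lemma~\ref{lem-wm-32}, which you also invoke.
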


\begin{proof}
A principal order ideal containing more than one element always has zero homology. If $v \in B(w)$, then $B(v) \cap B(w) = B(v)$, which would mean that $w$ is not an optimal partner for $v$.
\end{proof}

As an immediate corollary, we record a property of the optimal rank function.

\begin{corollary}\label{cor:optimal rank < l(v)}
For any $v$ that is not the identity permutation, $\optimalrank(v) < \ell(v)$.
\end{corollary}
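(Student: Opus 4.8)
The plan is to derive this directly from Lemma~\ref{lem:optimal partner can't be larger than v}, exactly as the phrase ``immediate corollary'' suggests, so the argument will be short. First I would unwind the definitions. Let $w$ be an optimal partner for $v\neq e$, and let $z$ be the singleton element of the corresponding almost perfect matching of $B(v)\cap B(w)$, so that $\ell(z)=\optimalrank(v)$. Since $z\in B(v)\cap B(w)\subseteq B(v)$, we have $z\leq v$, and hence $\ell(z)\leq\ell(v)$; in particular $\optimalrank(v)\leq\ell(v)$, so it only remains to rule out equality. (I note in passing that this maximum is attained: the paragraph preceding the corollary observes that the identity permutation already exhibits an almost perfect matching of $B(v)\cap B(e)$, and singleton ranks are bounded by $\ell(v)$, so the supremum defining $\optimalrank(v)$ is a genuine maximum realized by some optimal partner.)

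Second, I would argue by contradiction that $\optimalrank(v)\neq\ell(v)$. If $\ell(z)=\ell(v)$, then $z\leq v$ together with $\ell(z)=\ell(v)$ forces $z=v$; in particular $v\in B(w)$, i.e.\ $v\leq w$. But Lemma~\ref{lem:optimal partner can't be larger than v} asserts that an optimal partner for $v\neq e$ can never be greater than or equal to $v$ in the Bruhat order, a contradiction. Therefore $\optimalrank(v)=\ell(z)<\ell(v)$, as claimed.

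I do not anticipate any real obstacle: the entire content is carried by Lemma~\ref{lem:optimal partner can't be larger than v}, and the only conceptual point to be careful about — that the quantity $\optimalrank(v)$ is well-defined as a maximum rather than merely a supremum — has already been addressed in the surrounding exposition, so it can simply be cited.
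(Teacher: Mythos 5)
Your argument is correct and is exactly the deduction the paper leaves implicit when it calls this an ``immediate corollary'' of Lemma~\ref{lem:optimal partner can't be larger than v}: the unmatched singleton lies in $B(v)\cap B(w)\subseteq B(v)$, and it cannot equal $v$ since that would force $v\le w$, contrary to the lemma, so its length is strictly less than $\ell(v)$. No gaps; the side remark that $\optimalrank(v)$ is an attained maximum is harmless (and automatic, since $\mf{S}_n$ is finite).
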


Recall Lemma~\ref{lem:commuting generators make product poi}, which gives a way to 
decompose order ideals based on their commuting subsets of support. 
Suppose, for the moment, that $\supp(v) = X \sqcup Y$ as in that lemma. 
For $\rw{s} \in R(w)$, define $\rw{s_X}$ and $\rw{s_Y}$ accordingly, and we 
then observe that $B(v) \cap B(w)$ would be isomorphic to 
$(B(\rw{s_X}) \cap B(w)) \times (B(\rw{s_Y}) \cap B(w))$. Therefore, if there is 
a perfect  matching of $B(\rw{s_X}) \cap B(w)$, then we can construct a 
perfect matching of $B(v) \cap B(w)$, as a product of the matching filtration of
$B(\rw{s_X}) \cap B(w)$ with any filtration of $B(\rw{s_Y}) \cap B(w)$
by coideals with singleton subquotients. Therefore, we can often assume that $\supp(v) = [1,n-1]$.

\subsection{Runs}\label{sec:matchings.2}

We begin with an influential special case.

\begin{definition}
A sequence of consecutive integers that is either increasing or decreasing is a \emph{run}.
\end{definition}

\begin{lemma}\label{lem:runs}
Suppose that the run $\rw{a(a+1)\cdots (a+b)}$ (resp., $\rw{(a+b)\cdots (a+1)a}$) is a reduced word for $v$. Then an optimal partner for $v$ is
$$\rw{(a+1)(a+2) \cdots (a+b) a (a+1) \cdots (a+b-1)}$$
(resp., $\rw{(a+b-1)\cdots (a+1)a (a+b)\cdots (a+2)(a+1)}$) if $b>0$, and an optimal partner is the identity if $b=0$.
\end{lemma}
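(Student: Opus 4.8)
The plan is to reduce, via the product decomposition of Lemma~\ref{lem:commuting generators make product poi}, to the case where $v$ itself is a single run, and then to compute $B(v) \cap B(w)$ explicitly for the proposed partner $w$ using Corollary~\ref{cor:intersecting with boolean elements}. First I would observe that it suffices to treat the increasing run $v = \rw{a(a+1)\cdots(a+b)}$, since the decreasing case follows by conjugating by $w_0$, which reverses one-line notation and reverses reduced words; and the case $b=0$ is trivial since then $v = \s_a$ has $\optimalrank(v) = 0$ by Corollary~\ref{cor:optimal rank < l(v)}, achieved by the identity partner. So assume $b > 0$. Write $w := \rw{(a+1)(a+2)\cdots(a+b)\,a\,(a+1)\cdots(a+b-1)}$ and check first that this is a reduced word (it is a product of two runs, and one checks it has the claimed length $2b$, e.g.\ by computing its one-line notation or by noting it equals $\s_{a+1}\cdots\s_{a+b}\cdot\s_a\cdots\s_{a+b-1}$ which one recognizes). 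Note $w$ is boolean by Theorem~\ref{thm:boolean characterization} if it has no repeated letters — but it does repeat letters $a+1,\dots,a+b-1$, so $w$ is \emph{not} boolean in general; hence I must use Proposition~\ref{thm-int-part1} (or Corollary~\ref{cor:intersecting with elements that always have j=1}) rather than Corollary~\ref{cor:intersecting with boolean elements}.

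Next I would identify $\supp(v) \cap \supp(w)$ and the orientations. We have $\supp(v) = [a, a+b]$ and $\supp(w) = [a, a+b]$, so the supports agree. For the pair $\{a, a+1\}$: in $v$, $a$ appears before $a+1$ (increasing orientation); in $w$, the letter $a+1$ appears first, then $a$ appears, then $a+1$ again — so $a$ and $a+1$ are interlaced in $w$. Hence the orientations of $\{a,a+1\}$ do not match, forcing $a \in \mathcal{V}(v,w)$. For a pair $\{k, k+1\}$ with $a < k < k+1 \le a+b$: in $v$ the orientation is increasing; in $w$, I need to check whether $k, k+1$ are interlaced or have a consistent orientation, and I expect they are interlaced as well (both appear in the first run $\rw{(a+1)\cdots(a+b)}$ and again, for the lower one, in the second run), so their orientations also fail to match and $k \in \mathcal{V}(v,w)$. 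The upshot should be $\mathcal{V}(v,w) = [a, a+b-1]$ — every letter except the top one $a+b$ — while the pair $\{a+b-1, a+b\}$ should match or $a+b$ should otherwise survive. I would verify carefully that the only minimal forbidden subwords of $v$ (the elements of $\mathcal{W}(v,w)$) all have $j=1$, so that Corollary~\ref{cor:intersecting with elements that always have j=1} applies and the maximal elements of $B(v)\cap B(w)$ are the subwords of $\rw{s}$ with support $\{a+b\} \cup X$ for $X \in \mathscr{Q}([a,a+b-1])$.

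Then I would invoke the matching machinery of Subsection~\ref{sec:motivation.7}. By construction (or by a direct descent computation) I expect $w$ to have been chosen precisely so that $B(v) \cap B(w)$ has an almost perfect matching whose unmatched singleton has rank equal to the number of runs of $v$, which for a single run $v$ of length $b+1$ should be $1$ — wait, more precisely I should compute $\optimalrank(v)$ for a single run directly: since $v$ has exactly one run, the conjectural formula (runs $= \optimalrank$, to be established later in the paper) predicts $\optimalrank(v) = 1$ when $b \geq 1$. I would confirm that $B(v) \cap B(w)$ has a singleton of rank $1$ in some almost perfect matching — the maximal selfish subsets of $[a,a+b-1]$ together with $\{a+b\}$ form a poset whose structure I can match explicitly, peeling off coideals two elements at a time and leaving one length-$1$ element $\{a+b\}$ (i.e.\ the reduced word $\rw{(a+b)}$) unmatched — and, crucially, that \emph{no} partner does better: by Corollary~\ref{cor:optimal rank < l(v)} we already know $\optimalrank(v) < \ell(v) = b+1$, but I need the sharper bound $\optimalrank(v) \le 1$. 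The honest way to get this is to defer to Proposition~\ref{prop:bounding optimal rank} and Theorem~\ref{thm:optimal rank and partner for boolean} (which bound the optimal rank by the number of runs in general) — but since this lemma is stated \emph{before} those, I should instead give a self-contained argument: any $w$ for which $B(v)\cap B(w) \neq B(v)$ must fail to contain $v$, hence (since $v$ is a single increasing run) must "break" the run somewhere, and a short analysis of which subwords of a run $\rw{a\cdots(a+b)}$ can be order ideals forces the intersection to be a union of smaller-run ideals with an almost perfect matching whose singleton has rank $\le 1$.

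The main obstacle I anticipate is exactly this last point: proving \emph{optimality} (that no partner $w$ achieves a singleton of rank $> 1$, equivalently $\optimalrank(v) = 1$ for a single run) without circularly appealing to the later general theorems. I expect the cleanest route is a direct combinatorial lemma: if $v = \rw{a(a+1)\cdots(a+b)}$ and $u \le v$ with $u \le w$ but $v \not\le w$, then $u$ corresponds to deleting at least one letter from the run, and such an ideal $B(u)$ has a perfect matching unless $u$ has length $\le 1$; combined with the structure of $B(v)\cap B(w)$ as a union of such ideals, an inclusion–exclusion / Euler-characteristic argument on the restricted BGG complex shows the top surviving homology sits in degree $\le 1$. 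A secondary, purely bookkeeping obstacle is the orientation computation for $w$ (confirming $\mathcal{V}(v,w) = [a,a+b-1]$ and that all minimal forbidden subwords have $j=1$), which is routine via Theorem~\ref{thm:detecting no interlace} but needs care at the boundary letters $a$ and $a+b$.
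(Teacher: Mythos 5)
Your proposal goes wrong in two linked places, and the step you flag as the ``main obstacle'' is actually a symptom of the second error rather than a real difficulty. First, the computation of $B(v)\cap B(w)$ for the proposed partner is incorrect. Taking $a=1$, in $w=\rw{23\cdots(1+b)\,12\cdots b}$ every consecutive pair $\{k,k+1\}\subseteq\supp(v)$ is \emph{interlaced} (each letter $k\le b$ occurs in both runs), and interlaced orientations match everything, so $\mathcal{V}(v,w)=\emptyset$; moreover $w$ is not boolean for $b\ge 2$ and the unique element of $\mathcal{W}(v,w)$ is the full run itself, which has $j=b>1$, so neither Corollary~\ref{cor:intersecting with boolean elements} nor Corollary~\ref{cor:intersecting with elements that always have j=1} applies. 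The correct description, which follows directly from Theorem~\ref{thm:subword property} (or Proposition~\ref{thm-int-part1}), is simply $B(v)\cap B(w)=B(v)\setminus\{v\}$: every proper subword of the run, being a commuting product of a block of letters $\le k-1$ and a block of letters $\ge k+1$ for some omitted $k$, embeds in $\rw{23\cdots(1+b)}\cdot\rw{12\cdots b}$, while $v$ itself does not. In particular the maximal elements are \emph{all} corank-one subwords, not the words with support $\{a+b\}\cup X$, $X\in\mathscr{Q}([a,a+b-1])$.

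Second, you have conflated $\optimalrank(v)$ with $\run(v)$: the relation established later is $\optimalrank(v)=\ell(v)-\run(v)$, so for a single run the optimal rank is $\ell(v)-1=b$, not $1$ (cf.\ Corollary~\ref{cor:optimal rank of runs}), and the grade---not the rank of the singleton---is what equals the number of runs. Because of this, the bound $\optimalrank(v)\le 1$ you propose to prove is false for $b\ge 2$, and the elaborate Euler-characteristic argument you sketch for it cannot be repaired. Once the intersection is identified as $B(v)\setminus\{v\}$, the genuine content is to produce an almost perfect matching of it whose singleton has rank $\ell(v)-1$: match $z\leftrightarrow \s_a z$ using the coset decomposition of $B(v)$ under the parabolic subgroup generated by $\s_a$ (a filtration by coideals with the cosets as subquotients), then remove $v$; the unique unmatched element is $\s_a v$, of length $\ell(v)-1$. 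Optimality is then immediate from Corollary~\ref{cor:optimal rank < l(v)}, since no singleton can have rank $\ge\ell(v)$ --- no sharper upper bound is needed, contrary to what your proposal assumes. Your reductions (to $a=1$, to the increasing case via $w_0$-conjugation or inverses, and the $b=0$ case) are fine.
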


\begin{proof}
It is sufficient to consider $a = 1$.

The case $b = 0$ follows from Lemma~\ref{lem:optimal partner can't be larger than v}.

Now suppose that $b > 0$, and, without loss of generality, that $v = \rw{12\cdots (1+b)}$. Consider the permutation $w := \rw{23\cdots (1+b) 12\cdots b}$. Using Theorem~\ref{thm:subword property}, we see that
$$B(v) \cap B(w) = B(v) \setminus \{v\}.$$
The set $B(v)$ is a union of left cosets for the parabolic subgroup generated by $\sigma_1$. 
We can consider the restriction of the Bruhat order to the set of minimal coset representatives.
Any filtration of the latter poset by coideals with singleton subquotients extends in the obvious way to
a filtration of $B(v)$ by coideals with subquotients being exactly the cosets. This gives a
perfect matching for $B(v)$.  Removing $v$, which must belong to the first coset in the
filtration, we obtain an almost perfect matching for $B(v) \setminus \{v\}$. The unmatched
element $\sigma_1v$ has length $\ell(v) - 1$, and hence this must be the optimal rank, 
by Corollary~\ref{cor:optimal rank < l(v)}. Thus $w$ must be an optimal partner for $v$.
\end{proof}

We demonstrate Lemma~\ref{lem:runs} with an example.

\begin{example}
Consider $v = 51234 = \rw{4321}$. Then $w = 45123 = \rw{321432}$ is an optimal partner for $v$. The poset $B(v) \cap B(w)$ is shown in Figure~\ref{fig:51234 example}, with thick lines indicating the almost perfect matching from Lemma~\ref{lem:runs}.
\begin{figure}[htbp]
\begin{tikzpicture}[scale=.8]
\fill (0,0) coordinate (e) circle (2pt) node[below] {$e$};
\fill (-4.5,2) circle (2pt) coordinate (1) node[below left] {$\rw{1}$};
\fill (-1.5,2) circle (2pt) coordinate (2) node[below left] {$\rw{2}$};
\fill (1.5,2) circle (2pt) coordinate (3) node[below right] {$\rw{3}$};
\fill (4.5,2) circle (2pt) coordinate (4) node[below right] {$\rw{4}$};
\fill (-7.5,4) circle (2pt) coordinate (21) node[left] {$\rw{21}$};
\fill (-4.5,4) circle (2pt) coordinate (31) node[left] {$\rw{31}$};
\fill (-1.5,4) circle (2pt) coordinate (41) node[left] {$\rw{41}$};
\fill (1.5,4) circle (2pt) coordinate (32) node[right] {$\rw{32}$};
\fill (4.5,4) circle (2pt) coordinate (42) node[right] {$\rw{42}$};
\fill (7.5,4) circle (2pt) coordinate (43) node[right] {$\rw{43}$};
\fill (-4.5,6) circle (2pt) coordinate (321) node[above] {$\rw{321}$};
\fill (-1.5,6) circle (2pt) coordinate (421) node[above] {$\rw{421}$};
\fill (1.5,6) circle (2pt) coordinate (431) node[above] {$\rw{431}$};
\fill (4.5,6) circle (2pt) coordinate (432) node[above] {$\rw{432}$};
\foreach \x in {1,2,3,4} {\draw (e) -- (\x);}
\foreach \x in {21,31,41} {\draw (1) -- (\x);}
\foreach \x in {21,32,42} {\draw (2) -- (\x);}
\foreach \x in {31,32,43} {\draw (3) -- (\x);}
\foreach \x in {41,42,43} {\draw (4) -- (\x);}
\foreach \x in {21,31,32} {\draw (321) -- (\x);}
\foreach \x in {21,41,42} {\draw (421) -- (\x);}
\foreach \x in {31,41,43} {\draw (431) -- (\x);}
\foreach \x in {32,42,43} {\draw (432) -- (\x);}
\draw[line width=3pt] (e) -- (1);
\draw[line width=3pt] (2) -- (21);
\draw[line width=3pt] (3) -- (31);
\draw[line width=3pt] (4) -- (41);
\draw[line width=3pt] (32) -- (321);
\draw[line width=3pt] (42) -- (421);
\draw[line width=3pt] (43) -- (431);
\end{tikzpicture}
\caption{A matching of the elements of $B(\rw{432}) \cap B(\rw{321432})$, in which only $\rw{432}$ is unmatched.}\label{fig:51234 example}
\end{figure}
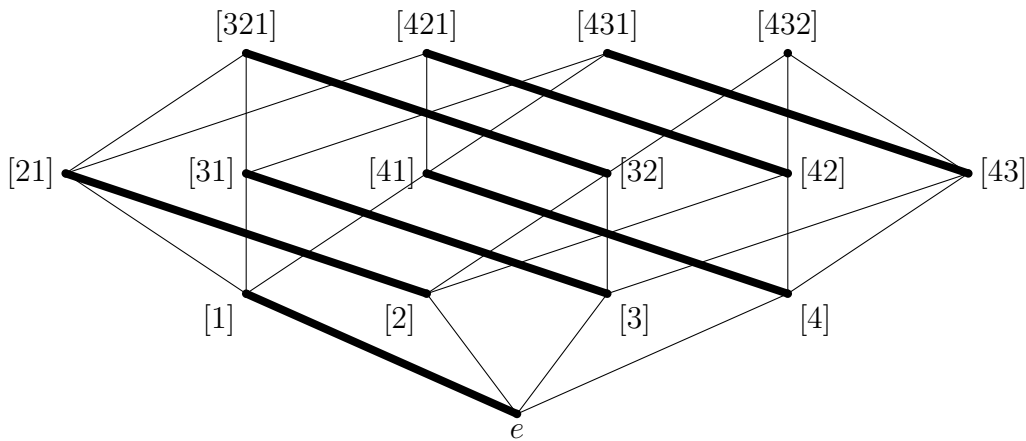
\end{example}

The cases $b = 0$ and $b > 0$ in the previous lemma share an important property.

\begin{corollary}\label{cor:optimal rank of runs}
If $v$ is boolean and has a reduced word that is a run, then $\optimalrank(v) = \ell(v) - 1$.
\end{corollary}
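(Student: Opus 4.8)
The statement is an immediate consequence of Lemma~\ref{lem:runs} together with Corollary~\ref{cor:optimal rank < l(v)}, and the plan is simply to unwind what those two results give. Write a reduced word for $v$ as a run $\rw{a(a+1)\cdots(a+b)}$ (the decreasing case is symmetric), so that $\ell(v) = b+1$.

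\textbf{Step 1: Produce a witness of rank $\ell(v)-1$.} If $b = 0$, then $v$ is a single simple reflection, and by Lemma~\ref{lem:runs} the identity $e$ is an optimal partner; since $B(v)\cap B(e) = \{e\}$ is a single vertex, its restricted BGG complex is concentrated in one homological degree and has one-dimensional (hence nonzero) homology there, and $e$ has rank $0 = \ell(v)-1$. If $b > 0$, Lemma~\ref{lem:runs} exhibits an explicit optimal partner $w$ for which $B(v)\cap B(w)$ admits an almost perfect matching whose unmatched singleton is $\sigma_a v$, of length $\ell(v)-1$; by Lemma~\ref{lem-wm-32} this singleton is exactly the homological position where nonzero homology occurs. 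In either case we conclude $\optimalrank(v) \ge \ell(v)-1$.

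\textbf{Step 2: Match the upper bound.} Since $v$ is boolean and a run is a nontrivial word, $v \neq e$, so Corollary~\ref{cor:optimal rank < l(v)} gives $\optimalrank(v) < \ell(v)$, i.e. $\optimalrank(v) \le \ell(v)-1$. Combining with Step~1 yields $\optimalrank(v) = \ell(v)-1$.

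\textbf{Main obstacle.} There is essentially no obstacle: all the work — constructing the optimal partner and verifying that the singleton of the almost perfect matching sits at rank $\ell(v)-1$ — has already been carried out in the proof of Lemma~\ref{lem:runs}. The only point requiring a word of care is the degenerate case $b=0$, where one should note that Lemma~\ref{lem:runs} hands back the identity as an optimal partner and that the corresponding one-point intersection trivially realizes the optimal rank $0$; this is why the corollary unifies the two cases of the lemma.
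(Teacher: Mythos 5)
Your proposal is correct and follows essentially the same route as the paper: the corollary is read off directly from Lemma~\ref{lem:runs}, whose proof already exhibits the almost perfect matching with unmatched element $\sigma_a v$ of rank $\ell(v)-1$ (and the identity partner in the $b=0$ case), combined with the upper bound $\optimalrank(v)<\ell(v)$ from Corollary~\ref{cor:optimal rank < l(v)}. Your brief appeal to Lemma~\ref{lem-wm-32} is harmless but unnecessary, since the optimal rank is defined combinatorially by the rank of the singleton.
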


Lemma~\ref{lem:runs} seems to consider a very particular situation. However, because we are assuming that $v$ is boolean, we can actually view any $\rw{s} \in R(v)$ as a product of disjoint runs.

\begin{example}\label{ex:runs in 24153}
$R(24153) = \{\rw{1324},\rw{3124},\rw{1342},\rw{3142},\rw{3412}\}$. The first element of that set can be viewed as the concatenation of three runs: $1\cdot32\cdot4$, whereas the last element can be viewed as the concatenation of two runs: $34\cdot12$. Of course, we could also write $1\cdot3\cdot2\cdot4$ and so on, but this inefficiency is not helpful, as described below.
\end{example}

\subsection{Optimal ranks}\label{sec:matchings.3}

Corollary~\ref{cor:optimal rank of runs} suggests that the optimal rank of $v$ might be related to the fewest number of runs needed to form a reduced word for $v$, and indeed that is the main result of this section.

\begin{definition}
Fix a boolean permutation $v$. Let $\run(v)$ be the fewest number of runs needed in any concatenation forming a reduced word for $R(v)$. A reduced word $\rw{s} \in R(w)$ that can be written as the concatenation of $\run(v)$ runs is an \emph{optimal run word} for $v$.
\end{definition}

Recalling Example~\ref{ex:runs in 24153}, we have that $\run(24153) = 2$, and $\rw{3412}$ is an optimal run word for $24153$.

\begin{example}\label{ex:wm-commuting}
If $v$ is boolean and, additionally, a product of pairwise commuting simple reflections, then 
$\run(v)=\ell(v)$ and any reduced word of $v$ is an optimal run word. In this case
it is also easy to prove that $\optimalrank(v)=0$.  Indeed, for any $w$, we obviously have
$B(v)\cap B(w)=B(u)$, where $u$ is the product of simple reflections in 
$\mathrm{supp}(v)\cap \mathrm{supp}(w)$. Hence, if $u\neq e$, the set $B(u)$ has a perfect matching.
If $u=e$, then $B(u)$ has an almost perfect matching with singleton of length $0$. 
\end{example}

The main result of this section will determine the optimal rank and an optimal partner for any boolean permutation. Before doing so, we will give an upper bound on the optimal rank, using a handy lemma. 

\begin{lemma}\label{lem:deleting a letter and slimming}
Fix a reduced word $\rw{s} = \rw{s_1\cdots s_l} \in R(w)$ and $i \in [1,l]$. Set $\hat{s} := s_1\cdots \hat{s_i} \cdots s_l$. 
There exists a unique maximal $w' \in B(w)$ having a reduced word that is a subword of $\hat{s}$. In other words, the permutations whose reduced words are subwords of $\hat{s}$ form a principal order ideal. 
Moreover, the boolean elements of $B(w)$ with a reduced word that can be written as a subword of $\hat{s}$ are exactly the boolean elements of $B(w')$.
\end{lemma}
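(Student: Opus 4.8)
The statement has three parts: (1) the permutations whose reduced words are subwords of $\hat{s}$ form a principal order ideal, so there is a unique maximal such $w'$; (2) $w' \in B(w)$; and (3) the boolean elements of $B(w)$ expressible via a subword of $\hat{s}$ coincide with the boolean elements of $B(w')$. Part (3) is almost immediate from (1)–(2): by Theorem~\ref{thm:subword property}, the set of permutations with a reduced word that is a subword of $\hat{s}$ is exactly $B(w')$; these are certainly elements of $B(w)$ since $\hat{s}$ is a subword of $\rw{s} \in R(w)$; and intersecting with the boolean permutations gives the claim. So the work is in (1) and (2).

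\textbf{The key step.} For (1), the cleanest route is to recall that the set of subwords of a \emph{fixed} word $\hat{s}$ — more precisely, the set of permutations admitting some reduced word that is a subword of $\hat{s}$ — is well-known to be a lower set in Bruhat order (this is essentially the subword property again, together with the fact that deleting a letter from a reduced word and re-reducing moves down in Bruhat order). To see it is a \emph{principal} order ideal, I would argue that it has a unique maximal element: if $u_1, u_2$ are both maximal among permutations with a reduced word that is a subword of $\hat{s}$, realize each by a specific subword, and use the exchange condition / the standard fact that among all subwords of a fixed word there is one giving the Bruhat-largest element (the "Demazure product" of the letters of $\hat{s}$, which lies in $B(w)$ because the Demazure product of a subword is bounded above by the Demazure product of the whole word, and the Demazure product of a reduced word $\rw{s}$ of $w$ is just $w$). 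Call this maximal element $w'$; then every permutation with a reduced word that is a subword of $\hat{s}$ is $\le w'$, and conversely $w' \le w$ gives (2) for free. Alternatively, and perhaps more in the spirit of the paper, one can avoid invoking the Demazure product by name: take $\hat{s}$ and greedily "reduce" it — repeatedly delete a letter that can be removed via braid/commutation to become a repeated adjacent letter which then cancels — until a reduced word $\rw{t}$ remains; then show by the exchange property that the permutation $w'$ represented by $\rw{t}$ satisfies $u \le w'$ for every $u$ with a reduced subword of $\hat{s}$. Either way, $w' \le w$ is automatic since $\hat{s}$ is obtained from a reduced word of $w$ by a deletion.

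\textbf{Main obstacle.} The only real subtlety is establishing the \emph{uniqueness} of the maximal element $w'$, i.e. that the poset of subword-representable permutations is not merely a lower set but has a single top. This is where one genuinely needs the monotonicity of the Demazure (0-Hecke) product under taking subwords, or an equivalent exchange-condition argument; one cannot get it from the plain subword property alone, since that only controls comparabilities downward. I expect to spend most of the proof here, and I would phrase it as: the Demazure product $\delta(\hat{s})$ of the letter sequence $\hat{s}$ is a well-defined permutation $w'$, it satisfies $u \le w'$ for every $u$ having a reduced word that is a subword of $\hat{s}$, it satisfies $w' \le \delta(\rw{s}) = w$, and it itself has a reduced word that is a subword of $\hat{s}$ (take any reduced word extracted from the greedy reduction of $\hat{s}$). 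Granting that, $w'$ is the claimed unique maximal element, the order ideal is $B(w') \subseteq B(w)$, and part (3) follows by intersecting with the set of boolean permutations.
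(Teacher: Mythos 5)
Your proposal is correct, but it takes a different route from the paper. You settle the crux --- uniqueness of the maximal element --- by invoking the Demazure ($0$-Hecke) product: $w' = \delta(\hat{s})$, using the standard facts that the permutations with a reduced word occurring as a subword of a fixed word $\hat{s}$ are exactly the elements of $B(\delta(\hat{s}))$, that $\delta$ is monotone under passing to subwords (so $w' \le \delta(s) = w$ because $\rw{s}$ is reduced), and that $\delta(\hat{s})$ itself has a reduced word inside $\hat{s}$. The paper instead gives a self-contained induction on $\ell(w)$: it peels off the last letter $s_l$, applies the inductive hypothesis to $s_1\cdots \hat{s_i}\cdots s_{l-1}$ to get $u'$, and sets $w' := u'\s_{s_l}$ if that increases length and $w' := u'$ otherwise, then checks maximality by a case analysis on whether a reduced word of a maximal candidate can avoid ending in $s_l$; the boolean statement is then also verified inductively. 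Note that the paper's recursion is precisely the letter-by-letter computation of $\delta(\hat{s})$, so the two arguments are the same construction in different clothing: your version buys brevity and a conceptual explanation (and you correctly observe that once the principal-ideal statement is known, the boolean claim follows immediately from Theorem~\ref{thm:subword property}, making the paper's separate inductive treatment of it redundant), at the cost of importing $0$-Hecke/Demazure-product facts not proved in the paper; the paper's version stays entirely within the subword property. Your alternative ``greedy reduction'' sketch is left undeveloped, but it is not needed given the Demazure-product argument.
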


\begin{proof}
It is easy to check that the lemma holds for permutations of small lengths. In particular, if $\ell(w) = 1$ then $w' = e$. Suppose now that the result is true for all permutations $u$ with $\ell(u) < \ell(w)$. 

If $i = l$, then $\hat{s}$ is necessarily reduced. Therefore we have $w' = \rw{\hat{s}}$, and the property for boolean elements follows immediately.

Now suppose that $i < l$. 
Define the string $t := s_1\cdots s_{l-1}$ and set $u := \rw{t}$, noting that $\ell(u) = l-1 < \ell(w)$. 
Set $\hat{t} := s_1 \cdots \hat{s_i} \cdots s_{l-1}$, and let $u' \in B(u)$ be the permutation produced by the inductive hypothesis. 
Define a permutation $\overline{w}$ as follows:
\begin{equation}\label{eqn:slimmed permutation}
\overline{w}:= \begin{cases}
u' \s_{s_l} 
& \text{if } \ell(u'\s_{s_l}) > \ell(u'), \text{ and}\\
u' & \text{if } \ell(u'\s_{s_l}) < \ell(u').
\end{cases}
\end{equation}
By definition, this $\overline{w}$ has a reduced word that is a subword of $\hat{s}$. It remains to establish that $\overline{w}$ is maximal with this property and that $B(\overline{w})\subset B(w)$ contains the necessary boolean elements.

Let $x$ be maximal among permutations having reduced words that are subwords of $\hat{s}$. Fix $\rw{q} \in R(x)$ such that $q$ is a subword of $\hat{s}$ and, if possible, the rightmost letter of $q$ is not $s_l$.

If $q$ does not end with $s_l$, then $q$ is a subword of $\hat{t}$. Maximality of $u'$ means that $x =  u'$. Because this $x$ is maximal, we must have that $\ell(u' \s_{s_l}) < \ell(u')$, and so $x = \overline{w}$.

On the other hand, suppose that it is impossible to write $q$ in this way, and so $q = q' s_l$ for a subword $q'$ of $\hat{t}$. This means that $\rw{q'} \le u'$ in the Bruhat order, and $\s_{s_l}$ is not a right descent of $u'$. But then $\overline{w} = u'\s_{s_l}$, and so we must have $\rw{q'} = u'$ by maximality. Therefore $x = \rw{q's_l} = \overline{w}$.

Therefore, $w' := \overline{w}$ is the desired permutation.

We now use the inductive hypothesis and Equation~\eqref{eqn:slimmed permutation} to prove the second half of the lemma. Let $v$ be a boolean permutation with a reduced word that is a subword of $\hat{s}$. If $v$ actually has a reduced word that is a subword of $\hat{t}$, then $v \in B(u') \subseteq B(w')$ by the inductive hypothesis. If $v$ has no such reduced word, then it must be the case that $w' = u'\s_{s_l}$. Fix $\rw{r} \in R(v)$ such that $r$ is a subword of $\hat{s}$. Then $r$ is a concatenation of $r' \subseteq \hat{t}$ and $s_l$. By the inductive hypothesis, $\rw{r'} \in B(u')$ and hence $v = \rw{r' \cdot s_l} \in B(u'\s_{s_l}) = B(w')$.
\end{proof}

We demonstrate Lemma~\ref{lem:deleting a letter and slimming} with an example.

\begin{example}
Let $w = 4321$ and $\rw{s} = \rw{321232} \in R(w)$, with $i = 3$. Thus $\hat{s} = 32232$. The proof of Lemma~\ref{lem:deleting a letter and slimming} builds the permutation $w'$ inductively from permutations $u'_1, \ldots, u'_5 = w'$ as follows.
$${\renewcommand{\arraystretch}{1.5}\begin{tabular}{c||c|c|c|c|c}
\ \ $j$ \ \ & $1$ & $2$ & $3$ & $4$ & $5$\\
\hline
\hline
$u'_j$ & $\rw{3}$ & $\rw{32}$ & $\rw{32}$ & $\rw{323}$ & $\rw{323}$ 
\end{tabular}}$$
The ideal $B(w)$ has thirteen boolean elements, five of which can be formed from subwords of $\hat{s}$:
\begin{equation}\label{eqn:slimming example}
\{e, \rw{2}\!, \rw{3}\!, \rw{23}\!, \rw{32}\}.
\end{equation}
The boolean elements of $B(\rw{232})$ are exactly the five permutations listed in~\eqref{eqn:slimming example}.
\end{example}

The ability to delete letters from a reduced word without losing particular elements from its principal order ideal is important for the inductive step in the following proposition.

\begin{proposition}\label{prop:bounding optimal rank}
For any boolean permutation $v$, we have $\optimalrank(v) \le \ell(v) - \run(v)$.
\end{proposition}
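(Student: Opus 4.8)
The plan is to induct on $\run(v)$, with Lemma~\ref{lem:runs} and Corollary~\ref{cor:optimal rank of runs} serving as the base case $\run(v)=1$. For the inductive step, fix an optimal run word $\rw{s} = \rw{s_1 \cdots s_l} \in R(v)$ written as a concatenation of $m := \run(v) > 1$ runs, say $\rw{s} = \rw{r_1 r_2 \cdots r_m}$, and let $\rw{r_m}$ be the last run. First I would isolate the portion of $\rw{s}$ preceding $\rw{r_m}$: set $\rw{t} := \rw{r_1 \cdots r_{m-1}}$ and $u := \rw{t}$, a boolean permutation with $\run(u) \le m-1$. By the inductive hypothesis, $\optimalrank(u) \le \ell(u) - \run(u) \le \ell(u) - (m-1)$, so there is an optimal partner $w_u$ for $u$ with an almost perfect matching of $B(u) \cap B(w_u)$ whose unmatched singleton $z$ has length $\optimalrank(u)$.

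The heart of the argument is to extend $w_u$ to a partner $w$ for all of $v$ by "appending information about the last run" in a way that adds exactly $\ell(\rw{r_m}) = \ell(v) - \ell(u)$ to the rank of the singleton while only adding $1$ to $\run$, so that the bound degrades from $\ell(u) - (m-1)$ to $\ell(v) - m$. Concretely, if $\rw{r_m} = \rw{a(a+1)\cdots(a+b)}$ (the decreasing case is symmetric), I would take $w$ to have a reduced word of the form $\rw{(\text{optimal partner data for } u)\,(a+1)(a+2)\cdots(a+b)\,a\,(a+1)\cdots(a+b-1)}$ — i.e., glue on the optimal-partner word for a single run from Lemma~\ref{lem:runs}. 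The claim is then that $B(v) \cap B(w)$ decomposes, up to the isomorphism supplied by Lemma~\ref{lem:commuting generators make product poi} (together with the factorization of $B(v)\cap B(w)$ into a product over commuting blocks noted in Subsection~\ref{sec:matchings.1}), as something like $\big(B(u) \cap B(w_u)\big)$ glued with $\big(B(\rw{r_m}) \cap B(\text{the single-run partner})\big) = B(\rw{r_m}) \setminus \{\rw{r_m}\}$. A product of an almost perfect matching with a perfect matching is again an almost perfect matching, whose singleton has length $\optimalrank(u) + (\ell(\rw{r_m}) - 1) \le \ell(u) - (m-1) + \ell(v) - \ell(u) - 1 = \ell(v) - m$. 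Since $\optimalrank(v)$ is the \emph{maximum} over all partners $w$ of the rank of the singleton, this particular $w$ only gives a \emph{lower} bound — so I would instead argue in the contrapositive direction, using Lemma~\ref{lem:deleting a letter and slimming}: deleting the letters of $\rw{r_m}$ one at a time from an arbitrary partner's intersection, and tracking how the matching structure is constrained.

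Actually, the cleaner route — and the one I expect the authors take — is the following. Let $w$ be any optimal partner for $v$, so $B(v) \cap B(w)$ has an almost perfect matching with singleton $z$ of length $\optimalrank(v)$. Using Lemma~\ref{lem:deleting a letter and slimming} repeatedly, delete from a reduced word for $v$ all letters of the first run $\rw{r_1}$ except possibly a controlled remainder; the resulting "slimmed" permutation $v'$ has $\run(v') \le \run(v) - 1$ and $\ell(v') \le \ell(v) - (\text{length of } r_1)$, while Lemma~\ref{lem:deleting a letter and slimming} guarantees the relevant boolean elements — in particular those realizing the matching — survive in $B(v')$. Then one shows that an almost perfect matching of $B(v) \cap B(w)$ with a high singleton forces an almost perfect matching of $B(v') \cap B(w)$ (or of $B(v') \cap B(w')$ for a suitable $w'$) with a correspondingly high singleton, so that $\optimalrank(v') \ge \optimalrank(v) - (\ell(v) - \ell(v'))$; rearranging and applying the inductive hypothesis $\optimalrank(v') \le \ell(v') - \run(v')$ yields the claim. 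The main obstacle, and where I would spend the most care, is precisely this transfer step: verifying that after deleting a letter, the almost perfect matching on the larger intersection restricts (or descends) to an almost perfect matching on the smaller one without either losing the singleton or accidentally splitting it into two — this requires a careful bookkeeping of which coideal in the matching filtration the singleton lives in, and matching that up with the maximal-element description of $B(v)\cap B(w)$ from Proposition~\ref{thm-int-part1} and Corollary~\ref{cor:intersecting with boolean elements}.
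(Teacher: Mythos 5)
Your proposal does not close the argument. The step you yourself flag as the main obstacle --- descending a given almost perfect matching of $B(v)\cap B(w)$ to one of $B(v')\cap B(w')$ after deleting a run --- is exactly the content of the proof, and it is left unresolved. The paper never tries to restrict a pre-existing matching. It inducts on $\ell(v)$ and, for an \emph{arbitrary} partner $w$ (not an optimal one), builds a matching from scratch: setting $m=\max(\supp(v)\cap\supp(w))$, it pairs $z\longleftrightarrow z\addletter\s_m$ whenever possible; the unmatched elements are shown to form a principal filter generated by $q=\rw{s_{[m',m-1]}}$, where $[m',m]$ is forced to be a run of $\rw{s}$ because $\rw{s_{[m'+1,m]}}\le w$ and $\rw{s_{[m',m-1]}}\le w$ but $\rw{s_{[m',m]}}\not\le w$; this filter is identified with $B(v')\cap B(w')$, where $v'$ deletes the whole run $[m',m]$ from $\rw{s}$ and $w'$ is obtained from $w$ by repeated use of Lemma~\ref{lem:deleting a letter and slimming} (note that the slimming lemma is applied to the \emph{partner} $w$, deleting copies of $m'-1$, not to $v$ as in your sketch); then the inductive hypothesis handles the residue. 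That it suffices to exhibit \emph{some} matching is justified by Lemma~\ref{lem-wm-32}: the rank of the singleton is the degree of the nonzero homology of the restricted BGG complex, hence independent of which matching one constructs.

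Even granting the transfer inequality you state, the bookkeeping does not give the proposition. From $\optimalrank(v')\ge\optimalrank(v)-(\ell(v)-\ell(v'))$ and the inductive bound $\optimalrank(v')\le\ell(v')-\run(v')$ you get only $\optimalrank(v)\le\ell(v)-\run(v')$, and since deleting an entire run from an optimal run word gives $\run(v')\le\run(v)-1$, this is at best $\ell(v)-\run(v)+1$ --- short by one. The missing unit is precisely the paper's key observation: the obstruction filter is generated by the run \emph{minus one letter} ($\ell(q)=m-m'$, not $m-m'+1$), because any unmatched element must already contain $\rw{s_{[m',m-1]}}$ while the full run never lies in the intersection; so the correct drop is $\ell(v)-\ell(v')-1$, and then $\ell(q)+\ell(v')-\run(v')\le\ell(v)-\run(v)$ closes exactly. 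Your first construction (gluing single-run partners) is rightly discarded, since it only produces a lower bound; that construction is what the paper uses later, in Theorem~\ref{thm:optimal rank and partner for boolean}, to show the bound is attained.
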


\begin{proof}
We prove this result by induction on $\ell(v)$, and it is easy to verify that the proposition holds for $\ell(v) \le 2$. In fact, we also know that the proposition holds whenever $\run(v) = 1$, by Corollary~\ref{cor:optimal rank of runs}. Suppose, inductively, that for any boolean $u$ with $\ell(u) < \ell(v)$, any intersection $B(u) \cap B(y)$ either has a perfect matching or has an almost perfect matching with a single unmatched element of rank at most $\ell(u) - \run(u)$.

Fix $\rw{s} \in R(v)$ and an arbitrary permutation $w$. We want to show that $B(v) \cap B(w)$ either has a perfect matching or has an almost perfect matching with a single unmatched element of rank at most $\ell(v) - \run(v)$. Set $m := \max (\supp(v) \cap \supp(w))$. For $z \in B(v) \cap B(w)$ with $m \not\in \supp(z)$, we match $z \longleftrightarrow z \addletter \s_m$ whenever $z \addletter \s_m$ exists. Similarly to the proof of Lemma~\ref{lem:runs}, this matching is
inherited from the filtration with respect to the Bruhat order  on the set of shortest coset
representatives for the parabolic subgroup generated by $\s_m$.
If $z \addletter \s_m$ always exists, then this matches all elements of $B(v) \cap B(w)$ and we are done.

Now suppose that this matching does not account for all elements of $B(v) \cap B(w)$, and let $X$ be the set of as-yet-unmatched elements. None of these elements have $m$ in their supports. Moreover, because it is impossible to introduce $m$ anywhere in their reduced words remaining inside $B(v) \cap B(w)$, they must all have (at least) $m-1$. Let $m' < m$ be maximal such that the subword $\rw{s}_{[m',m]}$ does not appear in any element of $R(w)$. (Note that $[m',m] \subseteq \supp(v) \cap \supp(w)$, by maximality of $m'$.) Then
$$q := \rw{s_{[m',m-1]}} \in X,$$
and any $x \in X$ must include $\rw{s_{[m',m]}}$ in its reduced words, so $x$ is greater than or equal to $q$ in the Bruhat order. Thus $X$ is a filter; that is, $X$ is the principal coideal of  $B(v) \cap B(w)$ generated by $q$. This is very good news
as it now allows us to construct the matching we are looking for, inductively. Note that
$$\ell(q) = m-1-m'+1 = m-m'.$$

Let $v':=\rw{s_{[1,m'-1]}}$ be the permutation obtained by deleting the letters $[m',m]$ from reduced words for $v$. Without loss of generality, suppose that $m'-1$ does not appear to the right of $m'$ in elements of $R(v)$. The permutation $v$ was boolean, so $v'$ is boolean and
$$\ell(v') = \ell(v) - (m-m' + 1).$$
Take a reduced word for $w$, look for all substrings that match reduced words for $q$, and mark the rightmost copy of $m'$ used in any of these. Working iteratively, delete each $m'-1$ that appears to the right of the marked $m'$, using Lemma~\ref{lem:deleting a letter and slimming} to produce a reduced word after each deletion. When there are no more copies of $m'-1$ appearing to the right of the marked $m'$, write $w'$ for the permutation described by the resulting reduced word.

We defined $v'$ and $w'$ for the purpose of our inductive argument: the filter $X$ is isomorphic to $B(v') \cap B(w')$, with $\rw{x_1 \cdots x_l} \in B(v') \cap B(w')$ corresponding to $q \addletter \s_{x_1} \addletter \cdots \addletter \s_{x_l}$. By the inductive hypothesis, $\optimalrank(v') \le \ell(v') - \run(v')$, so the poset $B(v') \cap B(w')$ has either a perfect matching or an almost perfect matching with an unmatched element of rank at most $\ell(v') - \run(v')$.

By definition of $m'$ and $q$, the sequences $\rw{s_{[m'+1,m]}}$ and $\rw{s_{[m',m-1]}}$ both appear in reduced words for $w$, but $\rw{s_{[m',m]}}$ does not. The only way for this to happen is for $[m',m]$ to be a run in $\rw{s}$ (and for reduced words for $w$ to contain a subsequence as in Lemma~\ref{lem:runs}). Thus
$$\run(v') \ge \run(v) - 1.$$
By the inductive hypothesis, the intersection $B(v') \cap B(w')$ either has a perfect matching or it has an almost perfect matching with an unmatched element of rank at most $\ell(v') - \run(v')$. Transfer this matching onto $X \subset B(v) \cap B(w)$. This, together with the initial matching $z \longleftrightarrow z \addletter \s_m$, produces either a perfect matching of $B(v) \cap B(w)$ or an almost perfect matching whose single unmatched element has rank at most
\begin{align*}
\ell(q) + \optimalrank(v') &\le \ell(q) + \ell(v') - \run(v')\\
&\le (m-m') + \ell(v) - (m-m'+1) - (\run(v) - 1)\\
&=\ell(v) - \run(v),
\end{align*}
completing the proof.
\end{proof}

We demonstrate how this bound can work, along with the inductive argument.

\begin{example}
We proceed with two examples, using the notation of Proposition~\ref{prop:bounding optimal rank}.
\begin{enumerate}\renewcommand{\labelenumi}{(\alph{enumi})}
\item Let $v = 2341 = \rw{123}$ and $w = 4123 = \rw{321}$, so $m = 3$ and $m' = 2$. We match
\begin{eqnarray*}
\emptyset &\longleftrightarrow& \rw{3}\\
\rw{1} &\longleftrightarrow& \rw{13}
\end{eqnarray*}
and $X = \{\rw{2}\}$. Then $q := \rw{123_{[2,2]}} = \rw{2}$ and $v' := \rw{123_{[1,1]}} = \rw{1}$, and $\optimalrank(v') = 0$. Lemma~\ref{lem:deleting a letter and slimming} gives $w' = \rw{32}$, and hence $B(v') \cap B(w') = \emptyset \cong X$. This example is depicted in Figure~\ref{fig:bounding optimal rank of 123 ex}.
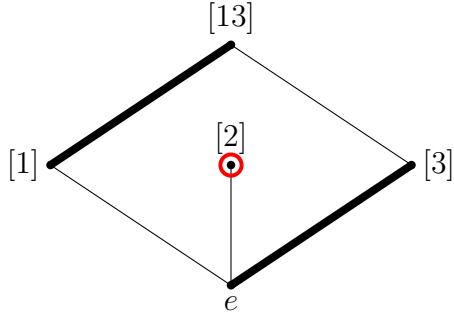
\begin{figure}[htbp]
\begin{tikzpicture}[scale=.8]
\fill (0,0) coordinate (e) circle (2pt) node[below] {$e$};
\fill (-3,2) coordinate (1) circle (2pt) node[left] {$\rw{1}$};
\fill (0,2) coordinate (2) circle (2pt) node[above] {$\rw{2}$};
\fill (3,2) coordinate (3) circle (2pt) node[right] {$\rw{3}$};
\fill (0,4) coordinate (13) circle (2pt) node[above] {$\rw{13}$};
\draw (e) -- (1);
\draw (e) -- (2);
\draw (3) -- (13);
\draw[line width=3pt] (e) -- (3);
\draw[line width=3pt] (1) -- (13);
\draw[line width=1.5pt, red] (2) circle (5pt);
\end{tikzpicture}
\caption{The intersection $B(\rw{123}) \cap B(\rw{321})$, and the matching described by Proposition~\ref{prop:bounding optimal rank}. The sole element of the filter $X$, described in the proof of that proposition, is circled in red. The unmatched element in this almost perfect matching has rank $1 < \ell(\rw{123}) - \run(\rw{123})$.}\label{fig:bounding optimal rank of 123 ex}
\end{figure}
\item Let $v = 314562 = \rw{23451}$ and $w = 235614 = \rw{412534}$, so $m = 5$ and $m' = 3$. Then we match
\begin{eqnarray*}
\emptyset &\longleftrightarrow& \rw{5}\\
\rw{1} &\longleftrightarrow& \rw{15}\\
\rw{2} &\longleftrightarrow& \rw{25}\\
\rw{3} &\longleftrightarrow& \rw{35}\\
\rw{4} &\longleftrightarrow& \rw{45}\\
\rw{13} &\longleftrightarrow& \rw{135}\\
\rw{14} &\longleftrightarrow& \rw{145}\\
\rw{23} &\longleftrightarrow& \rw{235}\\
\rw{24} &\longleftrightarrow& \rw{245}
\end{eqnarray*}
and $X = \{\rw{34}, \rw{134}, \rw{234}\}$. Then $q := \rw{23451_{[3,4]}} = \rw{34}$ and $v' := \rw{23451_{[1,2]}} = \rw{21}$, and $\optimalrank(v') = 1$. Lemma~\ref{lem:deleting a letter and slimming} gives $w' = w$, and hence $B(v') \cap B(w') = \{\emptyset,\rw{1},\rw{2}\} \cong X$. This example is depicted in Figure~\ref{fig:bounding optimal rank of 23451 ex}.
\begin{figure}[htbp]
\begin{tikzpicture}[scale=.8]
\fill (0,0) coordinate (e) circle (2pt) node[below] {$e$};
\fill (-6,2) coordinate (1) circle (2pt) node[left] {$\rw{1}$};
\fill (-3,2) coordinate (2) circle (2pt) node[left] {$\rw{2}$};
\fill (0,2) coordinate (3) circle (2pt) node[below left] {$\rw{3}$};
\fill (3,2) coordinate (4) circle (2pt) node[below right] {$\rw{4}$};
\fill (6,2) coordinate (5) circle (2pt) node[right] {$\rw{5}$};
\fill (-8,4) coordinate (13) circle (2pt) node[left] {$\rw{13}$};
\fill (-6,4) coordinate (14) circle (2pt) node[left] {$\rw{14}$};
\fill (-4,4) coordinate (15) circle (2pt) node[left] {$\rw{15}$};
\fill (-2,4) coordinate (23) circle (2pt) node[left] {$\rw{23}$};
\fill (0,4) coordinate (24) circle (2pt) node[below] {$\rw{24}$};
\fill (2,4) coordinate (25) circle (2pt) node[below] {$\rw{25}$};
\fill (4,4) coordinate (34) circle (2pt) node[right] {$\rw{34}$};
\fill (6,4) coordinate (35) circle (2pt) node[right] {$\rw{35}$};
\fill (8,4) coordinate (45) circle (2pt) node[right] {$\rw{45}$};
\fill (-5,6) coordinate (134) circle (2pt) node[above] {$\rw{134}$};
\fill (-3,6) coordinate (135) circle (2pt) node[above] {$\rw{135}$};
\fill (-1,6) coordinate (145) circle (2pt) node[above] {$\rw{145}$};
\fill (1,6) coordinate (234) circle (2pt) node[above] {$\rw{234}$};
\fill (3,6) coordinate (235) circle (2pt) node[above] {$\rw{235}$};
\fill (5,6) coordinate (245) circle (2pt) node[above] {$\rw{245}$};
\foreach \x in {1,2,3,4,5} {\draw (e) -- (\x);}
\foreach \x in {1,3,134,135} {\draw (13) -- (\x);}
\foreach \x in {1,4,134,145} {\draw (14) -- (\x);}
\foreach \x in {1,5,135,134} {\draw (15) -- (\x);}
\foreach \x in {2,3,234,235} {\draw (23) -- (\x);}
\foreach \x in {2,4,234,245} {\draw (24) -- (\x);}
\foreach \x in {2,5,235,245} {\draw (25) -- (\x);}
\foreach \x in {3,4,134,234} {\draw (34) -- (\x);}
\foreach \x in {3,5,135,235} {\draw (35) -- (\x);}
\foreach \x in {4,5,145,245} {\draw (45) -- (\x);}
\draw[line width=3pt] (e) -- (5);
\draw[line width=3pt] (1) -- (15);
\draw[line width=3pt] (2) -- (25);
\draw[line width=3pt] (3) -- (35);
\draw[line width=3pt] (4) -- (45);
\draw[line width=3pt] (13) -- (135);
\draw[line width=3pt] (14) -- (145);
\draw[line width=3pt] (23) -- (235);
\draw[line width=3pt] (24) -- (245);
\draw[red, line width=1pt] (134) -- (34);
\draw[red, line width=3pt] (34) -- (234);
\foreach \x in {34,134,234} {\fill (\x) circle (2pt); \draw[line width=1.5pt, red] (\x) circle (5pt);}
\end{tikzpicture}
\caption{The intersection $B(\rw{23451}) \cap B(\rw{412534})$, and the matching described by Proposition~\ref{prop:bounding optimal rank}. The filter $X$, described in the proof of that proposition, and its matching are marked in red. The unmatched element in this almost perfect matching has rank $3 = \ell(\rw{23451}) - \run(\rw{23451})$.}\label{fig:bounding optimal rank of 23451 ex}
\end{figure}
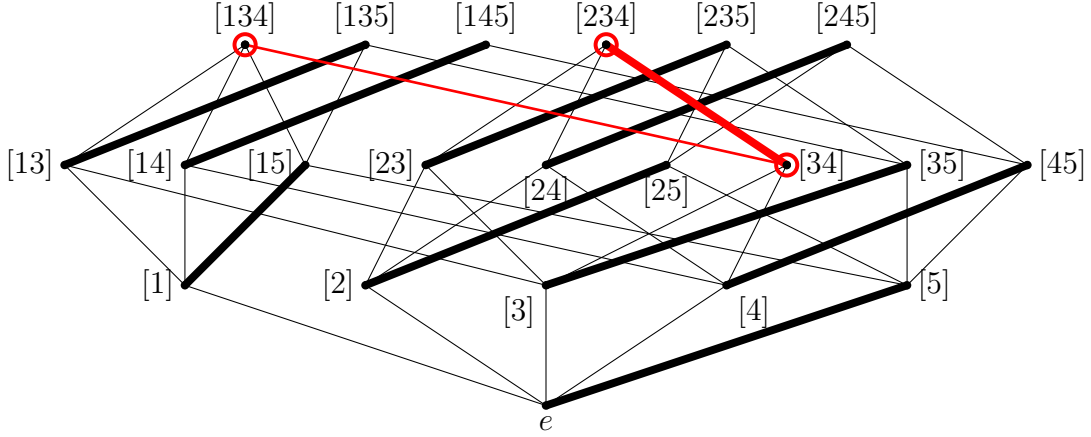
\end{enumerate}
\end{example}

\subsection{Optimal rank and optimal partner}\label{sec:matchings.4}

We are now ready to describe the optimal rank and an optimal partner for any boolean permutation.

\begin{theorem}\label{thm:optimal rank and partner for boolean}
Let $v$ be a boolean permutation, and let $\rw{s}$ be an optimal run word for $v$. Write $s$ as a concatenation
$$r_1 \cdots r_{\run(v)},$$
where the $r_i$ are runs. For each $i$, let $\rw{t_i}$ be the optimal partner for $\rw{r_i}$ as determined by Lemma~\ref{lem:runs}. Then $w:= \rw{t_1 \cdots t_{\run(v)}}$ is an optimal partner for $v$, and
$$\optimalrank(v) = \ell(v) - \run(v).$$ 
\end{theorem}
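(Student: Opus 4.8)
The plan is to establish the two assertions — that $\optimalrank(v) = \ell(v) - \run(v)$ and that the displayed $w$ is an optimal partner — together, by proving the matching lower bound that complements Proposition~\ref{prop:bounding optimal rank}. Since that proposition already gives $\optimalrank(v) \le \ell(v) - \run(v)$, it suffices to exhibit one permutation $w$ for which $B(v) \cap B(w)$ admits an almost perfect matching whose unmatched singleton has rank exactly $\ell(v) - \run(v)$; the natural candidate is the $w = \rw{t_1 \cdots t_{\run(v)}}$ built run-by-run from Lemma~\ref{lem:runs}. First I would set up the decomposition: writing $s = r_1 \cdots r_{\run(v)}$ with each $r_i$ a run, I want to see the order ideal $B(v)\cap B(w)$ as being glued from the pieces $B(\rw{r_i}) \cap B(\rw{t_i})$. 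The key structural input is Lemma~\ref{lem:commuting generators make product poi}: consecutive runs $r_i$ and $r_{i+1}$ overlap in their supports in at most one pair of consecutive letters (otherwise $r_i r_{i+1}$ could be rewritten as fewer runs, contradicting optimality of $\run(v)$), and the orientation of that overlapping pair in $w$ is arranged, by the explicit form of the $t_i$ in Lemma~\ref{lem:runs}, to match the one forced in $v$. So I would first argue that $B(v) \cap B(w)$ factors, up to the usual coideal-filtration bookkeeping, through the intersections coming from the individual runs.

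Next I would handle a single run. For $\rw{r_i} = \rw{a(a+1)\cdots(a+b)}$ and its optimal partner $\rw{t_i}$, Lemma~\ref{lem:runs} (more precisely its proof) tells us $B(\rw{r_i}) \cap B(\rw{t_i}) = B(\rw{r_i}) \setminus \{\rw{r_i}\}$, and this set carries an almost perfect matching — inherited from the coset filtration for the parabolic generated by $\sigma_a$ — whose lone unmatched element $\sigma_a \rw{r_i}$ has rank $\ell(\rw{r_i}) - 1 = b$. For the global picture I would process the runs from, say, right to left, using at each stage that the previously built matching on the "tail" $\rw{r_{j+1}\cdots r_{\run(v)}}$-part is an almost perfect matching, and that the interaction with $\rw{r_j}$ is governed by a product-of-posets statement once the shared consecutive pair is dealt with. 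The total unmatched rank then telescopes: each run contributes $\ell(\rw{r_i}) - 1$, and summing over $i$ gives $\sum_i (\ell(\rw{r_i}) - 1) = \ell(v) - \run(v)$, with the single unmatched element surviving because exactly one run's "deficient" singleton cannot be paired off against a coset. Combined with Proposition~\ref{prop:bounding optimal rank} and Corollary~\ref{cor:optimal rank < l(v)}, this forces $\optimalrank(v) = \ell(v) - \run(v)$ and certifies $w$ as optimal.

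The main obstacle I anticipate is the gluing step — making rigorous the claim that $B(v) \cap B(w)$ behaves like a (filtered) product of the per-run intersections. The subtlety is precisely the shared consecutive letter between adjacent runs $r_i, r_{i+1}$: Lemma~\ref{lem:commuting generators make product poi} applies only when there is at most one noncommuting cross-pair and it appears in the correct order in some reduced word, so I need to verify that the chosen $t_i$'s are compatible in exactly this sense, i.e.\ that the concatenation $\rw{t_1 \cdots t_{\run(v)}}$ does not accidentally create interlacing or wrong-orientation overlaps that would shrink $B(v)\cap B(w)$ and lower the rank of the singleton. This is where I would lean on the explicit "shifted" form of the optimal partner in Lemma~\ref{lem:runs}, checking that the right end of $t_i$ and the left end of $t_{i+1}$ present the shared letter-pair with matching orientation, and then invoke Corollary~\ref{cor:intersecting with elements that always have j=1} to describe the maximal elements of the intersection concretely. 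Once that compatibility is nailed down, the matching itself is assembled from the per-run coset filtrations, ordered so that the coideal property required by Lemma~\ref{lem-wm-32} is preserved, and the rank count is the routine telescoping sum above.
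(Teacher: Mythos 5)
Your overall architecture is the same as the paper's: take the $w$ built run-by-run from Lemma~\ref{lem:runs}, show that $B(v)\cap B(w)$ consists exactly of the subwords of $s$ containing no full run $r_i$ (equivalently, the product over $i$ of $B(\rw{r_i})\setminus\{\rw{r_i}\}$ under the support identification $B(v)\cong 2^{\supp(v)}$), build an almost perfect matching run by run whose unique unmatched element is the concatenation of the truncated runs, of length $\sum_i(\ell(\rw{r_i})-1)=\ell(v)-\run(v)$, and conclude with Proposition~\ref{prop:bounding optimal rank}. That is precisely the paper's proof. The genuine problem is in how you propose to certify the description of the intersection. Corollary~\ref{cor:intersecting with elements that always have j=1} does not apply here: the minimal excluded subwords in $\mathcal{W}(v,w)$ are exactly the full runs $r_i$ (together with the single-letter runs, which drop out of $\supp(w)$), and a run of length $b_i+1\geq 3$ has $j=b_i\geq 2$, violating the corollary's hypothesis. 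Likewise, Lemma~\ref{lem:commuting generators make product poi} tolerates at most one noncommuting cross-pair, while the runs of an optimal run word can contribute up to $\run(v)-1$ such boundary pairs (and the lemma concerns a single ideal $B(w)$, not an intersection), so it cannot carry the ``gluing'' step as you invoke it. The orientation-matching analysis you anticipate is also not the right mechanism.

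The step can be repaired, and then your argument merges with the paper's. Since the supports $\supp(\rw{t_i})$ are pairwise disjoint intervals, the concatenation $t_1\cdots t_{\run(v)}$ is a reduced word for $w$ (lengths add across disjoint supports). Then the subword property (Theorem~\ref{thm:subword property}) gives both inclusions directly: any subword of $s$ with no full run has, block by block, a reduced word inside the corresponding $t_i$ by Lemma~\ref{lem:runs}, hence embeds in $t_1\cdots t_{\run(v)}$; conversely, a subword of $s$ containing a full run lies above $\rw{r_i}$ in $B(v)$, and $\rw{r_i}\leq w$ would force a reduced word of $\rw{r_i}$ to sit inside the $t_i$ block, contradicting $B(\rw{r_i})\cap B(\rw{t_i})=B(\rw{r_i})\setminus\{\rw{r_i}\}$. (Alternatively, the general Proposition~\ref{thm-int-part1}, not Corollary~\ref{cor:intersecting with elements that always have j=1}, identifies the maximal elements.) With that identity in hand, your per-run matchings and the coideal-filtration assembly, together with the telescoping rank count and Proposition~\ref{prop:bounding optimal rank}, complete the proof exactly as in the paper; also note that the filtration must be organized so that at each stage the unmatched part is a coideal, which is what the paper's stage-by-stage processing of the letters $a_i$ accomplishes.
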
 

\begin{proof}
With $v$ and $w$ as described, we have that $B(v) \cap B(w) = B(v) \setminus Q$, where $Q$ is the set of elements involving at least one full run $r_i$.

For each $i$, set $[a_i,a_i+b_i]:= \supp(\rw{r_i})$. We can now describe an almost perfect matching of $B(v) \cap B(w)$. Consider an element $z \in B(v) \cap B(w)$. We define the matching by examining how much of $[a_i,a_i+b_i]$ is contained in $\supp(z)$, starting with $i = 1$ and increasing $i$ as needed. If we reach an $i$ for which $b_i = 0$, we immediately increase $i$ because $t_i = \emptyset$ and $\rw{a_i} \not\in B(v) \cap B(w)$. So in the following outline, assume that each $b_i > 0$.

\begin{itemize}
\item Consider $z \in B(v) \cap B(w)$. If $a_1 \not\in \supp(z)$ and $[a_1+1,a_1+b_1] \not\subseteq \supp(z)$, then match $z \longleftrightarrow z \addletter \s_{a_1}$ similarly to the proof of Lemma~\ref{lem:runs}. 
\item The elements in $B(v) \cap B(w)$ that are not yet matched are exactly those that contain all of $[a_1+1,a_1+b_1]$ in their supports. These form a coideal, so we can proceed inductively.
Now let $z$ be such an element. If $a_2 \not\in \supp(z)$ and, additionally, we have
$[a_2+1,a_2+b_2] \not\subseteq \supp(z)$, then match $z \longleftrightarrow z \addletter \s_{a_2}$
similarly to the proof of Lemma~\ref{lem:runs}.
\item The elements in $B(v) \cap B(w)$ that are not yet matched are exactly those that contain all of $[a_1+1,a_1+b_1] \cup [a_2+1,a_2+b_2]$ in their supports. These form a coideal, so we can proceed inductively. Now let $z$ be such an element and repeat the process with $i = 3$.
\item And so on.
\end{itemize}

At the end of this process, after $i = \run(v)$, we have an almost perfect matching of $B(v) \cap B(w)$, and the only unmatched element is $u := \rw{\hat{r}_1 \cdots \hat{r}_{\run(v)}}$, where $\hat{r}_i$ is the run $i$ with the element $a_i$ removed. Note that $\ell(u) = \ell(v) - \run(v)$. By Proposition~\ref{prop:bounding optimal rank}, this completes the proof.
\end{proof}

It is illuminating to see Theorem~\ref{thm:optimal rank and partner for boolean} demonstrated in an example. The example is, perhaps, too big for drawing the full poset, but we can describe the key pieces.

\begin{example}
Let $v = 5123678(12)49(10)(11) = \rw{(11)43(10)5216798} \in \mf{S}_{12}$. Our first step is to find an optimal run word for $v$. There are several options for this, including the words $\rw{(11)(10)945678321}$ and $\rw{(11)(10)943215678}$. In particular, $\ell(v) = 11$ and $\run(v) = 3$, so the theorem predicts $\optimalrank(v) = 8$. Using $\rw{(11)(10)945678321}$, the theorem produces the optimal partner $w = \rw{(10)9(11)(10)567845672132}$, and the single unmatched element in the almost perfect matching of $B(v) \cap B(w)$ described in the proof of Theorem~\ref{thm:optimal rank and partner for boolean} is $\rw{(11)(10)567832}$, which does indeed have length $8$.
\end{example}

\section{The main result}\label{sec:tableaux}

\subsection{The minimal number of runs via the Robinson-Schensted 
correspondence}\label{sec:tableaux.1}

The {\em Robinson-Schensted correspondence} provides a bijection
$$
\mathbf{RS}:\mf{S}_n\to \coprod_{\lambda\vdash n}
\mathrm{SYT}_\lambda\times \mathrm{SYT}_\lambda
$$
between $\mf{S}_n$ and the set of pairs of standard Young tableaux of the same shape
(this shape is supposed to be a partition of $n$). We fix such a bijection given by 
Schensted's insertion algorithm, see \cite{schensted,sagan}. For $w\in W$,
we have $\mathbf{RS}(w)=(P,Q)$, where $P$ is the insertion tableau and
$Q$ is the recording tableau, see \cite{sagan} for details. We denote by
$\shape(w)$ the shape of $P$, by $\RowBT_i(w)$ the contents of its $i$th row, 
and $\lambda_i(w) := |\RowBT_i(w)|$.

Recall from Theorem~\ref{thm:boolean characterization} that boolean 
permutations avoid the pattern $321$. The main result of \cite{schensted}
therefore gives restrictions on their shapes.

\begin{corollary}\label{cor:boolean shape has 2 rows}
{\hspace{1mm}}

\begin{enumerate}\renewcommand{\labelenumi}{(\alph{enumi})}
\item For any permutation $w$, the number $\lambda_1(w)$  is the length of 
a longest increasing subsequence in $w$. 
\item If $w$ is boolean, then $\shape(w)$ has at most two rows.
\end{enumerate}
\end{corollary}

We start with the following observation.

\begin{lemma}\label{lem:run effect on top row}
Let $w, x \in \mf{S}_n$ be permutations such that $x = \rw{r}$ for a run $r$. Then $|\lambda_1(wx)-\lambda_1(w)|\leq 1$.
\end{lemma}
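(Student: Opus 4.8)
The plan is to track how a single Schensted column-insertion of a run changes the length of the longest increasing subsequence, using the fact (Corollary~\ref{cor:boolean shape has 2 rows}(a)) that $\lambda_1$ of any permutation equals the length of its longest increasing subsequence. The key point is that $x=\rw{r}$ for a run $r$ is an extremely simple permutation: as a map, multiplying $w$ on the right by $\sigma_{a}\sigma_{a+1}\cdots\sigma_{a+b}$ (an increasing run) realizes a single cycle on positions, so $wx$ is obtained from $w$ by taking the values currently in positions $a,a+1,\dots,a+b+1$ of the one-line notation and cyclically rotating them by one. A decreasing run does the opposite rotation. In either case only $b+2$ consecutive positions of $w$ are affected, and the multiset of values is unchanged; moreover each affected position changes its value to a \emph{neighboring} position's old value. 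I would first record this ``cyclic shift on an interval of positions'' description precisely, since everything else follows from it.

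Next I would use that description to bound the change in the longest increasing subsequence in both directions. For the upper bound $\lambda_1(wx)\le\lambda_1(w)+1$: given a longest increasing subsequence of $wx$, I want to produce an increasing subsequence of $w$ of length at least $\lambda_1(wx)-1$. The cyclic rotation moves each value within the block by exactly one position (except one value that wraps from one end of the block to the other); a subsequence of $wx$ uses at most a bounded number of entries from the block, and I would argue that deleting at most one cleverly chosen entry (the ``wrapped'' one, or one of its block-neighbors) and reinterpreting the rest as positions in $w$ keeps the subsequence increasing. Because the rotation shifts positions monotonically on the non-wrapped part, order within the block is essentially preserved; the only obstruction is the single wrap-around entry, which costs at most one. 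Symmetrically, applying the same argument to $wx\cdot x^{-1}=w$ (noting $x^{-1}$ is again a run, so this direction is the same statement with the roles reversed) gives $\lambda_1(w)\le\lambda_1(wx)+1$. Combining the two inequalities yields $|\lambda_1(wx)-\lambda_1(w)|\le 1$.

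An alternative, possibly cleaner, route is to write the run as a product of $b+1$ adjacent transpositions and note that a single adjacent transposition changes $\lambda_1$ by at most $1$ (elementary: swapping two adjacent positions changes the longest increasing subsequence by at most one), but this only gives $|\lambda_1(wx)-\lambda_1(w)|\le b+1$, which is too weak. So the ``do it in one shot'' viewpoint via the cyclic shift on an interval is essential; the point of a run (rather than an arbitrary product of $b+1$ transpositions) is precisely that the net effect is a rotation, under which a long increasing subsequence is disturbed in at most one place. I expect the main obstacle to be the bookkeeping in the upper-bound step: carefully identifying which single entry of a given increasing subsequence of $wx$ to discard so that the surviving entries, read as entries of $w$, are still increasing — handling the wrap-around value and the entries flanking it, and checking the boundary cases where the subsequence enters or leaves the rotated block. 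Once that case analysis is set up, the rest is immediate, and the decreasing-run case is symmetric (or follows by conjugating with the longest element, as in the proof of Theorem~\ref{thm:detecting no interlace}).
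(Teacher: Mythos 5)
Your proposal is correct and is essentially the paper's own argument: the paper likewise records that right-multiplication by the run cyclically shifts the block of values in positions $p,\dots,p+q+1$, so that only the single value $a_p$ changes its relative order with respect to the block, whence any increasing subsequence survives with at most one deletion and $\lambda_1$ changes by at most $1$. The only simplification available over your sketch is that the one entry to discard can always be taken to be the wrapped value itself, which removes the case analysis you anticipate; the decreasing-run case is handled by symmetry, just as you say.
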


\begin{proof}
Let $w = a_1 \cdots a_n$ in one-line notation, and suppose that $x = \rw{p(p+1)\cdots (p+q)}$. Then the one-line notation of $wx$ is as follows, using {\color{red}red} to mark the part that changes when multiplying $w \mapsto wx$:
\begin{equation}
a_1 \cdots a_{p-1}{\color{red}a_{p+1}a_{p+2}\cdots a_{p+q+1}a_p}a_{p+q+2}\cdots a_n.
\end{equation}
This means that we only change the relative order of one element, $a_p$, compared to the elements in $\{a_{p+1},\ldots, a_{p+q+1}\}$, leaving all other relative orders intact. Therefore the length of the longest increasing subsequence can change by at most $1$. Similar arguments apply to the run $\rw{p(p-1)\cdots (p-q)}$.
\end{proof}

The previous lemma lets us relate the size $\lambda_1(v)$ to the number of runs $\run(v)$.

\begin{corollary}\label{cor-wm-72}
For any boolean $v\in \mf{S}_n$, we have $n-\lambda_1(v)\leq \run(v)$. In particular, $\lambda_2(v)\leq \run(v)$.
\end{corollary}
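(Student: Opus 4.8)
The plan is to derive Corollary~\ref{cor-wm-72} as a direct consequence of Lemma~\ref{lem:run effect on top row} together with the structure of an optimal run word. First I would take an optimal run word $\rw{s} \in R(v)$ and write $s = r_1 \cdots r_{\run(v)}$ as a concatenation of $\run(v)$ runs, as in the setup of Theorem~\ref{thm:optimal rank and partner for boolean}. For each $j \in [0,\run(v)]$ let $v_j := \rw{r_1 \cdots r_j}$ be the partial product, so that $v_0 = e$ and $v_{\run(v)} = v$. Each step $v_{j-1} \mapsto v_j = v_{j-1}\rw{r_j}$ is right-multiplication by a single run, so Lemma~\ref{lem:run effect on top row} gives $|\lambda_1(v_j) - \lambda_1(v_{j-1})| \le 1$.

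Next I would run the obvious telescoping estimate: since $\lambda_1(v_0) = \lambda_1(e) = 1$ and each step changes $\lambda_1$ by at most $1$, we get $\lambda_1(v) = \lambda_1(v_{\run(v)}) \le 1 + \run(v)$. Hmm — this gives $n - \lambda_1(v) \ge n - 1 - \run(v)$, which is the wrong direction. The correct way to phrase it is to bound how much $\lambda_1$ can \emph{fail} to reach $n$: the longest increasing subsequence of $v$ has length $\lambda_1(v)$, and since $\ell(e)=0$ has $\lambda_1 = 1$... Actually, the cleanest route is to observe that the identity $e$ is the \emph{longest} increasing sequence possible, $\lambda_1(e) = n$, not $1$ — I need to be careful: in one-line notation $e = 12\cdots n$, whose longest increasing subsequence has length $n$, so $\lambda_1(e) = n$. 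Then each right-multiplication by a run decreases $\lambda_1$ by at most $1$ (by Lemma~\ref{lem:run effect on top row}), so after $\run(v)$ runs we have $\lambda_1(v) \ge \lambda_1(e) - \run(v) = n - \run(v)$, i.e. $n - \lambda_1(v) \le \run(v)$.

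For the ``in particular'' clause, I would invoke Corollary~\ref{cor:boolean shape has 2 rows}(b): since $v$ is boolean, $\shape(v)$ has at most two rows, so $\lambda_1(v) + \lambda_2(v) = n$, whence $\lambda_2(v) = n - \lambda_1(v) \le \run(v)$.

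I do not expect any genuine obstacle here; the only thing requiring care is the bookkeeping direction of the inequality in Lemma~\ref{lem:run effect on top row} — namely recognizing that one should start the telescoping chain at the identity $e$ (where $\lambda_1 = n$ is maximal) rather than building $v$ up from the empty word, so that the per-step bound $|\lambda_1(v_j) - \lambda_1(v_{j-1})| \le 1$ is applied to control the \emph{decrease} from $n$ down to $\lambda_1(v)$. Writing $v$ itself as a product of runs read left to right and peeling off runs from the right (equivalently, writing $e = v \cdot \rw{r_{\run(v)}}^{-1} \cdots \rw{r_1}^{-1}$, noting the inverse of a run is a run) makes the monotone chain transparent. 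Everything else is immediate from the cited results.
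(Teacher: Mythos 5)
Your proposal is correct and matches the argument the paper intends: Corollary~\ref{cor-wm-72} is meant to follow immediately from Lemma~\ref{lem:run effect on top row} by telescoping from the identity (where $\lambda_1(e)=n$) through the partial products of an optimal run word, together with Corollary~\ref{cor:boolean shape has 2 rows}(b) giving $\lambda_1(v)+\lambda_2(v)=n$ for boolean $v$. After your self-correction of the initial $\lambda_1(e)=1$ slip, the write-up is exactly the (implicit) proof in the paper.
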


We are now ready to equate $\lambda_2(v)$ to a statistic we have already encountered, when $v$ is boolean.

\begin{theorem}\label{thm:2nd row counts runs}
For any boolean permutation $v$, we have
$$\lambda_2(v) = \run(v).$$
\end{theorem}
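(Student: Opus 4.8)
Corollary~\ref{cor-wm-72} already gives $\lambda_2(v)\le\run(v)$, so the plan is to prove $\run(v)\le\lambda_2(v)$: to exhibit a reduced word for $v$ that is a concatenation of at most $\lambda_2(v)$ runs. I would induct on $n$, where $v\in\mf{S}_n$. If $\supp(v)\neq[1,n-1]$, then by Lemma~\ref{lem:support detection} $v$ is a direct sum of boolean permutations living in strictly smaller symmetric groups, and both statistics are additive under this decomposition --- for $\lambda_2$ because $\lambda_1$ of a direct sum is the sum of the $\lambda_1$'s and all factors are $321$-avoiding, hence have at most two rows by Corollary~\ref{cor:boolean shape has 2 rows} --- so the induction hypothesis disposes of this case. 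If $\supp(v)=[1,n-1]$ and $\run(v)=1$, then $v$ is the cyclic permutation of a single interval, whose one-line notation has a single out-of-place value, so $\lambda_2(v)=1=\run(v)$.

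Assume now $\supp(v)=[1,n-1]$ and $\run(v)\ge 2$; then $v$ is not a run and $v(1)\neq 1$. Put $p:=v^{-1}(1)\ge 2$ and let $v':=v\cdot\rw{12\cdots(p-1)}$ be the permutation obtained by sliding the value $1$ from position $p$ to the front. Since $v(p)=1$ is the minimum of the window of positions $[1,p]$, this product is reduced; hence $v'$ is boolean, $v=v'\cdot\rw{12\cdots(p-1)}$ displays $v$ as $v'$ followed by a single run, and $\run(v)\le\run(v')+1$. As $v'$ fixes the value $1$ it may be regarded as a boolean permutation in $\mf{S}_{n-1}$ with the same $\run$ and $\lambda_2$, so the induction hypothesis gives $\run(v')=\lambda_2(v')$. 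Everything therefore reduces to showing $\lambda_1(v')=\lambda_1(v)+1$: then $\run(v)\le\run(v')+1=\lambda_2(v')+1=\lambda_2(v)$, completing the induction.

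Establishing $\lambda_1(v')=\lambda_1(v)+1$ is the crux, and I expect it to be the main obstacle. Because $1$ is the smallest value, moving it to the front raises $\lambda_1$ precisely when $v$ has a longest increasing subsequence not using the value $1$ (and by Lemma~\ref{lem:run effect on top row} the increase can be at most $1$); so the plan is to prove that a connected boolean permutation which is not a run has a longest increasing subsequence avoiding the value $1$. Suppose, for contradiction, that every longest increasing subsequence uses the value $1$. The $321$-avoidance of $v$ forces the entries in positions $1,\dots,p-1$ to be increasing and those in positions $p+1,\dots,n$ to be increasing; since the tail block is itself increasing, $\lambda_1(v)=1+(n-p)$. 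Comparing this with the increasing subsequence formed by an initial segment $v(1)<\cdots<v(r)$ followed by all tail entries exceeding $v(r)$ --- a subsequence that avoids the value $1$ --- yields $v(r)\ge 2r+1$ for each $r\in[1,p-1]$. If $p\ge 3$, the $3412$-avoidance of $v$, applied to the positions $(p-2,\,p-1,\,p,\,\ell)$ with $\ell$ any tail position, forces every tail entry to be at least $v(p-2)\ge 2p-3$; counting how few values can then lie below $2p-3$ gives $p\le 2$, a contradiction. If $p=2$, the resulting shape $v=[\,v(1),\,1,\,(\text{increasing tail})\,]$ together with $\supp(v)=[1,n-1]$ forces $v=[\,n,1,2,\dots,n-1\,]$, a single run, again a contradiction. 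This proves the claim, hence the theorem; the first two paragraphs are routine bookkeeping, while choosing the right slide and running this last case analysis is where the work lies.
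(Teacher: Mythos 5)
Your overall strategy (use Corollary~\ref{cor-wm-72} for one inequality, then peel a single run off $v$ and induct, reducing everything to a statement about how the longest increasing subsequence changes) is the same general shape as the paper's argument, but your key lemma is false, and this breaks the induction. You claim that a boolean permutation $v$ with $\supp(v)=[1,n-1]$ that is not a single run always has a longest increasing subsequence avoiding the value $1$. Take $v=\rw{3214}\in\mf{S}_5$, with one-line notation $41253$. This $v$ is boolean, has full support, and is not a run (indeed $\run(v)=2$, via $\rw{3421}=\rw{34}\cdot\rw{21}$), but $\lambda_1(v)=3$ and every increasing subsequence of length $3$ (such as $1,2,5$ or $1,2,3$) uses the value $1$: deleting the value $1$ leaves the word $4,2,5,3$, whose longest increasing subsequence has length $2$. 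Consequently, sliding $1$ to the front gives $v'=14253$ with $\lambda_1(v')=\lambda_1(v)$, not $\lambda_1(v)+1$, so your chain $\run(v)\le\run(v')+1=\lambda_2(v')+1=\lambda_2(v)$ collapses (it only yields $\run(v)\le 3$, while $\lambda_2(v)=2$). The specific step in your case analysis that fails is the assertion that $321$-avoidance forces the entries in positions $p+1,\dots,n$ to be increasing: this is false in general (e.g.\ $24153$), and it is also false under your contradiction hypothesis, as $41253$ shows (tail $2,5,3$ after $p=2$); the identity $\lambda_1(v)=1+(n-p)$, and everything after it, depends on that false claim.

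The diagnosis is that the run you peel off is dictated purely by the position of the value $1$ in the one-line notation, and that is not always the right run. The paper's proof chooses the run from the reduced-word structure instead: after replacing $v$ by $v^{-1}$ if necessary (both $\lambda_2$ and $\run$ are invariant under inversion), one may assume $2$ precedes $1$ in every reduced word, takes $k\ge 2$ maximal with $i+1$ preceding $i$ for all $i<k$, and factors $v=\rw{k(k-1)\cdots 21}\,v'$ with $\supp(v')=[k+1,n-1]$; the condition $k\ge 2$ is exactly what guarantees the strict change $\lambda_2(v)=\lambda_2(v')+1$. In your counterexample this corresponds to working with $v^{-1}=23514$ (or peeling $\rw{321}$ on the left), where the argument does go through. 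So a repair would need at least the ``up to inverse'' reduction together with a choice of run adapted to the reduced word, not to the location of the value $1$; as written, the crux of your proof does not hold.
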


\begin{proof}
It is enough to prove the claim under the assumption that 
$\supp(v)$ contains all simple reflections, for otherwise 
we can write $v$ as a product of  two shorter commuting boolean
elements and use induction.

We induct on the length of $v$.
If $v$ contains just one simple reflection, the claim is obvious.
If $\ell(v) > 1$ then, up to taking the 
inverse of $v$, we may assume that $2$ appears to the left of 
$1$ in any reduced word for $v$. Let $k\geq 2$
be maximal with the property that $i+1$ appears to the left of
$i$ in any reduced word for $v$, for all $i<k$. Then 
$v=\rw{k(k-1)\cdots 21}v'$, where $\supp(v')=[k+1,n-1]$
and,  clearly, $\run(v)\leq \run(v')+1$.

The permutation $v'$ fixes all $i \le k$, and hence $12\cdots k$ belongs to any 
longest increasing subsequence in the one-line notation of $v'$. 
Multiplying $v'$ by $\rw{k(k-1)\dots 21}$ moves $1$ rightward past $2,3,\dots,k,v'(k+1)$ in the one-line notation. 
Since $1$ is the smallest element, this operation can only keep or reduce the length of an
increasing subsequence, compared to what had appeared in $v'$. In fact, because $k\geq 2$, this
produces the inversion $2>1$ in $v$ and hence necessarily makes 
all longest increasing subsequences in $v$ strictly shorter than what had been in $v'$. 
Specifically, removing the $1$ in a longest increasing subsequence for 
$v'$ produces an increasing subsequence for $v$ which is shorter by exactly one term.
It follows that $\row_2(v)=\row_2(v')+1$. Combining this with
Corollary~\ref{cor-wm-72} and $\run(v)\leq \run(v')+1$, by 
induction we have $\run(v)=\run(v')+1$. This implies $\row_2(v) = \run(v)$,  proving the claim.\end{proof}

\subsection{Lusztig's $\mathbf{a}$-function for the symmetric group}\label{sec:final.1}

In \cite{lusztig}, Lusztig introduced the function $\mathbf{a}:W\to\mathbb{Z}_{\geq 0}$,
where $W$ is a Coxeter group, with the following properties:
\begin{itemize}
\item $\mathbf{a}$ is constant on two-sided Kazhdan-Lusztig cells in $W$;
\item $\mathbf{a}(w)\leq \ell(w)$, for all $w\in W$;
\item $\mathbf{a}(w)=\ell(w)$ if $w$ is the longest element of some
parabolic subgroup of $W$.
\end{itemize}
In the special case of a symmetric groups, it is well-known, see \cite{KL},
that two permutations $v$ and $w$ belong to the same two-sided Kazhdan-Lusztig
cell if and only if $\mathsf{sh}(v)=\mathsf{sh}(w)$.

Given a partition $\lambda\vdash n$, consider the transposed partition
$\mu:=\lambda^t=(\mu_1,\mu_2,\dots,\mu_m)$. Consider the parabolic subgroup 
$W_\mu$ of $\mf{S}_n$ given by $\mf{S}_{\mu_1}\times \mf{S}_{\mu_2}\times \cdots
\times \mf{S}_{\mu_m}$.
Then it is easy to check that the Robinson-Schensted correspondent of
the longest element in $W_\mu$ has shape $\lambda$. In particular, 
each two-sided Kazhdan-Lusztig cell contains the longest element of some parabolic 
subgroup. Therefore the properties of  $\mathbf{a}$ listed above 
determine the function $\mathbf{a}$ for $\mf{S}_n$ uniquely.

\begin{lemma}\label{lem-wm-51}
Let $w\in \mf{S}_n$ be such that $\mathsf{sh}(w)=\lambda$. 
Then, for $\mu=\lambda^t$, we have
$$\displaystyle\mathbf{a}(w)=\sum_{i=1}^m\frac{\mu_i(\mu_i-1)}{2}.$$ 
\end{lemma}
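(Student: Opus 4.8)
The statement is really a statement about the longest element of the parabolic subgroup $W_\mu = \mf{S}_{\mu_1} \times \cdots \times \mf{S}_{\mu_m}$: since $\mathbf{a}$ is constant on two-sided Kazhdan--Lusztig cells, and every cell in $\mf{S}_n$ contains the longest element of a parabolic (as noted just before the lemma), it suffices to evaluate $\mathbf{a}$ on $w_\mu := w_0(W_\mu)$. The plan is therefore: (1) reduce to computing $\mathbf{a}(w_\mu)$; (2) use the third defining property of $\mathbf{a}$, namely that $\mathbf{a}(u) = \ell(u)$ whenever $u$ is the longest element of a parabolic subgroup, to get $\mathbf{a}(w_\mu) = \ell(w_\mu)$; and (3) compute $\ell(w_\mu)$ combinatorially.

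For step (1), I would invoke the fact recorded in the excerpt that $v$ and $w$ lie in the same two-sided cell iff $\mathsf{sh}(v) = \mathsf{sh}(w)$, together with the observation (also stated in the excerpt) that the Robinson--Schensted correspondent of $w_\mu$ has shape $\lambda$. Hence $w$ and $w_\mu$ are in the same cell, so $\mathbf{a}(w) = \mathbf{a}(w_\mu)$. For step (2), $W_\mu$ is genuinely a parabolic subgroup of $\mf{S}_n$ (a Young/standard parabolic), so $w_\mu$ is the longest element of a parabolic subgroup and the third bullet gives $\mathbf{a}(w_\mu) = \ell(w_\mu)$.

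For step (3), the length of the longest element of $\mf{S}_k$ is the number of inversions of the reverse permutation, which is $\binom{k}{2} = k(k-1)/2$. Since $W_\mu$ is a direct product $\mf{S}_{\mu_1} \times \cdots \times \mf{S}_{\mu_m}$ and length is additive across the factors (each factor acts on a disjoint block of positions, so inversions do not interact), we get
$$
\ell(w_\mu) = \sum_{i=1}^m \ell\big(w_0(\mf{S}_{\mu_i})\big) = \sum_{i=1}^m \binom{\mu_i}{2} = \sum_{i=1}^m \frac{\mu_i(\mu_i-1)}{2},
$$
which is exactly the claimed formula.

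There is no serious obstacle here; the lemma is essentially a bookkeeping consequence of the characterization of cells in type $A$ and the normalization of $\mathbf{a}$. The only point requiring a word of care is the claim that $\mathbf{RS}(w_\mu)$ has shape $\lambda$ (equivalently, that the longest element of $W_{\lambda^t}$ inserts to shape $\lambda$): this follows because $w_\mu$ is a union of decreasing blocks of sizes $\mu_1, \ldots, \mu_m$, so by Greene's theorem (or directly, by Schensted's theorem for the first row/column together with the block structure) the longest increasing subsequence has length $m$ (one entry per block) and, more precisely, the shape obtained is the one whose columns have heights $\mu_1, \ldots, \mu_m$, i.e. $\lambda$. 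I would cite \cite{schensted,sagan} for this. Everything else is immediate.
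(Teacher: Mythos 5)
Your proposal is correct and follows essentially the same route as the paper: reduce to the longest element $w_\mu$ of the parabolic $W_\mu$ via constancy of $\mathbf{a}$ on cells and the shape characterization of cells, use $\mathbf{a}(w_\mu)=\ell(w_\mu)$, and compute $\ell(w_\mu)=\sum_i \binom{\mu_i}{2}$. The only difference is that you spell out (via Greene's/Schensted's theorem) the claim that $\mathbf{RS}(w_\mu)$ has shape $\lambda$, which the paper dispatches with ``it is easy to check'' in the paragraph preceding the lemma.
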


\begin{proof}
This follows directly from the above and the
fact that the length of the longest element in $\mf{S}_k$
equals $\frac{k(k-1)}{2}$, for any $k$.
\end{proof}

\begin{corollary}\label{lem-wm-52}
If $v\in \mf{S}_n$ is boolean and $\mathsf{sh}(v)=\lambda$, 
then $\mathbf{a}(v)=\lambda_2(v)$.
\end{corollary}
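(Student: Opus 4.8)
The plan is to show that $\mathbf{a}(v)$, which depends only on $\mathsf{sh}(v)=\lambda$, equals $\lambda_2(v)$ by combining Lemma~\ref{lem-wm-51} with the structural constraint that boolean permutations carry a two-row shape. First I would invoke Corollary~\ref{cor:boolean shape has 2 rows}(b): since $v$ is boolean it avoids $321$, so $\lambda=\mathsf{sh}(v)$ has at most two rows, say $\lambda=(\lambda_1,\lambda_2)$ with $\lambda_1\geq\lambda_2\geq 0$ and $\lambda_1+\lambda_2=n$.

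Next I would compute the transpose $\mu=\lambda^t$ explicitly for a two-row shape. If $\lambda=(\lambda_1,\lambda_2)$, then $\mu$ has $\lambda_1$ parts, of which the first $\lambda_2$ equal $2$ and the remaining $\lambda_1-\lambda_2$ equal $1$. Feeding this into Lemma~\ref{lem-wm-51} gives
$$\mathbf{a}(v)=\sum_{i=1}^{\lambda_1}\frac{\mu_i(\mu_i-1)}{2}=\lambda_2\cdot\frac{2\cdot 1}{2}+(\lambda_1-\lambda_2)\cdot\frac{1\cdot 0}{2}=\lambda_2=\lambda_2(v),$$
which is exactly the claim.

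There is essentially no obstacle here: the only nontrivial input is the two-row restriction from Corollary~\ref{cor:boolean shape has 2 rows}, and everything else is the arithmetic of transposing a two-row partition together with the closed formula already established in Lemma~\ref{lem-wm-51}. (Note the degenerate cases $\lambda_2=0$, i.e.\ $v=e$, and $\lambda_1=\lambda_2$ are both covered by the same computation.) So the proof is short, and the real content of the section lies upstream—in Theorem~\ref{thm:2nd row counts runs}, which identifies $\lambda_2(v)$ with $\run(v)$, and in Theorem~\ref{thm:optimal rank and partner for boolean}, which identifies $\run(v)$ with $\ell(v)-\optimalrank(v)$; this corollary is just the last link tying $\optimalrank(v)$ to Lusztig's $\mathbf{a}$-function.
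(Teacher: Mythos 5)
Your proposal is correct and follows the paper's own argument exactly: boolean implies at most two rows by Corollary~\ref{cor:boolean shape has 2 rows}, so $\mu=\lambda^t=(2^{\lambda_2},1^{n-2\lambda_2})$, and Lemma~\ref{lem-wm-51} then gives $\mathbf{a}(v)=\lambda_2(v)$ by the same arithmetic. Nothing further is needed.
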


\begin{proof}
If $v$ is boolean, $\lambda$ has at most two rows.
Therefore  $\mu=\lambda^t=(2^{\lambda_2},1^{n-2\lambda_2})$.
Now the claim follows directly from Lemma~\ref{lem-wm-51}.
\end{proof}

\subsection{Grades of simple modules for boolean elements via Lusztig's $\mathbf{a}$-function}
\label{sec:final.3}

We can now prove our main result.

\begin{theorem}\label{thmmain}
Let $v\in\mf{S}_n$ be boolean. Then $\mathbf{grade}(L_v)=\mathbf{a}(v)$.
\end{theorem}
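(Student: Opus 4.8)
The plan is to assemble the pieces that have been developed throughout the paper. The key chain of identifications is: $\mathbf{grade}(L_v)$ equals the optimal rank $\optimalrank(v)$, which by Theorem~\ref{thm:optimal rank and partner for boolean} equals $\ell(v) - \run(v)$, which by Theorem~\ref{thm:2nd row counts runs} equals $\ell(v) - \lambda_2(v)$, and finally by Corollary~\ref{lem-wm-52} one has $\mathbf{a}(v) = \lambda_2(v)$. So the whole argument will come down to verifying the very first link, namely that $\mathbf{grade}(L_v) = \ell(v) - \optimalrank(v)$, and combining it with $\mathbf{a}(v)=\lambda_2(v)$; note the appearance of $\ell(v)$ in two places that must cancel, so I should be careful that the bookkeeping is consistent.

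First I would recall the reduction carried out in Section~\ref{sec:motivation}. By Lemma~\ref{lem.wm-1}, $\mathbf{grade}(L_v)$ is the minimal $i$ such that the $(-i)$-th homology of $\mathbb{S}L_v$ is nonzero, and Proposition~\ref{propwm-2} together with the discussion around \eqref{eq-wm-5} shows that, after applying $\mathbb{S}$ and restricting to a vertex $u$, we obtain exactly the BGG complex restricted to $B(v)\cap B(u)$, with the vertex $v$ sitting in homological position $0$ and the identity $e$ in position $-\ell(v)$. Hence $\mathbf{grade}(L_v)$ is $\ell(v)$ minus the largest rank at which any restricted complex $V^{B(v)\cap B(u)}_\bullet$ has nonzero homology, as $u$ ranges over $\mf{S}_n$. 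By Proposition~\ref{prop:bounding optimal rank} (and the matching lemmas~\ref{lem-wm-31} and~\ref{lem-wm-32}), every such complex is either exact or has a single one-dimensional homology group concentrated at the rank of the unmatched singleton of an almost perfect matching; the largest such rank, over all $u$, is by definition $\optimalrank(v)$. Therefore $\mathbf{grade}(L_v) = \ell(v) - \optimalrank(v)$.

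Then I would simply plug in: by Theorem~\ref{thm:optimal rank and partner for boolean}, $\optimalrank(v) = \ell(v) - \run(v)$, so $\mathbf{grade}(L_v) = \run(v)$; by Theorem~\ref{thm:2nd row counts runs}, $\run(v) = \lambda_2(v)$; and by Corollary~\ref{lem-wm-52}, $\lambda_2(v) = \mathbf{a}(v)$. Chaining these equalities gives $\mathbf{grade}(L_v) = \mathbf{a}(v)$, as desired.

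The main obstacle is not any new computation but making sure the homological indexing is airtight: one must confirm that the BGG differential's sign/coefficient structure genuinely yields the matching-to-homology dictionary of Lemmas~\ref{lem-wm-31} and~\ref{lem-wm-32} on an arbitrary convex set $B(v)\cap B(u)$ (in particular that $B(v)\cap B(u)$ is convex, which follows from Lemma~\ref{lem:intersection atoms and maxes}(c) since it is even an order ideal), and that every $B(v)\cap B(u)$ admits a perfect or almost perfect matching so that no homology can hide at an unexpected degree — this last point being exactly the content of Proposition~\ref{prop:bounding optimal rank}. Once that is granted, the proof is a one-line concatenation of the cited results.
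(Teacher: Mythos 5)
Your proposal is correct and follows essentially the same route as the paper: the published proof is exactly the one-line concatenation of Theorem~\ref{thm:optimal rank and partner for boolean}, Theorem~\ref{thm:2nd row counts runs}, and Corollary~\ref{lem-wm-52}, with the reduction $\mathbf{grade}(L_v)=\ell(v)-\optimalrank(v)$ left to the discussion in Section~\ref{sec:motivation} that you spell out. Your explicit bookkeeping of that reduction (via Lemma~\ref{lem.wm-1}, Proposition~\ref{propwm-2}, the restricted complex \eqref{eq-wm-5}, and the matchings of Lemmas~\ref{lem-wm-31}, \ref{lem-wm-32} and Proposition~\ref{prop:bounding optimal rank}) matches the paper's intended argument.
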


\begin{proof}
After the discussion in Section~\ref{sec:motivation}, the claim 
follows  by combining Theorems~\ref{thm:optimal rank and partner for boolean} and \ref{thm:2nd row counts runs}
with Corollary~\ref{lem-wm-52}.
\end{proof}

We note that Lusztig's $\mathbf{a}$-function describes various homological
invariants in BGG category $\mathcal{O}$, see \cite{M1,M2,KMM}.

\section{Grades of simple modules for non-boolean elements}\label{sec:nonboolean}

\subsection{Longest elements in parabolic subgroups}\label{sec:nonboolean.1}

We conclude by remarking upon how this work does and does not extend to non-boolean elements. 
Rene Marczinzik has computed the grades of all simple modules 
for $\mf{S}_4$ (over $\mathbb{C}$) using a computer. In that case it turns out that 
$\mathbf{grade}(L_w)\neq \mathbf{a}(w)$, for the (non-boolean) permutations $w=\rw{2132}$ and $w=\rw{12321}$.
This means that Theorem~\ref{thmmain} does not generalize to all elements of $W$. It does, however, hold true in another special case, in some sense, the ``opposite extreme'' of the boolean elements.

\begin{theorem}\label{thm-main-2}
Let $v\in\mf{S}_n$ be the longest element of some parabolic subgroup. 
Then $\mathbf{grade}(L_v)=\mathbf{a}(v)=\ell(v)$. 
\end{theorem}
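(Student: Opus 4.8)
The plan is to exploit the same machinery as in the boolean case: by Lemma~\ref{lem.wm-1} and the discussion in Section~\ref{sec:motivation.4}, $\mathbf{grade}(L_v)$ equals the minimum, over all $u \in \mf{S}_n$, of the absolute value of the rightmost homological degree in which the BGG complex restricted to $B(v) \cap B(u)$ has nonzero homology. Since $v = w_0(J)$ is the longest element of the parabolic subgroup $W_J$ for some $J \subseteq S$, every simple reflection $s \in J$ satisfies $sv < v$ and $vs < v$. Thus the genuinely ``interesting'' case described in Section~\ref{sec:motivation.4} (where $u$ and $w$ share no left descents and no right descents) forces the descent sets of $u$ on $J$ to be empty: if $u$ shared any $s \in J$ as a left or right descent with $v$, then by Axel Hultman's observation the restricted complex $V_\bullet^{B(v)\cap B(u)}$ is exact, contributing nothing. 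So I only need to consider $u$ with $u$ having no descent in $J$ on either side, together with the already-handled comparable cases and the $u = e$ case, which by itself only gives the trivial bound $\ell(v)$.

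First I would record that $\mathbf{a}(v) = \ell(v)$ for $v = w_0(J)$: this is immediate from the listed properties of Lusztig's $\mathbf{a}$-function, or alternatively from Lemma~\ref{lem-wm-51} applied to the shape of $w_0(J)$ under Robinson--Schensted. Since $\mathbf{grade}(L_v) \le \mathrm{proj.dim}(L_v) \le \ell(v)$ always holds, it remains only to prove the lower bound $\mathbf{grade}(L_v) \ge \ell(v)$; equivalently, that the restricted complex $V_\bullet^{B(v)\cap B(u)}$ is exact in all homological positions strictly above $-\ell(v)$ for every $u$, and the only possible nonvanishing homology in the extreme degree $-\ell(v)$ is realized solely by $u=e$ (or more precisely, that no $u$ produces homology in a degree closer to $0$ than $-\ell(v)$). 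The cleanest route: show directly that for any $u$ which is not comparable to $v$ and shares no one-sided descent with $v$ in $J$, the complex $V_\bullet^{B(v)\cap B(u)}$ is exact. The key structural input is that $B(v) = W_J$ as a poset (the principal ideal of $w_0(J)$ is exactly the parabolic subgroup), so $B(v) \cap B(u) = W_J \cap B(u)$, and I would analyze this intersection via the parabolic decomposition of $B(u)$, building a perfect matching of $W_J \cap B(u)$ using a simple reflection $s \in J$ that is \emph{not} in the left (or right) descent set of the relevant elements — the point being that every element of $W_J \smallsetminus \{$shortest coset reps$\}$ can be matched by left multiplication by some $s \in J$, and the hypothesis on $u$'s descents is exactly what prevents the matching from breaking.

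The main obstacle I anticipate is verifying that the matching on $W_J \cap B(u)$ is actually \emph{perfect} (no leftover singleton), not merely almost perfect. In the boolean argument the singleton appeared precisely because a full run could be forced out of $B(v) \cap B(w)$; here I expect the rigidity of $v$ being a parabolic longest element — so that $B(v)$ is closed under left and right multiplication by every $s \in J$ — to guarantee that whenever an element $x \in W_J \cap B(u)$ has some $s \in J$ with $sx \notin B(u)$, one can instead pair it off using a different reflection or descend to a sub-parabolic and induct. Concretely, I would induct on $|J|$: pick $s \in J$, split $W_J$ into cosets $W_{J\smallsetminus\{s\}}$-cosets or use the $\langle s\rangle$-coset structure as in Lemma~\ref{lem:runs}, match within cosets when possible, and show the unmatched residue is isomorphic to $W_{J'} \cap B(u')$ for a strictly smaller parabolic, where the inductive hypothesis forces exactness (since $v' = w_0(J')$ is still a parabolic longest element). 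If, instead, one only obtains an almost perfect matching in some case, the unmatched singleton would have to have rank exactly $\ell(v)$, forcing $x = v \in B(u)$, contradicting incomparability — so exactness is the only option. Assembling these cases gives $\mathbf{grade}(L_v) = \ell(v) = \mathbf{a}(v)$.
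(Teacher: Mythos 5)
Your frame (the reduction via Lemma~\ref{lem.wm-1} to the complexes on $B(v)\cap B(u)$, Hultman's observation for shared descents, $\mathbf{a}(v)=\ell(v)$, and the upper bound from projective dimension) is fine, but the heart of the lower bound has a genuine gap. The case you isolate as the one needing work --- $u$ incomparable to $v$ with no left or right descent in $J$ --- is precisely the case where your matching strategy has nothing to stand on. A pairing $x \leftrightarrow \s_j x$ stays inside $B(u)$ because of the lifting property, which requires $\s_j$ to be a descent of $u$; your hypothesis says no $\s_j$ with $j\in J$ is a descent of $u$, so left multiplication by such $\s_j$ need not preserve $B(u)$, and your remark that the hypothesis on $u$'s descents ``is exactly what prevents the matching from breaking'' has the logic inverted. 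The repair steps are not supplied: no construction of the ``different reflection,'' no identification of the unmatched residue with some $W_{J'}\cap B(u')$, and the fallback claim that a leftover singleton ``would have to have rank exactly $\ell(v)$'' is unsupported. In fact the statement you propose to prove is false as stated: if $u$ is a simple reflection outside $J$ (more generally, if $\supp(u)\cap J=\emptyset$), then $u$ is incomparable to $v$ and shares no descents with it, yet $B(v)\cap B(u)=\{e\}$ and the restricted complex is \emph{not} exact --- it has its one-dimensional homology in degree $-\ell(v)$. The correct dichotomy, namely that for every $u$ either $B(v)\cap B(u)=\{e\}$ or the complex is exact, is exactly what still needs an argument; without it, nothing rules out a singleton of intermediate rank, which by Lemma~\ref{lem-wm-32} would produce homology strictly closer to $0$ than $-\ell(v)$ and destroy the bound.

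The missing idea is a reduction that \emph{manufactures} a shared descent instead of assuming one. Given any $u$ and $\rw{s}\in R(u)$, let $\rw{x}$ be the shortest prefix of $\rw{s}$ such that no letter of the remaining suffix lies in $\supp(v)$. Since $B(v)$ is the parabolic subgroup generated by $\supp(v)$, every element of $B(v)$ has support inside $\supp(v)$, and Theorem~\ref{thm:subword property} gives $B(v)\cap B(u)=B(v)\cap B(\rw{x})$. If $\rw{x}=e$, then the intersection is $\{e\}$ and the complex is concentrated in degree $-\ell(v)$, which does not lower the grade. If $\rw{x}\neq e$, then by minimality the last letter of $x$ lies in $\supp(v)$, so $\rw{x}$ has a right descent in $\supp(v)$; because $v$ is the longest element of its parabolic subgroup, its right descent set equals its support, so $v$ and $\rw{x}$ share a right descent, and the exactness argument of Subsection~\ref{sec:motivation.7} applies to the pair $(v,\rw{x})$ --- even though it need not apply to $(v,u)$. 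This sidesteps entirely the need to build matchings in the no-shared-descent case, which your sketch does not actually accomplish.
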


\begin{proof}
If $v=e$, then the claim is obvious. Therefore we assume $v\neq e$.
Let $w\in \mf{S}_n$. Consider some reduced word $\rw{s} \in R(w)$.
Let $\rw{x}$ be the shortest prefix of $\rw{s}$ with the property that 
no simple reflection in the suffix of $\rw{s}$, defined as the complement 
to $\rw{x}$, belongs to the support of $v$.  From the subword property
(see Theorem~\ref{thm:subword property}), it follows
that $B(v)\cap B(\rw{x})=B(v)\cap B(w)$. We have to consider two cases.

{\bf Case~1: $\rw{x}=e$.} In this case $B(v)\cap B(\rw{x})=\{e\}$ and 
hence the corresponding complex \eqref{eq-wm-5}  
is concentrated in one degree, namely, in degree $-\ell(v)$.

{\bf Case~2: $\rw{x}\neq e$.} In this case, due to the minimality of $\rw{x}$,
the rightmost letter of $x$ belongs to the support of $v$. Therefore the
right descent set of $\rw{x}$ contains a simple reflection that belongs to
the support of $v$. Since $v$ is the longest element in some parabolic
subgroup, its support coincides with both its left descent set and
its right descent set. In particular, $\rw{x}$ and $v$ have a common 
simple reflection in the right descent set. As explained in 
Subsection~\ref{sec:motivation.7}, this implies exactness of \eqref{eq-wm-5}.
Consequently, this case does not effect the computation of $\mathbf{grade}(L_v)$.

It follows that $\mathbf{grade}(L_v)=\ell(v)$ and the claim of the
theorem now follows from the property $\mathbf{a}(v)=\ell(v)$,
for $v$ the longest element of a parabolic subgroup.
\end{proof}

\begin{remark}
One could observe that the permutations $w=\rw{2132}$ and $w=\rw{12321}$ are exactly 
the two elements of $\mf{S}_4$ for which the corresponding Kazhdan-Lusztig polynomial 
$P_{e,w}$ is non-trivial. By a result of Deodhar, see \cite{De},  this 
condition is equivalent to nonsingularity of the Schubert variety for $w$.

Unfortunately, at the present stage we do not know whether 
it is reasonable to extrapolate this observation to a guess for higher ranks.
In order to investigate this kind of guess, we need a better understanding of 
the combinatorial structure of intersections of principal Bruhat ideals.
Maybe the recent preprints \cite{BBDVW,tenner bruhat intersections}, which appeared after the
preprint version of the present paper, will be helpful.
\end{remark}

\subsection{Classification of perfect simple module}\label{sec:nonboolean.2}

Recall that a module is called {\em perfect} if its grade coincides with
its projective dimension. Theorem~\ref{thm-main-2} leads to the following
classification of perfect simple modules.

\begin{theorem}\label{thm-main-3}
For  $w\in\mf{S}_n$, the module $L_w$ is perfect if and only if $w$ is the
longest element in some parabolic sugroup of $\mf{S}_n$.
\end{theorem}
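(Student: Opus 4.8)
The plan is to prove each implication by comparing $\mathbf{grade}(L_w)$ with $\mathrm{proj.dim}(L_w)$, using the projective resolution of Proposition~\ref{propwm-2}. First I would compute $\mathrm{proj.dim}(L_w)$ in general: from the BGG-based resolution~\eqref{eq-wm-3}, the resolution terminates exactly when $Q(w,i)=0$, i.e. at $i=\ell(w)$, since $e\leq w$ contributes a summand $P_e$ in homological degree $\ell(w)$, and this summand cannot cancel because $Q(w,\ell(w))=P_e$ is a single indecomposable mapping into $Q(w,\ell(w)-1)$ (the map is nonzero by the properties of the BGG differential, so the resolution is not split there). Hence $\mathrm{proj.dim}(L_w)=\ell(w)$ for every $w\in\mf{S}_n$. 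Therefore $L_w$ is perfect precisely when $\mathbf{grade}(L_w)=\ell(w)$.

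The ``if'' direction is then immediate from Theorem~\ref{thm-main-2}: if $w$ is the longest element of a parabolic subgroup, then $\mathbf{grade}(L_w)=\ell(w)=\mathrm{proj.dim}(L_w)$, so $L_w$ is perfect.

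For the ``only if'' direction, suppose $w$ is \emph{not} the longest element of any parabolic subgroup; I must show $\mathbf{grade}(L_w)<\ell(w)$, i.e. exhibit a vertex $u$ for which the restricted complex~\eqref{eq-wm-5} has nonzero homology in a degree strictly above $-\ell(w)$. By the discussion in Subsection~\ref{sec:motivation.4}, it suffices to find $u$ not comparable to $w$, sharing no left descent and no right descent with $w$, such that $B(w)\cap B(u)$ has an almost perfect matching whose singleton has positive rank (or, more weakly, such that the restricted BGG complex has nonzero homology away from the bottom). The key structural input is: $w$ fails to be the longest element of a parabolic subgroup iff its support $\supp(w)$ strictly contains its right descent set $D_R(w)$ or its left descent set $D_L(w)$ — equivalently, there is a simple reflection $s\in\supp(w)$ with $ws>w$ (or $sw>w$). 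Equivalently again, some reduced word for $w$ has a letter $k$ occurring with at least one occurrence not rightmost (interlaced or "increasing/decreasing but non-maximal" behaviour of $k$ relative to its neighbours). I would then construct $u$ explicitly: take $s\in\supp(w)$ with $ws>w$, and let $u:=s$ (the simple reflection itself), or more generally a short bigrassmannian element supported inside $\supp(w)$ chosen so that $B(w)\cap B(u)$ is a nontrivial proper ideal with the required descent-disjointness. Since $u=s$ has $\ell(u)=1$ and $B(w)\cap B(s)=\{e,s\}$ whenever $s\in\supp(w)$, one checks that $s$ is a left (and right) descent of $s$; so $u=s$ only works when $s\notin D_L(w)\cup D_R(w)$, which is exactly what the hypothesis $ws>w$, $sw>w$ would give — and one can always arrange at least one side, then argue the complex is still non-exact on the other. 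The homology computation for such a small intersection is direct.

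The main obstacle I anticipate is the last step: precisely matching the combinatorial condition ``$w$ is not the longest element of a parabolic subgroup'' with the existence of a \emph{good partner} $u$ — one simultaneously incomparable to $w$, descent-disjoint from $w$ on both sides, and yielding a restricted BGG complex with homology above the bottom degree. The cleanest route is probably to reduce, via Lemma~\ref{lem:commuting generators make product poi}-style parabolic factorization, to the case $\supp(w)=[1,n-1]$ and $w\neq w_0$, then to analyze the local obstruction at a simple reflection $s$ with $ws>w$ and produce $u$ from a run construction analogous to Lemma~\ref{lem:runs}; the homological conclusion would then follow from Lemma~\ref{lem-wm-32}. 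I would also double-check the two exceptional non-boolean examples $\rw{2132}$ and $\rw{12321}$ in $\mf{S}_4$ against this construction as a sanity test, since they are exactly the elements where $\mathbf{grade}(L_w)\neq\mathbf{a}(w)$ yet are still not longest elements of parabolics, so they must be non-perfect via this argument.
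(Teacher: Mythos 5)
Your frame is right --- $\mathrm{proj.dim}(L_w)=\ell(w)$ by Proposition~\ref{propwm-2}, so perfection means $\mathbf{grade}(L_w)=\ell(w)$, and the ``if'' direction is exactly Theorem~\ref{thm-main-2} --- but the ``only if'' direction has a genuine gap: you never produce a working witness $u$. Your concrete suggestion $u:=s$ for a simple reflection $s\in\supp(w)$ cannot work, no matter how $s$ sits relative to the descent sets of $w$: since $s\in\supp(w)$ forces $s\leq w$, we get $B(w)\cap B(s)=B(s)=\{e,s\}$, which is a full principal ideal with a perfect matching, so the restricted complex \eqref{eq-wm-5} at $u=s$ is exact and contributes nothing toward showing $\mathbf{grade}(L_w)<\ell(w)$. (Taking $s\notin\supp(w)$ is no better: then the intersection is $\{e\}$ and the homology sits at $-\ell(w)$.) The further conditions you impose on $u$ (incomparability, descent-disjointness on both sides) are also not the right target --- they are sufficient conditions for the complex \emph{not} to be obviously exact, not a recipe for homology above the bottom degree --- and the remaining suggestions (``a short bigrassmannian element chosen so that\dots'', ``a run construction analogous to Lemma~\ref{lem:runs}'') are not carried out; you acknowledge this is the unresolved obstacle.

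The paper closes this gap with a different and quite short construction. Let $G$ be the parabolic subgroup generated by $\supp(w)$ (the minimal parabolic containing $w$); since $w$ is not the longest element of $G$, the set $G\setminus B(w)$ is nonempty, and one takes $u$ to be a \emph{minimal} element of it. Minimality gives $B(w)\cap B(u)=B(u)\setminus\{u\}$, and since every simple reflection of $G$ lies in $\supp(w)\subseteq B(w)$, necessarily $\ell(u)\geq 2$. Restricting a perfect matching of $B(u)$ to $B(u)\setminus\{u\}$ yields an almost perfect matching whose singleton has rank $\ell(u)-1\geq 1$, so by Lemma~\ref{lem-wm-32} the restricted complex has its unique nonzero homology in position $-(\ell(w)-(\ell(u)-1))$, giving $\mathbf{grade}(L_w)\leq \ell(w)-(\ell(u)-1)<\ell(w)$. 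The key idea you are missing is precisely this choice of $u$ just \emph{outside} $B(w)$ but inside the parabolic generated by $\supp(w)$, which makes the intersection a punctured principal ideal; no descent or comparability analysis is then needed.
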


\begin{proof}
By Proposition~\ref{propwm-2}, the projective dimension of $L_w$ is given by
$\ell(w)$. This means that the ``if'' part of the claim
follows directly from Theorem~\ref{thm-main-2}.

To prove the ``only if'' part, suppose that $w$ is not the longest element in any 
parabolic subgroup. Let $G$ be the minimal parabolic subgroup containing $w$.
Let $u \in G$ be minimal having the property that $u \not\in B(w)$.
Because $w$ is not the maximum element of $G$, such  $u$  exists.
Since $G$ is the minimal parabolic subgroup containing $w$, 
all simple reflections generating $G$ belong to the support of $w$,
in particular, they are all less than $w$ in the Bruhat order and hence cannot coincide
with $u$. This implies that $\ell(u) > 1$.

Consider $B(w) \cap B(u)$. By construction, we have $B(w) \cap B(u) = B(u) \setminus \{u\}$.
We know that there is a perfect matching, call it $F$, of the elements of $B(u)$. 
Restricting $F$ to $B(u)\setminus\{u\}$ gives an almost perfect matching whose only 
unmatched element is $u$'s partner under $F$. This unmatched element of $B(w) \cap B(u)$ has length $\ell(u) - 1$.
Therefore the grade of $w$ is at most $\ell(w) - (\ell(u) - 1) < \ell(w)$
which means that $L_w$ is not perfect.
\end{proof}

We note that both Theorem~\ref{thm-main-2} and \ref{thm-main-3} are true, 
with the same proofs, for arbitrary finite Weyl groups.

\section*{Acknowledgments}
We thank Rene Marczinzik both for his talk that started our interest in the problem discussed in the paper
and for very useful comments on the preliminary version. 
We also thank Axel Hultman for very helpful discussions. 
Finally, we are grateful for the careful reading and advice of the anonymous referees.

\end{document}